\documentclass[11pt,reqno]{amsart}
\usepackage[tt=false]{libertine}
\usepackage{amssymb}
\usepackage{mathtools}
\usepackage[varbb]{newpxmath}
\usepackage[foot]{amsaddr}

\let\savedbigtimes\bigtimes
\let\bigtimes\relax
\usepackage{mathabx}
\let\bigtimes\savedbigtimes

\usepackage[margin=1in, bottom=1in]{geometry}
\usepackage{graphicx}
\usepackage{caption}
\usepackage{subcaption}
\usepackage{enumitem}
\usepackage{comment}
\usepackage{bbm}

\usepackage[usenames,dvipsnames]{xcolor}
\usepackage[colorlinks=true,
  linkcolor=teal!60!black,
  citecolor=PineGreen,
  urlcolor=RedViolet]{hyperref}
\hypersetup{bookmarksopen=true}

\usepackage[yyyymmdd,hhmmss]{datetime}

\usepackage{tikz}
\usetikzlibrary{decorations.pathreplacing,calligraphy}
\usetikzlibrary{calc}

\newtheorem{thm}{Theorem}[section]
\newtheorem{lem}[thm]{Lemma}
\newtheorem{ppn}[thm]{Proposition}
\newtheorem{cor}[thm]{Corollary}
\newtheorem{fac}[thm]{Fact}
\newtheorem{conj}[thm]{Conjecture}

\theoremstyle{definition}
\newtheorem{dfn}[thm]{Definition}

\theoremstyle{remark}
\newtheorem{rmk}[thm]{Remark}

\numberwithin{equation}{section}
\def\beq#1\eeq{%
    \begin{equation}%
    #1%
    \end{equation}%
}
\def\baln#1\ealn{%
    \begin{align}%
    #1%
    \end{align}%
}
\def\balnn#1\ealnn{%
    \begin{align*}%
    #1%
    \end{align*}%
}

\def\lt{\left}
\def\rt{\right}
\def\fr{\frac}
\def\la{\langle}
\def\ra{\rangle}
\def\eps{\varepsilon}

\def\bA{{\boldsymbol{A}}}
\def\bB{{\boldsymbol{B}}}

\def\bD{{\boldsymbol{D}}}
\def\bE{{\boldsymbol{E}}}

\def\bI{{\boldsymbol{I}}}

\def\bM{{\boldsymbol{M}}}

\def\bQ{{\boldsymbol{Q}}}

\def\bS{{\boldsymbol{S}}}

\def\bW{{\boldsymbol{W}}}
\def\bX{{\boldsymbol{X}}}

\def\bDelta{{\boldsymbol{\Delta}}}
\def\bLambda{{\boldsymbol{\Lambda}}}
\def\bomega{{\boldsymbol{\omega}}}

\def\be{{\boldsymbol{e}}}

\def\bg{{\boldsymbol{g}}}

\def\bu{{\boldsymbol{u}}}
\def\bv{{\boldsymbol{v}}}

\def\bx{{\boldsymbol{x}}}
\def\by{{\boldsymbol{y}}}

\def\bone{{\boldsymbol{1}}}

\def\bbC{{\mathbb{C}}}

\def\bbE{{\mathbb{E}}}

\def\bbP{{\mathbb{P}}}

\def\bbR{{\mathbb{R}}}
\def\bbS{{\mathbb{S}}}

\def\cE{{\mathcal{E}}}

\def\cG{{\mathcal{G}}}

\def\cN{{\mathcal{N}}}

\def\cQ{{\mathcal{Q}}}

\def\de{{\mathsf{d}}}
\def\FE{{\mathsf{FE}}}
\def\GOE{{\mathsf{GOE}}}
\def\lb{{\mathsf{lb}}}
\def\polylog{{\mathsf{polylog}}}
\def\sph{{\mathsf{sph}}}
\def\Sym{{\mathsf{Sym}}}
\def\Tr{{\mathsf{Tr}}}
\def\TW{{\mathsf{TW}}}
\def\ub{{\mathsf{ub}}}
\def\unif{{\mathsf{unif}}}
\def\Var{{\mathsf{Var}}}

\def\tZ{{\widetilde Z}}

\def\opsi{{\bar \psi}}

\def\oB{{\bar{\operatorname{B}}}}

\title{Fluctuations of the Sherrington--Kirkpatrick free energy at critical temperature}

\author{Hang Du \and Brice Huang}

\address[H.~Du]{Department of Mathematics, Massachusetts Institute of Technology}
\email[H.~Du]{hangdu@mit.edu}

\address[B.~Huang]{Department of Statistics, Stanford University}
\email[B.~Huang]{bmhuang@stanford.edu}

\date{\today}

\subjclass[2020]{Primary 82B44; Secondary 60K35, 60F05, 82B27}

\begin{document}

\begin{abstract}
  We consider the Sherrington--Kirkpatrick spin glass model at the critical inverse temperature $\beta = 1$ with zero external field.
  We prove that the free energy $F_N = F_{N,\beta=1}$ of this model has variance
  \[
    \Var(F_N) = \fr16 \log N + O(1)\,,
  \]
  confirming a physics prediction of Aspelmeier \cite{aspelmeier2008free}, and that the centered and scaled $F_N$ satisfies a Gaussian CLT.
  We also identify the critical two-replica overlap scale, proving
  \[
    \bbE \la R_{1,2}^2\ra \asymp N^{-2/3}\,,
  \]
  as conjectured by Talagrand \cite{talagrand2011mean2}, together with a uniform exponential moment bound for $N^{1/3} |R_{1,2}|$.
  The key input is a critical reweighted moment method, in the spirit of the ``small subgraph conditioning'' technique from probabilistic combinatorics, but capable of capturing diverging fluctuations. Through this reweighting, we relate the critical SK model to the BBP critical edge, which determines the overlap and fluctuation scales.
\end{abstract}

\maketitle

\setcounter{tocdepth}{1}
\tableofcontents

\section{Introduction and main results}
\label{s:intro}

The Sherrington--Kirkpatrick (SK) model was introduced in \cite{sherrington1975solvable} to model a disordered magnetic alloy with ``glassy'' behavior.
Formally, it is defined through the following random Hamiltonian $H_N$.
For positive integer $N$, define the state space $\Sigma_N = \{\pm 1\}^N$.
Let $\bW \sim \GOE(N)$; that is, $\bW \in \bbR^{N\times N}$ is symmetric with independent entries on and above the diagonal, with distribution $W_{i,i} \sim \cN(0, 2/N)$ and $W_{i,j} \sim \cN(0, 1/N)$ for $i<j$.
Then the SK model's Hamiltonian $H_N : \Sigma_N \rightarrow \bbR$ is given by
\beq\label{e:H}
  H_N(\bx) = \fr12 (\bW \bx, \bx)\,.
\eeq
Equivalently, $H_N$ is the Gaussian process on $\Sigma_N$ with covariance
\[
  \bbE H_N(\bx)H_N(\by) = \fr{N}{2} R(\bx,\by)^2\,,
\]
where $R(\bx,\by) = (\bx,\by)/N$ denotes the overlap of $\bx,\by$.
At inverse temperature $\beta \ge 0$, the partition function $Z_{N,\beta}$ and free energy $F_{N,\beta}$ of this model are defined by
\baln\label{e:Z-and-F}
  Z_{N,\beta} &= \fr{1}{2^N} \sum_{\bx \in \Sigma_N} e^{\beta H_N(\bx)}\,, & 
  F_{N,\beta} &= \log Z_{N,\beta}\,.
\ealn
Since the introduction of this model, a central question has been to characterize the in-probability limit of the free energy density $F_{N,\beta} / N$ as $N\rightarrow\infty$.
This limiting value was first predicted in the groundbreaking work of Parisi \cite{parisi1979infinite,parisi1983order}, and proved by Talagrand \cite{talagrand2006parisi} and Panchenko \cite{panchenko2013parisi} following decades of progress in the physics and probability communities \cite{mezard1987spin,ruelle1987mathematical,ghirlanda1998general,aizenman2003extended,guerra2003broken}.

Another natural question is to understand the fluctuations of the random variable $F_{N,\beta}$.
This question has attracted significant attention, in part because concentration and fluctuations of $F_{N,\beta}$ are closely tied to overlap structure, disorder chaos, and the geometry of the Gibbs measure \cite{chatterjee2009disorder,talagrand2010mean,talagrand2011mean2,chatterjee2014superconcentration,chen2019order}, defined by
\beq\label{e:gibbs-msr}
  \mu_{N,\beta}(\bx) = \fr{e^{\beta H_N(\bx)}}{2^N Z_{N,\beta}}\,.
\eeq
In the high-temperature phase $\beta < 1$, \cite{aizenman1987some,comets1995sherrington} showed that $F_{N,\beta}$ has Gaussian fluctuations of order $1$.
More precisely, they show\footnote{In the setting of \cite{aizenman1987some} the diagonal entries of the disorder $\bW$ are set to zero, which results instead in the limit distribution $\cN(-\tau^2, 2\tau^2)$ for $\tau^2 = -\fr14 (\log(1-\beta^2) + \beta^2)$. Adapting their method to $\bW$ with diagonal entries gives the convergence in \eqref{e:ALR}. The main results in Theorems~\ref{t:main} and \ref{t:overlap} remain the same under either convention, as the diagonal entries of $\bW$ contribute only an independent additive $O(1)$ to the free energy and do not affect the Gibbs measure.}
\baln\label{e:ALR}
  F_{N,\beta} - \fr{N\beta^2}{4} &\stackrel{d}{\rightarrow} \cN(-\sigma^2, 2\sigma^2)\,, &
  \sigma^2 &= -\fr14 \log(1-\beta^2)\,.
\ealn
The SK model has a phase transition at $\beta = 1$, beyond which the variance of $F_{N,\beta}$ is expected to diverge.
In the low-temperature phase $\beta > 1$, determining the scale of the fluctuations of $F_{N,\beta}$ is a significant open problem.
The best upper bound known, due to Chatterjee \cite{chatterjee2009disorder}, states that $\Var(F_{N,\beta}) = O(N / \log N)$.

The fluctuations of $F_{N,\beta}$ at the critical temperature $\beta=1$ are of particular interest, as they shed light on the model's phase transition at criticality.
Using non-rigorous physics methods, Aspelmeier \cite{aspelmeier2008free} (see also \cite{parisi2009phase}) predicted the following variance of the free energy.
\begin{conj}[\cite{aspelmeier2008free}]
  We have $\Var(F_{N,\beta=1}) = \fr16 \log N + O(1)$.
\end{conj}
The question of understanding the SK model at criticality was studied in detail in Talagrand's books \cite[Chapter~2]{talagrand2010mean} and \cite[Chapter~11]{talagrand2011mean2}.
He showed that for $\la \cdot \ra_\beta$ denoting average with respect to Gibbs samples $\bx^1,\bx^2$ sampled from \eqref{e:gibbs-msr}, and $R_{1,2} = R(\bx^1,\bx^2)$,
\[
  \bbE \la R_{1,2}^2 \ra_{\beta=1} = O(N^{-1/2})\,.
\]
As explained in \cite{chen2019order}, this implies the upper bound $\Var(F_{N,\beta=1}) = O(\sqrt{N})$.
Chen and Lam showed the following sharper bound.
\begin{thm}[{\cite[Theorem~1]{chen2019order}}]\label{t:CL19}
  We have $\Var(F_{N,\beta=1}) = O(\log^2 N)$.
\end{thm}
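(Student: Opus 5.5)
The plan is to express the variance through a Gaussian interpolation between two correlated copies of the disorder, and then to control the resulting cross-overlap using the fact that imperfect correlation behaves like an effective temperature below critical.

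\textbf{Variance formula.} For $t\in[0,1]$, let $\bW^1,\bW^2$ be two $\GOE(N)$ matrices with $\bbE W^1_{ij}W^2_{ij} = t\,\bbE (W_{ij})^2$. Let $\la\cdot\ra_t$ denote the product Gibbs measure at $\beta=1$ with one replica drawn from the $\bW^1$-Gibbs measure and the other from the $\bW^2$-Gibbs measure. The standard semigroup (Ornstein--Uhlenbeck) interpolation, as in Chatterjee's superconcentration approach, gives
\[
  \Var(F_{N,1}) = \fr{N}{2}\int_0^1 \bbE \la R_{1,2}^2\ra_t \,dt + O(1)\,,
\]
where the $O(1)$ accounts for the diagonal terms. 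So it suffices to bound $\bbE\la R_{1,2}^2\ra_t$ as a function of $1-t$.

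\textbf{Two regimes.} For $t$ close to $1$, use the monotonicity in $t$ of $\bbE\la R_{1,2}^2\ra_t$; this holds because it is a nonnegative combination of squared Gibbs averages under the Hermite expansion. Combining this with Talagrand's bound $\bbE\la R_{1,2}^2\ra_{\beta=1}=O(N^{-1/2})$ at $t=1$ shows that the window $1-t\le N^{-1/2}$ contributes $N\cdot N^{-1/2}\cdot N^{-1/2}=O(1)$. For $1-t\ge N^{-1/2}$, the key estimate to prove is the disorder-chaos bound
\[
  \bbE\la R_{1,2}^2\ra_t \le \fr{C\log N}{N(1-t)}\,.
\]
Integrating it over $1-t\in[N^{-1/2},1]$ gives $\fr N2\cdot \fr{C\log N}{N}\cdot \log N = O(\log^2 N)$, which is the claimed bound.

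\textbf{Key step and main obstacle.} The chaos estimate is the main difficulty. Heuristically, the coupled two-replica system with correlation $t$ behaves, for the cross-overlap, like an SK model at effective inverse temperature $\sqrt t<1$. The high-temperature bound $\bbE\la R^2\ra\lesssim 1/(N(1-\beta^2))$ then suggests $1/(N(1-t))$. To make this rigorous, I would run Talagrand's cavity / smart-path argument for $\bbE\la R_{1,2}^2\ra_t$: remove one spin and derive a self-bounding inequality of the form
\[
  \bbE\la R_{1,2}^2\ra_t \le \fr1N + t\,\bbE\la R_{1,2}^2\ra_t + (\text{error})\,.
\]
The error involves higher overlap moments such as $\bbE\la R_{1,2}^4\ra_t$ and same-system overlaps. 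These can be controlled by the a priori criticality bound $\bbE\la R^4\ra_{\beta=1}\lesssim N^{-1}$ (in the spirit of Talagrand's Chapter 11). Alternatively, one can use an exponential-moment/concentration bound, losing the $\log N$ factor that accounts for $\log^2 N$ rather than $\log N$. Rearranging then yields $(1-t)\,\bbE\la R_{1,2}^2\ra_t\lesssim \log N/N$ whenever the error is dominated, which is exactly the range $1-t\ge N^{-1/2}$.
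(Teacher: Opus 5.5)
The paper does not prove this statement; it imports it from Chen--Lam and uses it only in Lemma~\ref{l:fe-lower-tail-crude}. Your skeleton is sound, and it is the same one the paper uses for its sharper bound in \S\ref{ss:variance}:
\begin{itemize}
\item Chatterjee's identity (Fact~\ref{f:chatterjee-var-id});
\item monotonicity in $t$ (Fact~\ref{f:correlated-overlap-incr}) together with an a priori overlap bound on a window near $t=1$;
\item a chaos bound on $(1-t)\bbE\la R_{1,2}^2\ra_t$ for the remaining $t$.
\end{itemize}
Your window and integration arithmetic are correct.

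\textbf{The gap.} The chaos bound is the whole content of the theorem, and you only announce it. Your sketch is also off in the details that matter. You never say what the cavity remainder is. The claim that it is dominated ``exactly when $1-t\ge N^{-1/2}$'' is not derived from anything, and neither is the $\log N$ you lose through an unspecified exponential-moment argument. Most importantly, the quantity that controls the remainder is a fourth moment, so you need $\bbE\la R_{1,2}^4\ra\lesssim N^{-1}$ at $\beta=1$. The bound $\bbE\la R_{1,2}^2\ra\lesssim N^{-1/2}$ quoted in the paper does not imply this. It only gives $\bbE\la R_{1,2}^4\ra\lesssim N^{-1/2}$, and with that input the self-bounding inequality yields nothing better than the trivial $O(\sqrt N)$. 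So the fourth-moment estimate must actually be available from a citable result, not merely ``in the spirit of'' Talagrand.

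\textbf{How to fill it.} Proposition~\ref{p:cavity} (Dey--Kang, valid for every $\beta$ and independent of this theorem) gives, for all $t\in[0,1]$,
$(1-t)\bbE\la R_{1,2}^2\ra_t\le \frac1N + C\bbE\la R_{1,2}^4\ra$.
In the expansion of Lemma~\ref{l:cavity-expansion}, three things happen:
\begin{itemize}
\item The triple-overlap term is nonnegative and enters with a minus sign, so it can be dropped. It equals $N^{-3}\Tr(A_1^2A_2)$, where $A_1,A_2$ are the positive semidefinite two-point matrices of the two systems.
\item The $R$ versus $R^-$ correction also has a favorable sign.
\item Everything else costs $C\bbE\la R_{1,2}^4\ra$ by Lemma~\ref{l:cavity-bounds}.
\end{itemize}
With $\bbE\la R_{1,2}^4\ra\lesssim N^{-1}$ this gives $\bbE\la R_{1,2}^2\ra_t\lesssim (N(1-t))^{-1}$ for every $t<1$. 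There is no $\log N$ and no restriction on $1-t$. Your integration then yields $\Var(F_N)=O(\log N)$, which is stronger than the statement.

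Do not take the fourth moment from Proposition~\ref{p:overlap-ub-24mt} instead. Its proof uses this very theorem through Lemma~\ref{l:fe-lower-tail-crude}, so that would be circular.
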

The recent work of Dey and Kang \cite{dey2026fluctuations} proved that for $\beta = 1 - cN^{-1/3}$ for any constant $c>0$, the variance prediction of \cite{aspelmeier2008free} holds:
\[
  \Var(F_{N,\beta}) = \fr16 \log N + O(1)\,.
\]
While in the final stages of writing this manuscript, we also learned of the concurrent work of Schertzer \cite{schertzer2026order}, which showed the bounds
\[
  \fr12 \log \log \log N - O(1) \le \Var(F_{N,\beta=1}) \le \fr14 \log N + O(1)\,.
\]
In this paper, we confirm the conjecture of \cite{aspelmeier2008free} at criticality.
\begin{thm}\label{t:main}
  The free energy $F_{N,\beta=1}$ of the critical SK model satisfies the following.
  \begin{enumerate}[label=(\alph*)]
    \item \label{i:main-var} $\Var(F_{N,\beta=1}) = \fr16 \log N + O(1)$. 
    \item \label{i:main-clt} The centered and rescaled $F_{N,\beta=1}$ satisfies a Gaussian CLT, with 
    \[
      \lt(\fr{\log N}{6}\rt)^{-1/2} \lt(F_{N,\beta=1} - \fr{N}{4} + \fr{\log N}{12} \rt)
      \stackrel{d}{\rightarrow}
      \cN(0,1)\,.
    \]
  \end{enumerate}
\end{thm}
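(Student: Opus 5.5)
The plan is to use a reweighted second-moment argument, in the spirit of small subgraph conditioning. At $\beta=1$ we write $Z_N/\bbE Z_N$, with $\bbE Z_N = e^{N/4}$ up to the diagonal convention. The ALR cluster expansion identifies the fluctuating part of $\log Z_N$ with the cycle statistics $C_k = \Tr(\bW^k)-\bbE\Tr(\bW^k)$. For $\beta<1$ one has $\log(Z_N/\bbE Z_N)\approx \sum_{k\ge 3} \fr{\beta^k}{2k} C_k - (\text{centering})$, up to small corrections.

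At $\beta=1$ the variance $\sum_k \fr{1}{4k^2}\cdot 2k = \sum_k \fr{1}{2k}$ diverges. The key heuristic is that the expansion should be truncated at $k\approx K:=N^{1/3}$, the BBP/edge scale where $\bW$'s spectrum feels the edge. This gives a variance of $\fr12\cdot\fr13\log N = \fr16\log N$, and a mean shift of $-\fr12$ of that, i.e. $-\fr1{12}\log N$. This matches the centering $\fr N4-\fr{\log N}{12}$ in part~\ref{i:main-clt}.

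Concretely, we define the linear spectral statistic
\[
  L_N = \sum_{3\le k\le K} \fr{1}{2k} C_k ,
\]
or more naturally $L_N=-\fr14\sum_i \log(1-\lambda_i/2)$ smoothed at the edge on scale $N^{-2/3}$. We then consider the tilted measure $\widetilde\bbP$ with density $e^{L_N}/\bbE e^{L_N}$. The steps are as follows.

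\textbf{(i) Gaussian CLT for $L_N$.} Show that $L_N$ has variance $\fr16\log N+O(1)$ and is asymptotically Gaussian, using mesoscopic linear-statistics CLTs for GOE with a test function that is singular at the edge but cut off at scale $N^{-2/3}$.

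\textbf{(ii) First moment under the tilt.} Compute $\bbE[Z_N e^{-L_N}]$ and show that it equals $e^{N/4 - \log N/12+O(1)}$ times $\bbE e^{-L_N}$. For fixed $\bx$, planting $\bx$ shifts $\bW$ by a rank-one spike $\fr1N \bx\bx^\top$ at exactly the BBP-critical strength. Removing $e^{-L_N}$ then amounts to computing how a critical spike changes the edge statistics, which is a finite-rank perturbation computation.

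\textbf{(iii) Second moment.} Show $\bbE[Z_N^2 e^{-2L_N}]\le C(\bbE[Z_N e^{-L_N}])^2$, so that $Z_N e^{-L_N}/\bbE[\cdot]$ is tight and bounded away from zero with positive probability. The second moment is a sum over overlaps $R=R(\bx^1,\bx^2)$ of two-spike planted models. Without reweighting, the overlap integrand $e^{N\phi(R)}$ has $\phi(R)\approx -cR^4$ near $0$ at criticality, which produces the divergent factor. The reweighting by $e^{-2L_N}$ must exactly remove the logarithmic divergence. What remains should be an integral over $R$ on scale $N^{-1/3}$, with an $O(1)$ bound and an exponential tail in $N^{1/3}|R|$. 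This is the step I expect to be the main obstacle. It requires uniform control of how two correlated critical spikes interact with the edge-truncated statistic, including the regime $|R|\gg N^{-1/3}$ where the spikes combine into a supercritical one. There I would use large-deviation bounds for the top eigenvalue, together with Theorem~\ref{t:CL19}-type a priori bounds, to crudely cut off $|R|\ge N^{-1/3}\polylog N$.

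\textbf{(iv) Concentration and conclusion.} Upgrade the result of (iii) to concentration: show $F_N - L_N - \fr N4+\fr{\log N}{12}$ is $O(1)$ in $L^2$. Tightness alone does not control the variance, so we need lower-tail control of $\log(Z_N e^{-L_N})$. For this I would combine Gaussian concentration for $F_N$ (Lipschitz in $\bW$ on a high-probability event) with a Paley--Zygmund argument applied conditionally on the edge statistics. Part~\ref{i:main-var} then follows from (i) plus an $O(1)$ error uncorrelated at leading order, with the cross term controlled by Cauchy--Schwarz, giving $\fr16\log N+O(\sqrt{\log N})$ at first. To sharpen this to $O(1)$ we need the residual to be asymptotically independent of $L_N$, which comes from the small-subgraph-conditioning structure: the ratio converges to a constant in $L^2$ given the edge $\sigma$-field. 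Part~\ref{i:main-clt} is then immediate from the CLT for $L_N$ and Slutsky's theorem.
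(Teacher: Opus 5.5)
Your truncated ALR cycle heuristic correctly predicts the constant $\fr16$ and the centering. Reweighting plus overlap localization at the BBP scale $N^{-1/3}$ is also the right spirit. The plan, however, has two genuine gaps.

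\textbf{First gap: the choice of explanatory variable.} By orthogonal invariance of the GOE, $\bbE[Z_N\mid\bLambda]=Z^\sph_N$. So for $Y=Z_Ne^{-L_N}$, conditional Jensen gives $\bbE[Y^2]/(\bbE Y)^2\ge \bbE[W^2]/(\bbE W)^2$ with $W=Z^\sph_N e^{-L_N}$. Moreover, $F^\sph_N-L_N$ keeps order-one random fluctuations driven by the rescaled top eigenvalues, which no deterministic linear statistic absorbs. They enter through the random saddle point $\gamma$, with $N^{2/3}(\gamma-\lambda_1)$ of order one, and through $\log I_\bLambda$.

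Hence $\bbE[Y^2]/(\bbE Y)^2$ stays bounded away from $1$, and your step (iii) can at best give ``$\le C$''. Paley--Zygmund then yields $F_N\ge L_N+c_N-O(1)$ only with probability bounded below, not with high probability. That is not enough even for part (b).

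Step (iv) does not supply a working upgrade. Lipschitz concentration of $F_N$ is far too weak: the gradient norm in standard Gaussian coordinates is exactly $(\fr N2\la R_{1,2}^2\ra)^{1/2}$, which is $\asymp N^{1/6}$ even at the true overlap scale.

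Your phrase ``converges to a constant in $L^2$ given the edge $\sigma$-field'' points toward the fix, which is the paper's key idea: condition on the whole spectrum, i.e.\ reweight by $Z^\sph_N$ itself. Then the following hold.
\begin{itemize}
\item $\bbE[X_N]=1$ exactly.
\item $\bbE[X_N^2]=\bbE_q[J(q)]$ is a one-dimensional integral, with exact normalization $\bbE^\sph_q[J(q)]=1$.
\item Localization at $|q|\lesssim N^{-1/3}$ follows with no BBP computation. The inputs are evenness and log-concavity of $K$ (Pr\'ekopa--Leindler, since $\bM\mapsto Z^\sph_N(\bM)^{-2}$ is log-concave), the curvature bound $(\log K)''\ge -N$, and a single edge estimate $J(0)\gtrsim N^{-1/6}$ from the contour-integral formulas.
\end{itemize}
This gives $\bbE[(X_N-1)^2]\lesssim N^{-1/3}$, hence $F_N-F^\sph_N\to 0$ in probability, and part (b) follows from the spherical CLT. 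In your step (iii), by contrast, both the cancellation of the divergence and the $N^{-1/3}$ localization are asserted rather than derived.

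\textbf{Second gap: the $O(1)$ precision in part (a).} Convergence in law of the residual $E=F_N-L_N-c_N$, or its asymptotic independence from $L_N$, does not control $\Var(E)$; that needs $L^2$ control of the lower tail of the log-ratio. Nor does it control $\mathrm{Cov}(L_N,E)$ beyond the Cauchy--Schwarz bound $O(\sqrt{\log N})$. As the paper remarks, a CLT does not formally yield the variance.

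The paper does not decompose $F_N$ for part (a) at all. It converts the reweighted bounds $\bbE[X_N^2\la R_{1,2}^{2k}\ra]\lesssim N^{-2k/3}$ into $\bbE\la R_{1,2}^2\ra\lesssim N^{-2/3}$ and $\bbE\la R_{1,2}^4\ra\lesssim N^{-4/3}$ by controlling the lower tail of $X_N$. That tail control uses Chebyshev from $\bbE[(X_N-1)^2]\lesssim N^{-1/3}$, the exponential overlap tail, and Chen's convex lower-tail inequality fed by the Chen--Lam variance bound. These overlap bounds then go into the Dey--Kang cavity estimates, which pin $(1-t)\bbE\la R_{1,2}^2\ra_t$ to $\fr1N$ up to errors controlled by $\bbE\la R_{1,2}^2\ra$ and $\bbE\la R_{1,2}^4\ra$. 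Finally, Chatterjee's identity $\Var(F_N)=\fr N2\int_0^1\bbE\la R_{1,2}^2\ra_t\,\de t$, cut at $1-t=N^{-1/3}$, gives $\fr16\log N+O(1)$. Nothing in your plan plays this role.

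(Minor: the statistic should be $-\fr12\sum_i\log(1-\lambda_i/2)$. With coefficient $-\fr14$, the edge-truncated variance is $\fr1{24}\log N$.)
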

As part of his study of the critical phase transition, Talagrand \cite[Research~Problem~11.7.4]{talagrand2011mean2} asked to identify the order of $\bbE \la R_{1,2}^2 \ra_{\beta=1}$ and conjectured that it is $N^{-2/3}$.
As evidence for the lower bound of this conjecture, Chatterjee \cite[Proposition 11.7.6]{talagrand2011mean2} proved that for a universal $c>0$,
\[
  \bbE \la |R_{1,2}|^3 \ra \ge cN^{-1}\,.
\]
We confirm this conjecture.
For the upper bound, we in fact show an exponential moment bound at the critical scale $N^{-1/3}$.
\begin{thm}\label{t:overlap}
  There exists a universal constant $c>0$ such that the following holds.
  \begin{enumerate}[label=(\alph*)]
    \item \label{i:overlap-ub} $\bbE\la \exp(cN^{1/3}|R_{1,2}|)\ra_{\beta=1} \le 2$. 
    \item \label{i:overlap-lb} $\bbE\la R_{1,2}^2 \ra_{\beta=1} \ge cN^{-2/3}$.
  \end{enumerate}
  In particular, this implies $\bbE\la R_{1,2}^2 \ra_{\beta=1} \asymp N^{-2/3}$.
\end{thm}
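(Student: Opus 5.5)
We will prove Theorem~\ref{t:overlap} by writing two-replica Gibbs averages as ratios of partition functions of an explicit spherical/Ising tilted model, and by controlling these ratios through the spectrum of $\bW$ near its edge. The starting identity is
\[
  \bbE\la e^{\lambda N^{1/3}|R_{1,2}|}\ra = \bbE\, \frac{1}{Z_N^2}\, \frac{1}{4^N}\sum_{\bx,\by} e^{H_N(\bx)+H_N(\by)+\lambda N^{1/3}|R(\bx,\by)|}.
\]
The plan is to replace the random denominator $Z_N^2$ by a deterministic-looking quantity after reweighting by the spectral data of $\bW$, following the critical reweighted moment method mentioned in the abstract.

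First, we identify a reweighting functional $\Psi(\bW)$, built from linear spectral statistics $\Tr \bW^k$ for $k\le K$ together with an edge-regularized term. Concretely, we take $\log\det(\theta-\bW)$ with $\theta = 2+N^{-2/3}$-scale shift, which captures the diverging $\frac16\log N$ part. The functional is chosen so that under the tilted measure $\bbE[\cdot\, e^{-\Psi}]$ one has $Z_N/\bbE[Z_N e^{-\Psi}]\to 1$ in $L^2$. This is the second-moment computation. We compute $\bbE[Z_N^2 e^{-\Psi}]$ via the Gaussian integral over $\bW$. The pair $(\bx,\by)$ enters only through the rank-two perturbation of $\bW$ in directions $\bx,\by$, so the second moment reduces to
\[
  \sum_{\bx,\by} e^{N\phi(R)} \times \bigl(\text{BBP-type correction depending on } N^{1/3}R\bigr).
\]
At $\beta=1$ one has $\phi(R)=-R^4/12+O(R^6)$ after cancellation of the quadratic term, and the quadratic cancellation is exactly what forces the $N^{-1/3}$ scale. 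The key output of this step is an overlap-density bound for the planted/reweighted pair measure:
\[
  \frac{\bbE[\sum e^{H(\bx)+H(\by)} e^{-\Psi} \bone\{|R|\ge tN^{-1/3}\}]}{\bbE[Z^2e^{-\Psi}]}\le Ce^{-ct^{4}}\wedge Ce^{-ct}.
\]

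Second, we transfer this bound to the Gibbs average. We use Cauchy--Schwarz and the lower-tail concentration of $\log Z_N$ relative to its reweighted mean; Theorem~\ref{t:CL19} and the reweighting already give $\log Z_N-\log\bbE[Z_Ne^{-\Psi}]+\Psi = O_P(1)$. With a truncation on an event of high probability, we bound $\la e^{cN^{1/3}|R|}\ra$ by the annealed ratio. Choosing $c$ small then gives part~\ref{i:overlap-ub} with constant $2$.

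Third, for part~\ref{i:overlap-lb}, we use the same pair-measure computation in reverse. The reweighted planted overlap distribution, rescaled by $N^{1/3}$, has a nondegenerate limit density proportional to $e^{-r^4/12}$ times a BBP factor. Hence $\bbE\la R_{1,2}^2\ra$ is bounded below by $cN^{-2/3}$, since with positive probability $Z_N$ is within a constant of its reweighted mean. An alternative is to lower bound via $\partial_\beta^2$ of the free energy, using $\Var(F)$ growth.

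The main obstacle is the second step's treatment of the near-edge spectrum. The rank-two perturbation induced by $(\bx,\by)$ at $\beta=1$ sits exactly at the BBP threshold, so the correction factor must be controlled uniformly through the Tracy--Widom window. To do this, we first truncate on $\lambda_{\max}(\bW)\le 2+N^{-2/3}\log N$ and use edge rigidity to show that the resulting determinant ratio is at most $e^{C N^{1/3}|R|}$ with $C$ small enough to be absorbed by the quartic decay.
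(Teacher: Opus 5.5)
There is a genuine gap at the core of Step~1. The exponential tail at scale $N^{-1/3}$ for the reweighted pair overlap is asserted, and the mechanism you give for it is wrong. After the quadratic cancellation, $e^{-NR^4/12}$ localizes only at scale $N^{-1/4}$; this is exactly why $\bbE[Z_N^2]/\bbE[Z_N]^2\asymp N^{1/4}$ at $\beta=1$. At $|R|=tN^{-1/3}$ this factor equals $e^{-t^4N^{-1/3}/12}\to 1$, so the claimed $Ce^{-ct^4}$ tail is off. All of the $N^{-1/3}$ localization must therefore come from the spectral correction factor, which has to \emph{decay} roughly like $e^{-cN|R|^3}$.

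Your proposed fix does the opposite. An \emph{upper} bound $e^{CN^{1/3}|R|}$ on the determinant ratio is a growing factor, and the quartic term cannot absorb it for $|R|\lesssim N^{-2/9}$. So it cannot even give $\bbE[Z_N^2e^{-\Psi}]\lesssim\bbE[Z_Ne^{-\Psi}]^2$. Moreover, $\log\det(\theta-\bW)$ at a deterministic $\theta=2+O(N^{-2/3})$ is undefined with probability bounded below, since $\lambda_1$ has Tracy--Widom fluctuations.

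The missing idea is how the paper gets localization without any BBP asymptotics. It reweights by $Z^\sph_N$ itself, so that $\bbE[X_N^2\la g(R_{1,2})\ra]=\bbE_q[J(q)g(q)]$ with $J(q)=e^{Nq^2/2}e^{N/2}K(q)$.
\begin{itemize}
\item $K$ is even and log-concave by Pr\'ekopa--Leindler, because $\bM\mapsto Z^\sph_N(\bM)^{-2}$ is log-concave.
\item The same identity for the spherical Gibbs measure gives the exact normalization $\bbE^\sph_q[J(q)]=1$.
\item A one-point bound $J(0)\gtrsim N^{-1/6}$, proved from contour-integral representations of $Z^\sph_N$ and $Z^\sph_{N,2}$ on an event of constant probability, gives the essentially log-concave density $\rho J$ height $\gtrsim N^{1/3}$ at $0$. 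This forces tails $e^{-cN^{1/3}|q|}$.
\end{itemize}

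Steps~2 and~3 do not close either. Tightness of $\log X_N$ cannot transfer an exponential moment: on the exceptional event, $\la e^{cN^{1/3}|R_{1,2}|}\ra$ can be as large as $e^{cN^{1/3}}$. You need $\bbP(X_N\le N^{-a})\le N^{-r(a)}$ with $r(a)$ growing in $a$, plus a multiscale decomposition in $(X_N,|R_{1,2}|)$. The paper's sequences $(a_k,b_k)$ do this, trading the loss $N^{2a_k}$ on $\{X_N>N^{-a_k}\}$ against a reweighted tail $N^{-(\theta^{-1}-1)b_k}$ borrowed from a larger exponent $c_0$.

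In the paper, these polynomial lower tails come from Chen's convex-Gaussian lower-tail inequality fed with the sharp bound $\Var(F_N)\le\frac16\log N+O(1)$. With Chen--Lam's $O(\log^2 N)$, the probability $\bbP\bigl(F_N\le\frac N4-\frac{\log N}{12}-\frac a2\log N\bigr)$ is only bounded by a constant depending on $a$, with no decay in $N$. So part~(a) sits at the end of a bootstrap: reweighted bounds $\Rightarrow$ $\bbE\la R_{1,2}^2\ra\lesssim N^{-2/3}$ and $\bbE\la R_{1,2}^4\ra\lesssim N^{-4/3}$ $\Rightarrow$ sharp variance $\Rightarrow$ exponential moment.

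For part~(b), a nondegenerate $N^{1/3}$-scaling limit of the reweighted overlap is a full BBP computation that you have not supplied. In the paper's framework, a lower bound on the spread of $\rho J$ amounts to $J(0)\lesssim N^{-1/6}$. That bound requires controlling rare spectral events, and the paper obtains it only as a corollary of~(b). The paper instead uses the Dey--Kang cavity estimate. Combined with the moment upper bounds above, it gives $(1-t)\bbE\la R_{1,2}^2\ra_t\ge\frac1N\bigl(1-CN^{-1/3}-CN^{-1/6}(1-t)^{-1/2}\bigr)$. Taking $t=1-4C^2N^{-1/3}$ and using Chatterjee's monotonicity of $t\mapsto\bbE\la R_{1,2}^2\ra_t$ then yields $\bbE\la R_{1,2}^2\ra\gtrsim N^{-2/3}$.

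Your variance-based alternative could work through Chatterjee's formula $\Var(F_N)=\frac N2\int_0^1\bbE\la R_{1,2}^2\ra_t\,dt$ and the same monotonicity. However, it needs a variance lower bound of precision $\frac16\log N-O(1)$, which is itself an output of that cavity argument.
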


\subsection{Related work}

A closely related but simpler model is the spherical SK model.
This model has Hamiltonian \eqref{e:H} on domain $S_N = \sqrt{N}\bbS^{N-1}$, with partition function and free energy
\baln\label{e:Z-and-F-sph}
  Z^\sph_{N,\beta} &= \int e^{\beta H_N(\bx)}\,\de \nu_N(\bx)\,, & 
  F^\sph_{N,\beta} &= \log Z^\sph_{N,\beta}\,,
\ealn
where $\nu_N$ denotes the uniform Haar measure on $S_N$.
The Parisi formula for the limiting free energy of this model (including for the more general mixed $p$-spin Hamiltonian) was established in \cite{talagrand2006spherical}.

For the spherical SK model, Baik and Lee \cite{baik2016fluctuations} showed that for $\beta < 1$, $F^\sph_{N,\beta}$ has Gaussian fluctuations of order $1$, with the same CLT \eqref{e:ALR}, and that for $\beta > 1$, $F^\sph_{N,\beta}$ has Tracy--Widom fluctuations of order $N^{1/3}$.
At criticality $\beta=1$, Landon \cite{landon2022free} showed $F^\sph_{N,\beta}$ has Gaussian fluctuations of order $\sqrt{\log N}$, with the same Gaussian CLT as Theorem~\ref{t:main}\ref{i:main-clt}.
For $\beta$ in the critical window $\beta = 1 + b N^{-1/3} \sqrt{\log N}$, \cite{landon2022free} (for $b \le 0$) and Johnstone, Klochkov, Onatski, and Pavlyshyn \cite{johnstone2024spin} (for $b\in \bbR$) showed that the limit becomes a sum of independent Gaussian and Tracy--Widom random variables:
\baln\label{e:spherical-critical-clt}
  \lt(\fr{\log N}{6}\rt)^{-1/2} \lt(F^\sph_{N,\beta} - N\FE(\beta) + \fr{\log N}{12} \rt)
  &\stackrel{d}{\rightarrow}
  \cN(0,1) + \sqrt{\fr{3}{2}} b_+ \TW_1\,, \\
  \nonumber
  \text{where} \qquad 
  \FE(\beta) &= \begin{cases}
    \beta^2 / 4 & \beta \le 1\,, \\
    \beta - \fr12 \log \beta - \fr34 & \beta > 1\,.
  \end{cases}
\ealn
Here $b_+ = \max(b,0)$.
All of these results rely on an exact contour integral formula (Lemma~\ref{lem:contour-integral-1}) for $Z^\sph_{N,\beta}$ introduced by \cite{baik2016fluctuations}, which can be analyzed precisely using random matrix theory.
As we explain in \S\ref{s:overview}, this random matrix interpretation of the spherical model will also be an important input to our proof.

The above methods have also been extended to study free energy fluctuations of multi-species spin glasses.
These include the limit distributions of the bipartite spherical SK model's free energy at all temperatures \cite{baik2020free,collins2025fluctuations} and multi-species Ising SK model at high temperature \cite{dey2021fluctuation}, and an upper bound on $\Var(F_{N,\beta})$ for the bipartite Ising SK model at criticality \cite{collins2025order}.

Finally, Prodromidis and Sly \cite{prodromidis2026distribution} recently characterized the fluctuations of the free energy, and limiting law of the magnetization, of the critical ferromagnetic Ising model on sparse $d$-regular and Erd\H{o}s--R\'enyi graphs.
See Remark~\ref{r:difference-with-ferro-ising} below for more on the relation between this paper and the present work.

\begin{rmk}
  We expect that for $\beta$ in the critical window $\beta = 1 + b N^{-1/3} \sqrt{\log N}$, the distributional limit \eqref{e:spherical-critical-clt} holds for the SK model as well.
  In Remark~\ref{r:critical-window-extension} below we describe a possible strategy for proving this.
  However, this strategy involves random matrix inputs that are much more delicate than what the present paper requires, and we will not pursue this here. 
\end{rmk}

\subsection{Main idea: spherical SK as critical reweighting}
\label{ss:critical-reweighting}

The SK model is well understood in the high-temperature phase $\beta < 1$.
The main difficulty at criticality is that as $\beta \uparrow 1$, the second moment ratio $\bbE[Z_{N,\beta}^2] / \bbE[Z_{N,\beta}]^2$ diverges.
Indeed, a simple calculation shows that for fixed $\beta < 1$,
\beq\label{e:high-temp-2mt-ratio}
  \fr{\bbE[Z_{N,\beta}^2]}{\bbE[Z_{N,\beta}]^2}
  = 4^{-N}\sum_{\bx,\by \in \Sigma_N} e^{N \beta^2 R(\bx,\by)^2 / 2}
  = \sum_{q\in \{-1,-1+\fr2N,\ldots,1\}} 2^{-N} \binom{N}{\fr{1+q}{2} \cdot N} e^{N\beta^2 q^2/2}
  \stackrel{N\rightarrow\infty}{\longrightarrow} \fr{1}{\sqrt{1-\beta^2}}
\eeq
is bounded independently of $N$.
In such settings, $Z_{N,\beta} / \bbE[Z_{N,\beta}]$ has $O(1)$ fluctuations, and there are standard methods (described below) to identify these fluctuations.
However, as $\beta \uparrow 1$ the right-hand side of \eqref{e:high-temp-2mt-ratio} diverges, reflecting diverging quenched fluctuations, and such methods no longer directly apply.

The \textbf{small subgraph conditioning} method is a powerful technique, introduced in \cite{robinson1992almost,robinson1994almost}, to identify limiting fluctuations of the partition function $\tZ_N$ of a general ``high-temperature'' spin system where $\tZ_N / \bbE[\tZ_N]$ has $O(1)$ quenched fluctuations.
The idea is to identify a reweighting random variable $A_N$, whose distribution we know, such that\footnote{In some models, the main contribution to $\bbE[\tZ_N^2]$ comes from a large deviation event. In such cases one aims to identify $A_N$ such that \eqref{e:ssc-goal} holds after further truncating $\tZ_N$ to a typical event. For the same reason, the quenched fluctuations of $\tZ_N / \bbE[\tZ_N]$ do not always diverge when $\bbE[\tZ_N^2] / \bbE[\tZ_N]^2$ does, though in our model this will be the case.}
\beq\label{e:ssc-goal}
  \bbE[(\tZ_N / A_N - 1)^2] \ll 1\,.
\eeq
That is, $A_N$ ``explains the fluctuations'' of $\tZ_N$.
This method has been applied to many spin systems on random graphs \cite{cooper1996perfect,mossel2009hardness,kemkes2010chromatic,fabian2021ising,coja2026fluctuations,prodromidis2026distribution}, as well as mean-field models including the SK model \cite{abbe2022proof,bencs2025zeros}.
The name ``small subgraph conditioning'' comes from the fact that in random graph settings, one takes $A_N = \bbE[\tZ_N] A'_N$ where $A'_N$ is a statistic of subgraphs of bounded or slowly growing size.
In mean-field settings, the analogous $A'_N$ is a low-degree cluster expansion in $\bW$.

However, the assumption that $\tZ_N / \bbE[\tZ_N]$ has $O(1)$ quenched fluctuations was essential to previous applications of this method.
This is because a small subgraph-based statistic $A_N$ can only explain a constant amount of variance; if this assumption fails the amount of variance $A_N$ needs to explain must also diverge.

Conceptually, our proof can be viewed as a critical version of this reweighted second moment idea, where the explanatory variable $A_N$ is the \emph{spherical} SK partition function $Z^\sph_{N,\beta=1}$.
Unlike $\bbE Z_{N,\beta=1}$, $Z^\sph_{N,\beta=1}$ remains an effective approximation of $Z_{N,\beta=1}$ even at criticality.
At the same time, the spherical model's aforementioned random matrix interpretation makes the resulting moments tractable.
We will show:
\begin{thm}\label{t:compare-cube-sphere}
  Let $X_N = Z_{N,\beta=1} / Z^\sph_{N,\beta=1}$. Then, $\bbE[(X_N-1)^2] \lesssim N^{-1/3}$.
\end{thm}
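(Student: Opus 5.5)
We expand $\bbE[(X_N-1)^2] = \bbE[X_N^2] - 2\bbE[X_N] + 1$ and aim to show $\bbE[X_N] = 1 + O(N^{-1/3})$ and $\bbE[X_N^2] = 1 + O(N^{-1/3})$.

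\textbf{First moment.} The basic observation is that $\bbE$ over the disorder can be organized through the rotational invariance of $\bW \sim \GOE(N)$. Write $\bW = \bQ^\top \bLambda \bQ$ with $\bQ$ Haar and independent of the spectrum $\bLambda$. Then $Z^\sph_{N,1}$ depends only on $\bLambda$, while conditionally on $\bLambda$,
\[
  \bbE[Z_{N,1} \mid \bLambda] = 2^{-N}\sum_{\bx} \bbE_{\bQ} e^{\fr12 (\bLambda \bQ\bx,\bQ\bx)} = Z^\sph_{N,1},
\]
because $\bQ\bx$ is uniform on $S_N$ for each fixed $\bx \in \Sigma_N \subset S_N$. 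Hence $\bbE[X_N \mid \bLambda] = 1$ exactly, so $\bbE X_N = 1$. This is the key structural input, and it reduces the theorem to the second moment: $\bbE[(X_N-1)^2] = \bbE[X_N^2]-1$.

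\textbf{Second moment.} Conditionally on $\bLambda$,
\[
  \bbE[Z_{N,1}^2\mid\bLambda] = 4^{-N}\sum_{\bx,\by} \bbE_{\bQ} e^{\fr12(\bLambda\bQ\bx,\bQ\bx)+\fr12(\bLambda\bQ\by,\bQ\by)}.
\]
The summand depends only on $q = R(\bx,\by)$; write it as $Z^{\sph,(2)}_{N}(q;\bLambda)$, the two-replica spherical partition function with the overlap constrained to $q$. Thus
\[
  \bbE[X_N^2] = \bbE\Bigl[ \sum_q 2^{-N}\tbinom{N}{\fr{1+q}{2}N} \, \fr{Z^{\sph,(2)}_N(q;\bLambda)}{(Z^\sph_{N,1})^2}\Bigr].
\]
Equivalently, $\bbE[X_N^2]$ is the expectation of $\bbE_{\bLambda}$ of the ratio $p_{\mathrm{cube}}(q)/p_{\mathrm{sph}}(q)$ integrated against the law of the overlap of two independent samples from the spherical Gibbs measure with spectrum $\bLambda$. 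Here $p_{\mathrm{cube}}$ is the binomial overlap law $\approx \sqrt{N/2\pi}\,e^{-N q^2/2 - N q^4/12-\cdots}$ and $p_{\mathrm{sph}}$ is the overlap law of two uniform points on $S_N$, which has density $\propto (1-q^2)^{(N-3)/2} \approx e^{-Nq^2/2 - Nq^4/4}$. So the ratio is $1 + O(Nq^4) + O(1/N)$ on $|q| \ll 1$, and it is bounded by $e^{cNq^4}$ up to lattice/normalization corrections. The task becomes showing that under $\bbE_\bLambda$ of the spherical two-replica Gibbs measure, $\bbE[N q^4 e^{cNq^4}] \lesssim N^{-1/3}$. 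This is precisely the statement that spherical overlaps are of order $N^{-1/3}$ with good tails.

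\textbf{Spherical overlap estimate (main obstacle).} For the spherical model the Gibbs measure given $\bLambda$ is explicit through the contour integral (Lemma~\ref{lem:contour-integral-1}). The overlap satisfies $R_{1,2} = (\bx^1,\bx^2)/N$ with $\bx^i = \sum_k u^i_k \be_k$ in the eigenbasis. Under the Gibbs measure, the coordinates behave like independent Gaussians with variances $1/(2(\gamma-\lambda_k))$, where $\gamma$ is the saddle point. Hence $\bbE\la R_{1,2}^2\ra \approx \bbE\, N^{-2}\sum_k (\gamma-\lambda_k)^{-2}$. At $\beta=1$ the saddle sits within $O(N^{-2/3})$ of the edge $\lambda_1 \approx 2$, so this sum is dominated by the top $O(1)$ eigenvalues, each contributing $N^{-2}\cdot N^{4/3}$, for a total of order $N^{-2/3}$. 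I would make this rigorous by conditioning on a high-probability rigidity event for the top eigenvalues and bounding the contour integral by steepest descent. I would then control moments $\bbE\la e^{c N q^4}\ra$ via exponential moments of $N^{1/3}|R_{1,2}|$, which have subexponential tails since the Gaussian quadratic form in the top eigendirections has such tails. On the rare event where the eigenvalue gap $\lambda_1-\lambda_2$ is tiny, or $\gamma-\lambda_1$ is atypically small, I expect to need level-repulsion and lower-tail bounds on $\lambda_1$ to show this event contributes $O(N^{-1/3})$. This is the step I expect to be hardest: the weight $e^{cNq^4}$ interacts with the BBP-type edge behavior, and the rare event must be controlled uniformly in $N$.

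Assuming this, $\bbE[X_N^2] - 1 \lesssim \bbE\la N R_{1,2}^4\ra_{\sph} + N^{-1} \lesssim N\cdot N^{-4/3} = N^{-1/3}$, which proves the claim.
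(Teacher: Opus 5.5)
Your first-moment identity and the conditional second-moment formula are correct; they are a quenched version of Lemma~\ref{l:reweighted-2mt}. The gap is that the whole theorem sits in your ``spherical overlap estimate,'' and the route you sketch for it does not close.

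(i) Exponential moments of $N^{1/3}|R_{1,2}|$ cannot control $\bbE\la NR_{1,2}^4e^{cNR_{1,2}^4}\ra^\sph$. For $|q|\gtrsim N^{-2/9}$ one has $Nq^4\gtrsim N^{1/3}|q|$, so a tail $e^{-cN^{1/3}|q|}$ is swamped. Worse, the cube/sphere ratio is not $\le e^{cNq^4}$ on all of $[-1,1]$. It behaves like $e^{Nq^4/6}$ for small $q$, but is super-exponential near $|q|=1$: $p(1)=2^{-N}$, while $\rho\le C\sqrt N\,(4/N)^{(N-3)/2}$ on $[1-\tfrac2N,1]$. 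So you would need pointwise, large-deviation-precision bounds on the annealed spherical overlap density $\rho J$ up to $|q|=1$. The relevant spectral events are exponentially rare yet produce $|R_{1,2}|\asymp 1$; for instance, $\lambda_1-2\asymp 1$ forces condensation on the top eigenvector. Probability and weight then compete on the same $e^{\Theta(N)}$ scale, and level repulsion or tail bounds on $\lambda_1$ do not settle that competition.

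(ii) The independent-Gaussian picture for eigen-coordinates is only heuristic at $\beta=1$. Under the Gaussian proxy, $\sum_k u_k^2$ fluctuates on scale $N^{2/3}$, and these fluctuations come from the same top $O(1)$ coordinates. So the spherical constraint distorts those coordinates at order one. A rigorous version amounts to upper bounds on the three-dimensional contour integral for the fixed-overlap two-replica partition function, for every $q$ and integrably in $\bLambda$. That is essentially the BBP computation \eqref{e:K-true-asymptotic} that the paper explicitly avoids.

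(iii) Replacing $\sum_{q\in\cQ_N}p(q)J(q)$ by an integral requires regularity of $J$ at scale $N^{-1}$. The resulting pointwise error is $O(|q|)$, the same order $N^{-1/3}$ as your main term. So it is not a negligible ``lattice correction'' without an input such as $0\le(\log J)'(q)\le Nq$ on $[0,1]$.

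The missing idea is to average over $\bW$ first and use the structure of the annealed kernel. The Gaussian change of measure gives $J(q)=e^{Nq^2/2}e^{N/2}K(q)$, with $K$ even and log-concave (Pr\'ekopa--Leindler applied jointly in $(q,\bW)$) and $(\log K)''\ge -N$. Hence $\psi=\rho J$, which integrates to exactly $1$, is log-concave after flattening on $|q|\le 2N^{-1/2}$. Its width is therefore forced to be $\lesssim (N^{1/2}J(0))^{-1}$.

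The only random-matrix input is then the one-point lower bound $J(0)\gtrsim N^{-1/6}$ (Proposition~\ref{p:J-of-zero}). This needs only a positive-probability spectral event and no rare-event control. Monotonicity of $K$ disposes of the cube tail all the way to $|q|=1$, via $p(q)e^{Nq^2/2}\le C/\sqrt N$ and $e^{N/2}K(q)\le e^{N/2}K(q_\ub)$. The bound $0\le(\log J)'\le Nq$ gives the lattice comparison. In the paper, the spherical overlap localization you want as an input is an output (Corollary~\ref{c:sphere}).
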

To our knowledge, this is the first application of the reweighted second moment method where the explanatory variable $A_N$ has diverging variance.

Theorem~\ref{t:compare-cube-sphere} and the spherical model's CLT \eqref{e:spherical-critical-clt} (with $b=0$) directly imply the CLT in Theorem~\ref{t:main}\ref{i:main-clt}.
The sharper variance bound in Theorem~\ref{t:main}\ref{i:main-var} is proved through the overlap estimates in Theorem~\ref{t:overlap}, which are in turn proved by the same reweighting idea.
In particular, for general test functions $g$, we will be able to estimate (see Lemma~\ref{l:reweighted-2mt})
\[
  \bbE[X_N^2 \la g(R_{1,2}) \ra]\,.
\]
This bounds the moments $\bbE \la g(R_{1,2}) \ra$ appearing in Theorem~\ref{t:overlap} after controlling the lower tail of $X_N$.
\textbf{As we explain in \S\ref{s:overview}, the exponents in Theorems~\ref{t:overlap} and \ref{t:compare-cube-sphere} come from the BBP edge scaling \cite{baik2005phase,peche2006largest,bloemendal2013limits,bloemendal2016limits}, which enters these moment calculations after reweighting by $Z^\sph_{N,\beta=1}$.}

The same sphere-to-cube comparison also transfers the critical variance and overlap estimates to the spherical SK model.
Let $\la \cdot \ra_\beta^\sph$ denote average with respect to the Gibbs measure on $S_N$ with density
\[
  \de \mu^\sph_{N,\beta}(\bx) = \fr{e^{\beta H_N(\bx)}}{Z^\sph_{N,\beta}}\,\de \nu_N(\bx)\,.
\]
\begin{cor}\label{c:sphere}
  There exists a universal constant $c>0$ such that the following holds.
  \begin{enumerate}[label=(\alph*)]
    \item \label{i:sphere-var} $\Var(F^\sph_{N,\beta=1}) = \fr16 \log N + O(1)$.
    \item \label{i:sphere-overlap-ub} $\bbE \la \exp(cN^{1/3} |R_{1,2}|) \ra^\sph_{\beta=1} \le 2$.
    \item \label{i:sphere-overlap-lb} $\bbE \la R_{1,2}^2 \ra^\sph_{\beta=1} \ge cN^{-2/3}$.
  \end{enumerate}
\end{cor}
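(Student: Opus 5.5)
The plan is to transfer each statement from the cube to the sphere, in one of two ways. For (a) I would use the identity $F^\sph_{N,1}=F_{N,1}-\log X_N$ together with Theorem~\ref{t:compare-cube-sphere}. For (b) and (c) I would rerun the reweighted second-moment computation behind Theorem~\ref{t:overlap} with the sphere as reference measure, where it should become simpler.

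\textbf{Variance.} By Minkowski,
\[
\big|\Var(F^\sph_{N,1})^{1/2}-\Var(F_{N,1})^{1/2}\big|\le \Var(\log X_N)^{1/2}.
\]
Since $\Var(F_{N,1})=\fr16\log N+O(1)$ by Theorem~\ref{t:main}\ref{i:main-var}, it suffices to show $\bbE[(\log X_N)^2]=O(1)$. The resulting cross term is $O(\sqrt{\log N})$, which is not quite $O(1)$. So I would in fact aim for $\bbE[(\log X_N)^2]\lesssim N^{-\delta}$ for some $\delta>0$. I would split according to $X_N$:
\begin{enumerate}[label=(\roman*)]
\item On $\{X_N\ge 1/2\}$, use $|\log X_N|\le 2|X_N-1|$. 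Theorem~\ref{t:compare-cube-sphere} then gives a contribution $\lesssim N^{-1/3}$.
\item On $\{X_N<1/2\}$, which by Chebyshev has probability $\lesssim N^{-1/3}$, I need a lower-tail estimate for $X_N$. I expect something of the form $\bbP(X_N\le e^{-t})\le CN^{-1/3}e^{-ct}$, or at least an integrable polynomial-in-$t$ version.
\end{enumerate}
The lower tail of $X_N$ must be controlled anyway to deduce Theorem~\ref{t:overlap} from the reweighted moments, so I would invoke that estimate here.

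\textbf{Fallback for the lower tail.} If only a cruder lower-tail bound is available, I would use Gaussian concentration of $\log X_N$ in $\bW$. The gradient of $F$ is $\fr12\la \bx\bx^\top\ra$, so $\|\nabla_{\bW}\log X_N\|^2\lesssim N^2(\la R_{1,2}^2\ra+\la R_{1,2}^2\ra^\sph)$. With overlaps of order $N^{-1/3}$, the Lipschitz scale is $N^{1/3}$ in the normalized Gaussian coordinates. I would then combine this with the small probability $N^{-1/3}$ to bound $\bbE[(\log X_N)^2\bbone_{X_N<1/2}]$.

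This step, bounding $\bbE[(\log X_N)^2\bbone_{X_N<1/2}]$, is the one I expect to be the \textbf{main obstacle}. Crude Poincaré or moment bounds lose polynomial factors, so a genuinely quantitative lower tail for $X_N$ seems necessary.

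\textbf{Overlaps.} For (b) and (c), I would write
\[
\bbE\la g(R_{1,2})\ra^\sph=\bbE\Big[(Z^\sph_{N,1})^{-2}\iint e^{H_N(\bx)+H_N(\by)}g(R(\bx,\by))\,\de\nu_N(\bx)\,\de\nu_N(\by)\Big].
\]
This is exactly the quantity in Lemma~\ref{l:reweighted-2mt}, with the uniform measure on $\Sigma_N$ replaced by $\nu_N$ and the weight $X_N^2$ replaced by $1$. I would redo that computation for the sphere. The only input about the reference measure should be the law of the overlap of two independent samples, which on the sphere has density $\propto(1-q^2)^{(N-3)/2}$. This law has the same Gaussian behaviour at scale $N^{-1/2}$ and the same large-deviation rate near $0$ as the binomial overlap on the cube. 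Hence the BBP-edge analysis should go through verbatim and yield:
\begin{itemize}
\item the bound $\bbE\la \exp(cN^{1/3}|R_{1,2}|)\ra^\sph\le 2$ for $g=e^{cN^{1/3}|q|}$;
\item the lower bound $\bbE\la R_{1,2}^2\ra^\sph\ge cN^{-2/3}$ for $g=q^2$.
\end{itemize}
No lower-tail control of $X_N$ is needed here, since no weight $X_N^{2}$ appears. I would check that the constants are uniform, shrinking $c$ if necessary so that the same universal constant works in all three parts.
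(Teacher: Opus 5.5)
\textbf{Part (c) has a genuine gap.} On the sphere the reweighted identity gives exactly $\bbE\la R_{1,2}^2\ra^\sph=\int_{-1}^1 q^2\psi(q)\,\de q$, where $\psi=\rho J$ is an even, essentially log-concave probability density. The only random-matrix input the argument supplies, however, is the one-sided estimate $J(0)\gtrsim N^{-1/6}$, i.e.\ $\psi(0)\gtrsim N^{1/3}$. Combined with log-concavity, this confines $\psi$ to scale $\lesssim N^{-1/3}$. That is an \emph{upper} bound on overlaps, which is why (b) works.

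Normalization and log-concavity are equally compatible with $\psi$ living on any scale between $N^{-1/2}$ and $N^{-1/3}$. A lower bound on $\int q^2\psi$ therefore needs the reverse estimate $\psi(0)\lesssim N^{1/3}$, i.e.\ $J(0)\lesssim N^{-1/6}$; the trivial bound $J(0)\lesssim 1$ only yields $N^{-1}$. The reverse estimate is an upper bound on $\bbE[Z^\sph_{N,2}/(Z^\sph_N)^2]$, so it requires controlling atypical spectra on which $Z^\sph_N$ is unusually small. The positive-probability density argument of \S\ref{s:rmt} gives no such control, and the full BBP asymptotic $\log K(q)\approx\log K(0)-CN|q|^3$ is not proved.

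The missing idea is to import the lower bound from the cube, where it comes from a different mechanism. Theorem~\ref{t:overlap}\ref{i:overlap-lb} is proved by the cavity estimate (Proposition~\ref{p:cavity}) together with Chatterjee's monotonicity, not by reweighting. Comparing it with Proposition~\ref{p:overlap-ub-24mt}, which keeps the dependence $\bbE\la R_{1,2}^2\ra\lesssim\max((NJ(0)^2)^{-1},N^{-2/3-\eps})$ explicit, forces $J(0)\lesssim N^{-1/6}$. Then $\psi(0)\lesssim N^{1/3}$. Since $f'\le 0$ gives $\phi'(q)\le 3q$ for $q\ge 0$, you get $\psi\le 2\psi(0)$ on $|q|\le cN^{-1/3}$. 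Hence $\bbE\la\bone\{|R_{1,2}|\le cN^{-1/3}\}\ra^\sph\le 4cC\le\fr12$ for small $c$, which gives (c).

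\textbf{Part (b)} is essentially the paper's argument. On the sphere $\int\rho J=1$ exactly, so no comparison step or lower-tail control is needed, and Lemma~\ref{lem:spherical-exp-moment} with $a=N^{1/2}J(0)\gtrsim N^{1/3}$ concludes.

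\textbf{Part (a)} takes a different route from the paper. The paper conditions on $\cE=\{|F_N-F^\sph_N|\le N^{-1/9}\}$ and uses the law of total variance with $22$nd-moment bounds on $F_N-m$ and $F^\sph_N-m$. Your Minkowski bound via $\Var(\log X_N)$ is more economical and works, but you left its key input open.

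Your fallback would not supply that input. $\log X_N$ has no uniform Lipschitz bound, because the overlap is only \emph{typically} of order $N^{-1/3}$, and Poincar\'e gives only $N^{1/3}$.

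The lower tail is in fact cheap. With $m=\fr N4-\fr{\log N}{12}$, you have $\{X_N\le e^{-s}\}\subset\{F_N\le m-\fr s2\}\cup\{F^\sph_N\ge m+\fr s2\}$.
\begin{enumerate}[label=(\roman*)]
\item The first event is controlled by Chen's lower-tail inequality for the convex function $F_N(\bW)$ (Lemma~\ref{l:convex-gaussian-lower-tail}), using $\Var(F_N)\lesssim\log N$ and the location of the median from the CLT.
\item The second event is controlled by Markov's inequality, since $\bbE Z^\sph_N=e^{N/4}$.
\end{enumerate}
Bound $\bbP(-\log X_N>s)$ by $\bbP(X_N<\fr12)\lesssim N^{-1/3}$ for $s\le L\log N$, and by these tails beyond that. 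This gives $\bbE[(\log X_N)^2\bone\{X_N<\fr12\}]\lesssim N^{-1/3}\log^2N$, which is more than enough.
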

Note that while the CLT \eqref{e:spherical-critical-clt} with $b=0$ suggests $\Var(F^\sph_{N,\beta=1})$ has leading order $\fr16 \log N$, this does not formally follow, nor is the precision $O(1)$ in Corollary~\ref{c:sphere}\ref{i:sphere-var} clear.

\begin{rmk}[{Relation to \cite{prodromidis2026distribution}}]\label{r:difference-with-ferro-ising}
  The recent paper \cite{prodromidis2026distribution} also uses small subgraph conditioning to characterize the fluctuations of the critical ferromagnetic Ising model.
  The mechanisms driving their model's critical transition are different from those of our model, and we view these two papers as complementary explorations of different types of critical transitions.

  In their ferromagnetic model, one can write the partition function $\tZ_N$ as a sum of contributions $\tZ_{N,m}$ from each magnetization $m = (\bx, \bone)$.
  This model's critical transition is driven by $\tZ_{N,m=0}$ changing from a global maximum to a local minimum in $m$ as the temperature approaches criticality.
  At the same time, the fixed-magnetization models $\tZ_{N,m}$ (for the relevant $m$ near $0$) have second moment ratios $\bbE[\tZ_{N,m}^2] / \bbE[\tZ_{N,m}]^2$ that remain bounded.
  In contrast, our model's critical transition is driven by $\bbE[Z_{N,\beta}^2] / \bbE[Z_{N,\beta}]^2$ diverging as $\beta \uparrow 1$, with quenched fluctuations that also diverge.
  While both models require factors $A_N$ to explain the critical fluctuations that go beyond the usual small subgraph conditioning methodology, the nature of the factors $A_N$ used is accordingly different.
\end{rmk}

\subsection*{Notation}

\textbf{For the rest of the paper we set $\beta = 1$.}
We will abbreviate $Z_N = Z_{N,\beta=1}$, and similarly $F_N$, $Z^\sph_N$, $F^\sph_N$.
We will sometimes write $Z_N(\bW)$ (and so on) to emphasize the dependence of $Z_N$ on $\bW$.

We will use $\bbE$ to denote expectation with respect to the disorder $\bW$ and $\la \cdot \ra$, $\la \cdot \ra^\sph$ (and other variants we will introduce) to denote Gibbs average with respect to \eqref{e:gibbs-msr}, \eqref{e:Z-and-F-sph} conditional on $\bW$.
The Gibbs averages will always be with respect to inverse temperature $\beta=1$.
There will be no confusion between $\la \cdot \ra$ and the Euclidean inner product, which will be denoted $(\cdot, \cdot)$.
The Frobenius inner product and norm are denoted $(\bA,\bB)_F = \Tr(\bA\bB)$ and $\|\bA\|_F^2 = (\bA,\bA)_F = \Tr(\bA^2)$ for symmetric matrices $\bA,\bB \in \bbR^{N\times N}$.

We will use boldface symbols for vector- and matrix-valued variables in $\bbR^N$ and $\bbR^{N\times N}$, and plain symbols for finite-dimensional variables.

We use standard asymptotic notation: $f = O(g)$, $g = \Omega(f)$, $f\lesssim g$ all mean that $f \le Cg$ for a universal constant $C$; $f = \Theta(g)$ and $f \asymp g$ mean $f \lesssim g \lesssim f$; and $f = o(g)$, $f \ll g$ mean $f/g \rightarrow 0$ as $N\rightarrow\infty$.
All estimates are for $N$ sufficiently large.
Throughout, $c,C$ denote small and large universal constants that may change from line to line.
We take $\eps = 0.01$ to be a small explicit constant.

\subsection*{Acknowledgements}

Part of this work was completed while the authors were visiting Peking University.
We are grateful to Sourav Chatterjee, Wei-Kuo Chen, Jason Prodromidis, and Mark Sellke for helpful feedback on the manuscript, and Shuyang Gong, Zhangsong Li, Kevin Luo, Jason Prodromidis, Tselil Schramm, and Mark Sellke for motivating conversations.
HD was partially supported by an NSF-Simons research collaboration grant (award number 2031883).
BH was supported by a Stanford Science Fellowship and an NSF Mathematical Sciences Postdoctoral Fellowship.

\section{Proof overview}
\label{s:overview}

In \S\ref{ss:critical-reweighting} we explained that our proof is based on estimating second moments reweighted by $Z^\sph_N$.
In this section, we explain how these reweighted moments are computed, why the critical overlap scale is $N^{-1/3}$, and how the resulting estimates yield Theorems~\ref{t:main}--\ref{t:overlap} and \ref{t:compare-cube-sphere}.
We focus on the estimates:
\baln\label{e:overview-goal-compare}
   \bbE[(X_N-1)^2] \lesssim N^{-1/3}\,,\qquad\qquad\qquad\ \ \\
\label{e:overview-goal-overlap}
  \bbE \la R_{1,2}^2 \ra \lesssim N^{-2/3}\,, \qquad\qquad\qquad\qquad
  \bbE \la R_{1,2}^4 \ra \lesssim N^{-4/3}\,,
\ealn
where $X_N = Z_N / Z^\sph_N$.
The first estimate is Theorem~\ref{t:compare-cube-sphere}.
The second pair is a weaker version of Theorem~\ref{t:overlap}\ref{i:overlap-ub}, and the exponential moment bound in Theorem~\ref{t:overlap}\ref{i:overlap-ub} will follow from similar ideas as it.

It is a well-established fact in the free energy fluctuations literature \cite{chatterjee2009disorder,chen2019order,dey2026fluctuations} that suitably sharp upper bounds on annealed overlap moments imply (two-sided) bounds on the free energy variance.
In particular, the method of \cite{dey2026fluctuations} (building on \cite{chatterjee2009disorder,talagrand2011mean2}) shows that \eqref{e:overview-goal-overlap} implies Theorem~\ref{t:main}\ref{i:main-var}, the desired estimate on $\Var(F_N)$ at criticality.
This section is organized as follows.
\begin{itemize}
    \item In \S\ref{ss:reweighted-overlap}, we state Lemma~\ref{l:reweighted-2mt}, the key reweighted overlap identity, which expresses $\bbE[X_N^2 \la g(R_{1,2}) \ra]$ as a one-dimensional expectation over the overlap $q$ of a function $J(q)$.
    We also explain how \eqref{e:overview-goal-compare}, \eqref{e:overview-goal-overlap} both reduce to estimating reweighted moments of this form.

    \item In \S\ref{ss:dominant-contribution}, we explain how localization of this one-dimensional integrand on the scale
    $|q|\lesssim N^{-1/3}$ implies \eqref{e:overview-goal-compare} and a reweighted version \eqref{e:overview-goal-overlap-reweight} of \eqref{e:overview-goal-overlap}, from which \eqref{e:overview-goal-overlap} follows.

    \item In \S\ref{ss:bbp}, we explain why the localization scale is $|q| \lesssim N^{-1/3}$.
    Heuristically, this comes from the BBP edge transition.
    The formal proof avoids a full BBP computation, and instead uses log-concavity considerations to reduce to a one-point lower bound on $J(0)$.

    \item In \S\ref{ss:organization}, we summarize the organization of the rest of the paper.
\end{itemize}

\subsection{A one-dimensional identity for reweighted overlaps}
\label{ss:reweighted-overlap}

We next explain how the goals \eqref{e:overview-goal-compare}, \eqref{e:overview-goal-overlap} reduce to estimating reweighted overlap moments of the form $\bbE [X_N^2 \la g(R_{1,2})\ra]$, and then present Lemma~\ref{l:reweighted-2mt}, our central algebraic identity for these reweighted moments.
First, note that $Z^\sph_N(\bW)$ is an average of $Z_N(\bW)$ over orthogonal rotations of $\bW$, and thus $\bbE[X_N] = 1$.
So,
\beq\label{e:overview-goal-compare-step1}
  \bbE[(X_N-1)^2] = \bbE[X_N^2] - 1\,,
\eeq
and \eqref{e:overview-goal-compare} reduces to showing $\bbE[X_N^2] = 1 + O(N^{-1/3})$.
Furthermore, the estimates
\baln\label{e:overview-goal-overlap-reweight}
  \bbE [X_N^2 \la R_{1,2}^2 \ra] &\lesssim N^{-2/3}\,, &
  \bbE [X_N^2 \la R_{1,2}^4 \ra] &\lesssim N^{-4/3}
\ealn
will imply the goal \eqref{e:overview-goal-overlap} after controlling the lower tail of $X_N$.
The reweighting by $X_N^2$ is useful because the reweighted overlap moments have an explicit one-dimensional representation.
This is given by the following lemma, proved in \S\ref{ss:K-properties} by rotational invariance and Gaussian change of measure.
\begin{lem}
  \label{l:reweighted-2mt}
  For $\bx,\by \in S_N$ with $R(\bx,\by) = q$, define
  \beq\label{e:def-J}
    J(q) = \bbE\lt[\fr{e^{H_N(\bx) + H_N(\by)}}{(Z^\sph_N)^2}\rt]\,.
  \eeq
  Note that by rotational invariance of $\bW$, this expectation depends on $\bx,\by$ through their overlap $q = R(\bx,\by)$, so the notation $J(q)$ is justified.
  Then, for $\be_1,\be_2$ the first two (unit) standard basis vectors,
  \baln\label{e:K}
    J(q) &= e^{Nq^2/2} \cdot e^{N/2} K(q)\,, & 
    &\text{where}&
    K(q) &= \bbE\lt[Z^\sph_N\lt(\bW + (1+q) \be_1\be_1^\top + (1-q) \be_2\be_2^\top\rt)^{-2}\rt]\,,
  \ealn
  For $\bbE_q$ denoting expectation over $q = R(\bx,\by)$ for i.i.d. samples $\bx,\by \sim \unif(\Sigma_N)$, and measurable $g$,
  \beq\label{e:reweighted-2mt}
    \bbE[X_N^2 \la g(R_{1,2}) \ra] = \bbE_q [J(q) g(q)] = \bbE_q[e^{Nq^2/2} \cdot e^{N/2}K(q) \cdot g(q)]\,.
  \eeq
  Finally, for $\bbE^\sph_q$ denoting expectation over $q = R(\bx,\by)$ for i.i.d. samples $\bx,\by \sim \unif(S_N)$,
  \beq\label{e:reweighted-sph-2mt}
    \bbE[\la g(R_{1,2}) \ra^\sph] = \bbE^\sph_q [J(q) g(q)] = \bbE^\sph_q[e^{Nq^2/2} \cdot e^{N/2}K(q) \cdot g(q)]\,.
  \eeq
\end{lem}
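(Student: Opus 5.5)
Three things need proof: that $J$ depends only on $q$, the change-of-measure formula \eqref{e:K}, and the two overlap identities \eqref{e:reweighted-2mt}, \eqref{e:reweighted-sph-2mt}.

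\textbf{Step 1: reduction to a function of $q$.} For any orthogonal $\bQ$ we have $\bQ\bW\bQ^\top \stackrel{d}{=} \bW$. Moreover $Z^\sph_N(\bQ\bW\bQ^\top) = Z^\sph_N(\bW)$, because $\nu_N$ is rotation invariant. Hence $J$ is invariant under $(\bx,\by)\mapsto(\bQ\bx,\bQ\by)$, and so depends only on $(\bx,\by)/N$. We may therefore take
\[
  \bx = \sqrt{N}\lt(\sqrt{\tfrac{1+q}{2}}\,\be_1+\sqrt{\tfrac{1-q}{2}}\,\be_2\rt), \qquad
  \by = \sqrt{N}\lt(\sqrt{\tfrac{1+q}{2}}\,\be_1-\sqrt{\tfrac{1-q}{2}}\,\be_2\rt),
\]
which gives $\bx\bx^\top+\by\by^\top = N(1+q)\be_1\be_1^\top + N(1-q)\be_2\be_2^\top$.

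\textbf{Step 2: Gaussian change of measure.} Write
\[
  H_N(\bx)+H_N(\by) = \tfrac12\big(\bW,\bM\big)_F, \qquad \bM = \bx\bx^\top+\by\by^\top .
\]
The GOE density is proportional to $\exp(-\tfrac N4\|\bW\|_F^2)$, so $\bbE[e^{(\bW,\bA)_F}f(\bW)] = e^{\|\bA\|_F^2/N}\,\bbE f(\bW+\tfrac2N\bA)$. Taking $\bA=\bM/2$ gives the exponent
\[
  \frac{\|\bM\|_F^2}{4N} = \frac{N}{4}\big((1+q)^2+(1-q)^2\big) = \frac{N}{2}(1+q^2).
\]
This is exactly $e^{N/2}e^{Nq^2/2}$, and the shift is $\bM/N = (1+q)\be_1\be_1^\top+(1-q)\be_2\be_2^\top$. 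Applying this with $f = (Z^\sph_N)^{-2}$ yields \eqref{e:K}. One should check integrability: $Z^\sph_N\ge e^{\beta\lambda_{\min}\cdot N/2}$, which has Gaussian tails, so $(Z^\sph_N)^{-2}$ has all moments.

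\textbf{Step 3: the overlap identities.} Expand
\[
  X_N^2\la g(R_{1,2})\ra = 4^{-N}\sum_{\bx,\by\in\Sigma_N} e^{H_N(\bx)+H_N(\by)}\,g(R(\bx,\by))\,(Z^\sph_N)^{-2}.
\]
Here $Z_N^2$ cancels against the Gibbs normalization, and $\Sigma_N\subset S_N$. Taking $\bbE$ term by term (all terms are nonnegative when $g\ge0$; for general $g$ split into positive and negative parts, assuming integrability) and applying Step 1 gives $\bbE_q[J(q)g(q)]$. The spherical identity \eqref{e:reweighted-sph-2mt} is identical, with the sum replaced by $\int\!\!\int \de\nu_N\,\de\nu_N$ and Fubini.

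The only step needing care is the change of measure with the correct GOE normalization (off-diagonal variance $1/N$, diagonal $2/N$). This normalization is what makes the shift exactly $\bM/N$ and the constant exactly $\|\bM\|_F^2/(4N)$. The rest is bookkeeping.
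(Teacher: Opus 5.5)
Your proof is correct and follows essentially the same route as the paper. You use a Gaussian change of measure with the GOE density $\propto e^{-\frac N4\|\bW\|_F^2}$, which produces the factor $e^{\|\bM\|_F^2/(4N)}=e^{N/2}e^{Nq^2/2}$ and the shift $\bM/N$. You use rotational invariance of both $\bW$ and $Z^\sph_N$ to reduce to $\be_1,\be_2$. You then expand $X_N^2\la g(R_{1,2})\ra$ (respectively $\la g(R_{1,2})\ra^\sph$) over pairs of replicas and take expectations termwise. The only differences are cosmetic: you rotate before changing measure rather than after, and you add an explicit integrability check that the paper leaves implicit.
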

\begin{rmk}
  The factor $e^{Nq^2/2}$ in \eqref{e:reweighted-2mt} will cancel the Gaussian curvature of $\bbE_q$ near $0$, and is analogous to the factor $e^{N\beta^2q^2/2}$ in \eqref{e:high-temp-2mt-ratio}.
  Compared to \eqref{e:high-temp-2mt-ratio}, the additional factor $K(q)$ prevents \eqref{e:reweighted-2mt} from diverging.
\end{rmk}

\subsection{Optimal-scale localization implies the critical exponents}
\label{ss:dominant-contribution}

In \S\ref{ss:bbp}, we explain how the function $K(q)$ exponentially decays as $q$ varies away from $0$, making the main contribution to \eqref{e:reweighted-2mt} for any polynomial $g$ come from scale $|q| \lesssim N^{-1/3}$.
In this subsection, we first explain how this leads to the scalings in \eqref{e:overview-goal-compare}, \eqref{e:overview-goal-overlap-reweight}.
Taking $g=1$ in \eqref{e:reweighted-2mt}, \eqref{e:reweighted-sph-2mt} gives
\baln\label{e:compare-XN2-to-1}
  \bbE[X_N^2] &= \bbE_q[e^{Nq^2/2} \cdot e^{N/2}K(q)]\,, &
  1 &= \bbE^\sph_q[e^{Nq^2/2} \cdot e^{N/2}K(q)]\,.
\ealn
Thus \eqref{e:overview-goal-compare} reduces to a sphere-to-cube comparison problem: the spherical expectation of the localized integrand is exactly $1$, and we must show that replacing $\bbE^\sph_q$ with $\bbE_q$ changes the value by $O(N^{-1/3})$.
Under $\bbE_q$, $q$ is sampled from a discrete probability measure on $\cQ_N = \{-1,-1+\fr{2}{N},\ldots,1-\fr{2}{N},1\}$ with mass
\beq\label{e:ising-mass}
  p(q) = 2^{-N} \binom{N}{N\cdot \fr{1+q}{2}}
  \,\,\propto\,\, \exp\lt(-\fr{N}{2} q^2 + O(q^2) - \Theta(Nq^4)\rt)\,.
\eeq
Under $\bbE^\sph_q$, $q$ is sampled from the probability measure on $[-1,1]$ with density
\beq\label{e:sphere-density}
  \rho(q) = \fr{\Gamma(N/2)}{\sqrt{\pi}\Gamma((N-1)/2)}  (1-q^2)^{(N-3)/2}
  \,\,\propto\,\, \exp\lt(-\fr{N}{2} q^2 + O(q^2) - \Theta(Nq^4)\rt)\,.
\eeq
At the effective scale $|q| \lesssim N^{-1/3}$, the error terms in \eqref{e:ising-mass}, \eqref{e:sphere-density} are each $O(N^{-1/3})$.
As $K$ is regular enough to compare the discrete and continuous expectations, this implies $\bbE[X_N^2] = 1 + O(N^{-1/3})$, which shows \eqref{e:overview-goal-compare}.
Furthermore, \eqref{e:reweighted-2mt} implies
\baln
  \bbE[X_N^2] &= \bbE_q[J(q)]\,, & 
  \bbE[X_N^2 \la R_{1,2}^{2k} \ra] &= \bbE_q[J(q) q^{2k}]\,.
\ealn
Since $\bbE[X_N^2] \asymp 1$, and the main contribution to these expectations comes from $|q| \lesssim N^{-1/3}$, \eqref{e:overview-goal-overlap-reweight} follows.

\subsection{Why localization occurs at the BBP scale}
\label{ss:bbp}

The scaling $N^{-1/3}$ comes from the BBP edge transition.
In the definition \eqref{e:K} of $K$, the matrix
\[
  \bW + (1+q) \be_1\be_1^\top + (1-q) \be_2\be_2^\top
\]
is a GOE matrix with two spikes, which are exactly critical when $q=0$.
As we vary $q$ away from $0$, the larger spike becomes supercritical, and the edge eigenvalue process enters the BBP critical window when $|q|\asymp N^{-1/3}$ \cite{baik2005phase,peche2006largest,bloemendal2013limits,bloemendal2016limits}.
From such considerations, and the aforementioned contour integral formula for $Z^\sph_N$ (Lemma~\ref{lem:contour-integral-1}), we expect
\beq\label{e:K-true-asymptotic}
  \log K(q) \approx \log K(0) - CN|q|^3 + \text{lower order}\,.
\eeq
Since the $e^{Nq^2/2}$ in \eqref{e:reweighted-2mt} exactly cancels the leading $\exp(-\fr{N}{2}q^2)$ in \eqref{e:ising-mass}, the decay rate \eqref{e:K-true-asymptotic} of $K$ ensures the main contribution to \eqref{e:reweighted-2mt} is from $|q| \lesssim N^{-1/3}$.

Proving \eqref{e:K-true-asymptotic} amounts to an explicit, though delicate, random matrix calculation that we will not attempt in this paper.
In our formal proof, we use a softer strategy to show the main contribution to \eqref{e:reweighted-2mt} comes from $|q| \lesssim N^{-1/3}$, in order to reduce the random matrix inputs required.
Our proof is based on showing that:
\begin{enumerate}[label=(\roman*)]
  \item \label{i:J-of-zero} $J(0) \gtrsim N^{-1/6}$ (Proposition~\ref{p:J-of-zero}); and 
  \item \label{i:K-log-concave} $K$ is even and log-concave (Lemma~\ref{l:K-log-concave}).
\end{enumerate}
The point is that for $\rho(q)$ the spherical density defined in \eqref{e:sphere-density}, the function
\[
  \psi(q) = \rho(q) J(q) = \rho(q) \cdot e^{Nq^2/2} \cdot e^{N/2} K(q)
\]
integrates to $1$ by \eqref{e:compare-XN2-to-1}, and is essentially log-concave by \ref{i:K-log-concave}.
Since $\rho(0) \asymp N^{1/2}$, the lower bound \ref{i:J-of-zero} implies $\psi(0) \gtrsim N^{1/3}$.
An even log-concave probability density with central value $h$ has tails at scale $h^{-1}$, which implies $\psi$ is localized on $|q| \lesssim N^{-1/3}$.

Input \ref{i:J-of-zero} is the only place in the proof where detailed random matrix estimates enter.
However, the random matrix theory needed is far simpler than for a complete proof of the BBP asymptotic \eqref{e:K-true-asymptotic}, as we just need a one-point estimate at $q=0$.
The fact that we only need a lower bound on $J(0)$ also simplifies the proof, as we do not need to control low-probability contributions to the expectation \eqref{e:def-J} defining $J$.

\begin{rmk}\label{r:critical-window-extension}
  Consider $\beta$ in the critical window $\beta = 1 + b N^{-1/3} \sqrt{\log N}$, and let $X_{N,\beta} = Z_{N,\beta} / Z^\sph_{N,\beta}$.
  We outline here what is needed to extend the distributional limit \eqref{e:spherical-critical-clt} to the SK model by proving
  \beq\label{e:critical-window-extension-goal}
    \bbE[(X_{N,\beta}-1)^2] \ll 1\,.
  \eeq
  For analogous $J_\beta(q)$, $K_\beta(q)$, we have similarly to \eqref{e:overview-goal-compare-step1}, \eqref{e:compare-XN2-to-1}
  \beq\label{e:critical-window-extension}
    \bbE[(X_{N,\beta}-1)^2]
    = \bbE[X_{N,\beta}^2] - 1
    = \bbE_q[e^{N\beta^2 q^2/2} \cdot e^{N\beta^2/2} K_\beta(q)] - \bbE^\sph_q[e^{N\beta^2 q^2/2} \cdot e^{N\beta^2 /2} K_\beta(q)]\,.
  \eeq
  This is $o(1)$ provided the main contribution to \eqref{e:critical-window-extension} is from $|q| \ll N^{-1/4}$.
  For $b<0$, this can be shown by the same strategy as above.
  However, when $b>0$ the proof strategy based on \ref{i:J-of-zero}--\ref{i:K-log-concave} no longer works: the leading $\exp(-\fr{N}{2}q^2)$ in \eqref{e:ising-mass}, \eqref{e:sphere-density} no longer fully cancels the factor $e^{N\beta^2 q^2/2}$. Then
  \[
    \psi_\beta(q) = \rho(q) J_\beta(q) = \rho(q) \cdot e^{N\beta^2 q^2/2} \cdot e^{N\beta^2 /2} K_\beta(q)
  \]
  is no longer clearly log-concave, so a lower bound on $J_\beta(0)$ does not control the scale of the main contribution.
  We expect \eqref{e:critical-window-extension-goal} is still true, as BBP edge considerations similar to \eqref{e:K-true-asymptotic} suggest  
  \beq\label{e:Kbeta-critical-window}
    \log K_\beta(q) = \log K_\beta(0) - CN \lt[
      (\beta - 1 + q)_+^3 + (\beta - 1 - q)_+^3 - 2(\beta-1)_+^3 
    \rt] + \text{lower order terms}\,.
  \eeq
  If this decay rate can be proven, it overcomes the $e^{N(\beta^2-1) q^2/2}$ remaining from above, making the main contribution to \eqref{e:critical-window-extension} come from $|q| \lesssim N^{-1/3} \sqrt{\log N}$.
  This implies \eqref{e:critical-window-extension-goal}.
  However, a proof of \eqref{e:Kbeta-critical-window} would require random matrix inputs well beyond the current paper, and we leave this as a question for future work. 
  Similar heuristics suggest \eqref{e:critical-window-extension-goal} holds for $\beta$ beyond the critical window, to at least $\beta = 1 + o(N^{-1/4})$.
\end{rmk}

\subsection{Organization of the proof}
\label{ss:organization}

The rest of the paper is structured as follows.
\begin{itemize}
  \item In \S\ref{s:comparison}, we assume Proposition~\ref{p:J-of-zero}, which states that $J(0) \gtrsim N^{-1/6}$.
  Under this assumption, we prove Theorem~\ref{t:compare-cube-sphere}, Corollary~\ref{c:sphere}\ref{i:sphere-overlap-ub}.
  We also prove Proposition~\ref{p:reweight-annealed-overlap-ub}, a preliminary version of Theorem~\ref{t:overlap}\ref{i:overlap-ub}, which implies it after controlling the lower tail of $X_N$.
  These are proved by the sphere to cube comparison argument alluded to above.

  \item In \S\ref{s:rmt}, we prove Proposition~\ref{p:J-of-zero}.
  This step is the only part of the paper that uses critical-edge random matrix estimates.

  \item In \S\ref{s:variance}, we complete the proof of Theorem~\ref{t:main}.
  The main new input is Proposition~\ref{p:overlap-ub-24mt}, which provides the asymptotically sharp upper bound \eqref{e:overview-goal-overlap} on $\bbE \la R_{1,2}^2\ra$ and $\bbE \la R_{1,2}^4\ra$.
  It is proved by combining the estimates \eqref{e:overview-goal-overlap-reweight} with control of the lower tail of $X_N$ via a concentration inequality due to Chen \cite{chen2023gaussian}.
  This controls $\Var(F_N)$ through an argument from \cite{dey2026fluctuations} based on Talagrand's cavity method and an integral formula for $\Var(F_N)$ due to Chatterjee \cite{chatterjee2009disorder}.
  We also derive Theorem~\ref{t:overlap}\ref{i:overlap-lb} and Corollary~\ref{c:sphere}\ref{i:sphere-overlap-lb} as byproducts of this proof.

  \item In \S\ref{s:conclusion}, we complete the proofs of Theorem~\ref{t:overlap}\ref{i:overlap-ub} and Corollary~\ref{c:sphere}\ref{i:sphere-var}.
\end{itemize}

\section{Critical reweighting and the sphere to cube comparison}
\label{s:comparison}

From here on we let $J$ and $K$ be the functions defined in Lemma~\ref{l:reweighted-2mt}.
Recall $X_N = Z_N / Z^\sph_N$.
In this section we assume the following.
\begin{ppn}[Proved in \S\ref{s:rmt}]\label{p:J-of-zero}
  We have $J(0) \gtrsim N^{-1/6}$.
\end{ppn}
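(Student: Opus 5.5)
The plan is to pass from $J(0)$ to a statement about $\bbE\log Z^\sph_N$ of a critically spiked GOE, using Jensen's inequality, and then bound that expectation through the contour integral representation of $Z^\sph_N$ (Lemma~\ref{lem:contour-integral-1}).

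\textbf{Step 1: reduction by Jensen.} Set $\bW' = \bW + \be_1\be_1^\top + \be_2\be_2^\top$, the two-spike matrix appearing in $K(0)$. Since $\bbE Z^\sph_N = e^{N/4}$, Lemma~\ref{l:reweighted-2mt} at $q=0$ gives
\[
  J(0) = e^{N/2}K(0) = \bbE\Big[\big(Z^\sph_N(\bW')/e^{N/4}\big)^{-2}\Big].
\]
Jensen's inequality applied to $t\mapsto e^{-2t}$ then yields
\[
  J(0) \ge \exp\Big(-2\,\bbE\big[\log Z^\sph_N(\bW') - N/4\big]\Big).
\]
It therefore suffices to prove the upper bound
\[
  \bbE \log Z^\sph_N(\bW') \le \fr N4 + \fr1{12}\log N + O(1).
\]
This reduction is convenient: only an upper bound on $\log Z^\sph_N$ is needed, so rare events can be handled crudely. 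For instance, $\log Z^\sph_N \le N\|\bW'\|_{op}/2$, and $\|\bW\|_{op}$ has Gaussian tails above $2$. The exponent $1/12$ is also what the heuristics predict: the unspiked model has $\bbE F^\sph_N = N/4 - \fr1{12}\log N + O(1)$, and two critical spikes should add $\fr16\log N$.

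\textbf{Step 2: contour upper bound.} Write $Z^\sph_N(\bW')$ via the contour integral, roughly $C_N\int_{\gamma - i\infty}^{\gamma+i\infty} \exp\big(\fr{N z}{2} - \fr12\sum_i \log(z-\lambda_i')\big)\,dz$, where $\lambda_i'$ are the eigenvalues of $\bW'$ and $C_N$ is an explicit normalizing constant. Take the vertical contour at $\Re z = \gamma = 2 + \kappa$ with $\kappa = N^{-1/3}$; this is the critical saddle distance at $\beta = 1$. On the good event $\lambda'_{\max} \le 2 + \kappa/2$, bound the integrand by its modulus. Using $|z-\lambda_i'| \ge \gamma - \lambda_i'$, together with the extra decay in $\Im z$ coming from the $N$ factors, the integral over $\Im z$ contributes only a polynomial factor. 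Matching the normalization against the unspiked computation (as in \cite{landon2022free,baik2016fluctuations}) gives
\[
  \log Z^\sph_N(\bW') \le \log Z_{\rm bound}(\bW) - \fr12 \log\fr{\det(\gamma - \bW')}{\det(\gamma-\bW)} + O(1),
\]
where $\log Z_{\rm bound}(\bW)$ is the same contour bound for the unspiked matrix. Its expectation is $N/4 - \fr1{12}\log N + O(1)$, by the standard linear eigenvalue statistics estimate at distance $N^{-1/3}$ from the edge.

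\textbf{Step 3: the spike determinant (main obstacle).} Let $G(\gamma) = (\gamma - \bW)^{-1}$ and let $G_{\{1,2\}}$ denote its $2\times 2$ block in the coordinates $\be_1,\be_2$. By the matrix determinant lemma,
\[
  \fr{\det(\gamma-\bW')}{\det(\gamma-\bW)} = \det\big(I_2 - G_{\{1,2\}}(\gamma)\big).
\]
The isotropic local law at spectral distance $\kappa = N^{-1/3}$ gives $G_{ij}(\gamma) = m(\gamma)\delta_{ij} + O(N^{-1/2+\eps}\kappa^{-1/4})$, with overwhelming probability. Here $1 - m(\gamma) \asymp \sqrt\kappa = N^{-1/6}$, while the error is $O(N^{-5/12+\eps}) \ll N^{-1/6}$. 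Hence the determinant is $\asymp N^{-1/3}$, and the spike term contributes $-\fr12\log N^{-1/3} = \fr16\log N + O(1)$. Summing with Step 2 gives the target bound $\fr N4 + \fr1{12}\log N + O(1)$.

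The delicate points are the following:
\begin{itemize}
  \item keeping the $O(1)$ precision in Step 2 uniformly, which requires carefully justifying the modulus bound on the contour and the $\Im z$ integration;
  \item showing the bad events (an eigenvalue of $\bW'$ above $2+\kappa/2$, or failure of the local law) contribute $O(1)$ to $\bbE\log Z^\sph_N(\bW')$.
\end{itemize}
The second point follows from Cauchy--Schwarz, combining the crude bound $\log Z^\sph_N \lesssim N(1+\|\bW\|_{op})$ with the fact that these events have probability $\le e^{-N^{c}}$. This holds because at $\kappa = N^{-1/3} \gg N^{-2/3}$ even the spiked top eigenvalue lies below $\gamma$ except with such probability, by BBP rigidity at a critical spike.
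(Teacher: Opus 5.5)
There is a genuine gap: the inequality you reduce to in Step~1 is false, and Step~2 contains an independent error that hides this. Write $X=\log Z^\sph_N(\bW')-N/4$. Since $\bW'$ is a rank-two perturbation of $\bW$, $X$ should inherit the Gaussian fluctuations of order $\sqrt{\log N}$ of the spherical free energy. These come from the log-correlated statistic $-\fr12\sum_k\log(\gamma-\lambda_k)$ over all mesoscopic scales, which two spikes do not affect. Hence the Jensen gap $\bbE e^{-2(X-\bbE X)}$ diverges; for variance $\fr16\log N$ it is about $e^{2\Var X}=N^{1/3}$. In fact $J(0)\asymp N^{-1/6}$ (the matching upper bound is Corollary~\ref{c:J-of-zero-ub}), so your target $\bbE X\le\fr1{12}\log N+O(1)$ cannot hold. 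Heuristically, the planting identity $\bbE f(\bW')=e^{-N/2}\bbE[f(\bW)Z^\sph_{N,2}(\bW)]$ for rotation-invariant $f$, together with $Z^\sph_{N,2}e^{-N/2}\approx N^{-1/6}e^{2(\log Z^\sph_N-N/4)}$, tilts the critical CLT $\cN(-\fr1{12}\log N,\fr16\log N)$ by $e^{2x}$. This gives $\bbE X\approx\fr14\log N$, from which Jensen yields only $J(0)\gtrsim N^{-1/2}$. The expectation $\bbE e^{-2X}$ is carried by the moderate-deviation lower tail $X\approx-\fr1{12}\log N$, which Jensen discards.

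Step~2 reaches the value $\fr N4+\fr1{12}\log N$ only because the contour is misplaced. At $\beta=1$ the saddle of $G_\bLambda$ lies at distance $\asymp N^{-2/3}$ above $\lambda_1$, not $N^{-1/3}$. Indeed $G_\bLambda'(2+\kappa)\approx\sqrt\kappa$, and this is balanced by the top-eigenvalue term $1/(N\kappa)$ only when $\kappa\asymp N^{-2/3}$. Bounding the integrand by its modulus on $\Re z=2+N^{-1/3}$ therefore produces $e^{\fr N2G_\bLambda(2+N^{-1/3})}$, which exceeds the saddle value by about $\exp(\fr N3\kappa^{3/2})=\exp(\fr13N^{1/2})$. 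So $\bbE\log Z_{\mathrm{bound}}(\bW)=\fr N4+\Theta(N^{1/2})$, not $\fr N4-\fr1{12}\log N+O(1)$. Your $\fr16\log N$ spike term is likewise an artifact of evaluating the determinant at the wrong point. At the true saddle the entries of $I_2-G_{\{1,2\}}$ are of order $N^{-1/3}$, so the spikes contribute about $\fr13\log N$, consistent with $\fr14\log N$ above.

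The missing idea is never to take logarithms of a quantity with diverging fluctuations. The paper writes $J(0)=\bbE[Z^\sph_{N,2}/(Z^\sph_N)^2]$. In the two contour representations the bulk factor $e^{NG_\bLambda(\gamma)}$ cancels exactly at the true saddle, leaving $\fr{C_{N,2}}{C_N^2}\cdot\fr{I_{\bLambda,2}}{I_\bLambda^2}$, a ratio of densities of explicit Gaussian quadratic forms at their means. This is a purely edge quantity, of size $N^{1/2}\cdot N^{-2}/N^{-4/3}=N^{-1/6}$ on an event of probability bounded below. Positivity of the ratio then gives the bound with no Jensen step.
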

This section is devoted to the proofs of Theorem~\ref{t:compare-cube-sphere}, Corollary~\ref{c:sphere}\ref{i:sphere-overlap-ub}, and the following proposition.
\begin{ppn}\label{p:reweight-annealed-overlap-ub}
  There exists a universal constant $c>0$ such that
  \[
    \bbE\lt[
      X_N^2 \la \exp(cN^{1/2}J(0) |R_{1,2}|) \ra
    \rt] \le 2\,.
  \]
\end{ppn}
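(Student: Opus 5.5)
By \eqref{e:reweighted-2mt} with $g(q)=\exp(a|q|)$, where $a = cN^{1/2}J(0)$, the left-hand side equals $\bbE_q[J(q)e^{a|q|}]$. So the task is to show the discrete measure $p(q)J(q)$ on $\cQ_N$ has exponential tails at scale $1/(N^{1/2}J(0))$. I will first prove this for the spherical analogue $\psi(q)=\rho(q)J(q)$, and then compare the discrete and continuous versions.

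\textbf{Step 1 (log-concavity of the spherical integrand).} Write
\[
\psi(q) = \rho(q)e^{Nq^2/2}e^{N/2}K(q).
\]
By \eqref{e:compare-XN2-to-1}, $\psi$ is a probability density on $[-1,1]$. I will use that $K$ is even and log-concave; this is the item (ii) announced in \S\ref{ss:bbp}, and I would prove it via Pr\'ekopa--Leindler applied to the Gaussian expectation in \eqref{e:K}, after writing $Z^\sph_N(\cdot)^{-2}$ in a jointly log-concave form in the spike parameter. The term $\log\rho(q)+Nq^2/2$ equals $\fr{N-3}{2}\log(1-q^2)+\fr N2 q^2$ up to a constant. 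Its second derivative is at most $3+O(Nq^2)$ near $0$ and becomes negative beyond $|q|\gtrsim N^{-1/2}$. So $\psi$ is log-concave up to a bounded perturbation: on $|q|\le 1/2$ we can write $\psi=e^{\phi}\cdot \theta$ with $\phi$ even and concave and $\log\theta$ bounded. Alternatively, I could simply absorb the factor $e^{O(q^2)}$, which is harmless at the relevant scales.

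\textbf{Step 2 (tail bound).} We have $\psi(0)=\rho(0)J(0)\asymp N^{1/2}J(0)=:h$. For an even log-concave density $f$ with $f(0)=h$, concavity of $\log f$ together with $\int f=1$ forces $f(q)\le h\,e^{-c h|q|}$ for $|q|\ge C/h$. The reason is that if $f(t)\ge h/e$ then $f\ge h/e$ on $[-t,t]$, so $t\le e/(2h)$; once $f$ has dropped by a factor $e$, concavity propagates geometric decay. Applying this to $\psi$ (with the bounded perturbation) gives $\int \psi(q)e^{2a|q|}\,dq \le 1+o(1)$ for $c$ small. Proposition~\ref{p:J-of-zero} gives $h\gtrsim N^{1/3}\gg N^{1/4}$, so the relevant scale $1/h\lesssim N^{-1/3}$ sits where the quartic corrections in \eqref{e:ising-mass}, \eqref{e:sphere-density} are small. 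In the far region $|q|\ge 1/2$, $\psi$ is exponentially small in $N$ because of the decay, so it is negligible even after multiplying by $e^{a|q|}\le e^{O(N^{1/2})}$.

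\textbf{Step 3 (sphere to cube comparison).} This is the main obstacle. By \eqref{e:ising-mass} and \eqref{e:sphere-density}, $p(q)\le C\cdot\fr2N\rho(q)e^{O(q^2)}$ uniformly for $|q|\le 1/2$, since both have the form $\exp(-Nq^2/2+O(q^2)-\Theta(Nq^4))$. Actually, $p(q)/(\fr2N\rho(q))$ is $e^{O(q^2)-\Theta(Nq^4)}$ with possibly different quartic constants, so I would check that the cube's quartic term is at least as negative as the sphere's. From the Taylor expansions, $\log\binom{N}{\cdot}$ gives $-Nq^4/12$ while $(1-q^2)^{N/2}$ gives $-Nq^4/4$. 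The cube is therefore less negative, and I would instead bound the ratio by $e^{O(Nq^4)}$, which is absorbed by the exponential decay $e^{-ch|q|}$ as long as $Nq^4\ll h|q|$, i.e.\ $|q|\ll (h/N)^{1/3}$.

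The discrete sum must then be compared with the integral. Since $\psi$ is nearly log-concave, its value at a lattice point is at most a constant times its average over a neighbouring window of width $2/N$, because $h\ll N$ makes $\psi$ vary by $O(1)$ factors on that window. In the intermediate range $|q|\ge (h/N)^{1/3}$, I would use the direct decay of $J$ from log-concavity of $K$ together with the cube's $-\Theta(Nq^4)$ term. Combining these gives
\[
\bbE_q[J(q)e^{a|q|}]\le 1+O(\text{small})\le 2
\]
after shrinking $c$.
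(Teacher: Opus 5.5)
Your outline has the paper's architecture: the identity \eqref{e:reweighted-2mt}, log-concavity of $K$ via Pr\'ekopa--Leindler, a log-concave tail at rate $\asymp\psi(0)$, and a central/tail sphere-to-cube comparison. Your factorization $\psi=\tilde\psi\,e^{3q^2/2}$ with $\tilde\psi$ even log-concave is valid, because $\fr{N-3}{2}(\log(1-q^2)+q^2)$ is concave on $(-1,1)$. On the spherical side it is a slightly cleaner substitute for the paper's flattened $\opsi$.

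The first gap is that you only use $f''\le 0$. Step 3 relies on $h=\rho(0)J(0)\ll N$, but log-concavity of $K$ gives no upper bound on $h$, since an even log-concave $K$ may be arbitrarily sharply peaked. Some such bound is genuinely needed. The lattice $\cQ_N$ has spacing $2/N$ and cannot resolve a spherical profile on scale $1/h\ll 1/N$: if $h\gg N$, then for even $N$ the single term $p(0)J(0)\asymp h/N$ already exceeds $2$. The paper supplies this bound through the other half of Lemma~\ref{l:K-log-concave}, namely $f''\ge -N$. That inequality comes from the Gaussian smoothing in $\bW$: $f''=-\fr N2\|\bDelta\|_F^2+\fr{N^2}{4}\Var_{\zeta_q}[(\bM,\bDelta)_F]$. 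It gives $|f'(q)|\le N|q|$, so $J$ is nondecreasing in $|q|$ and $\psi$ drops by at most a constant factor on $|q|\le 2N^{-1/2}$. Integrating then yields $J(0)\lesssim 1$ (Lemma~\ref{l:J-of-zero-ub-crude}), hence $h\lesssim N^{1/2}$.

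The second gap is that your window comparison ("at most a constant times its average") only gives $\bbE_q[J(q)e^{a|q|}]\le C$. So the claimed $1+O(\text{small})\le 2$ does not follow. What is needed is an asymptotically exact comparison on the central region $|q|\le q_\ub=La^{-1}\log N=o(N^{-1/4})$:
$p(q)J(q)e^{a|q|}=(1+o(1))\int_{q-1/N}^{q+1/N}\rho(s)J(s)e^{a|s|}\,\de s$.
The paper gets this from three inputs:
\begin{enumerate}
\item Lemma~\ref{l:cube-sphere-equivalence};
\item the Lipschitz bound $|f(q)-f(\tilde q)|\le |q|+N^{-1}$ for $|q-\tilde q|\le N^{-1}$, again from $f''\ge -N$;
\item $a/N=o(1)$.
\end{enumerate}
The tails then contribute only $o(1)$. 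You could instead prove $\le C$ and interpolate by H\"older, but that requires $\bbE[X_N^2]=1+o(1)$, i.e.\ Theorem~\ref{t:compare-cube-sphere}, which rests on the same sharp comparison.

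A smaller point concerns your far regime. It is $K$ that decays there, not $J$, which is nondecreasing in $|q|$. The decay of $e^{N/2}K$ must be read off the $\psi$-tail at an anchor where $\rho(q)e^{Nq^2/2}\asymp\sqrt N$, i.e.\ $q\ll N^{-1/4}$. Your split point $(h/N)^{1/3}\approx N^{-2/9}$ lies outside that range. Anchor at $q_\ub$ instead, and place the split where $Nq^4\asymp a|q|$, so that beyond it the cube's factor $e^{-Nq^4/12}$ dominates the weight $e^{a|q|}$.
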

In light of Proposition~\ref{p:J-of-zero}, this implies a reweighted version of Theorem~\ref{t:overlap}\ref{i:overlap-ub}.
We keep the dependence on $J(0)$ explicit for use in \S\ref{s:variance}, see Remark~\ref{r:J-of-zero-explicit-dependence}.

\subsection{Reweighted overlap identities and log-concavity}
\label{ss:K-properties}

We first present the deferred proof of Lemma~\ref{l:reweighted-2mt} and prove Lemma~\ref{l:K-log-concave}, that $K$ is even and log-concave.

\begin{proof}[Proof of Lemma~\ref{l:reweighted-2mt}]
  For any $\bx,\by \in S_N$ with $R(\bx,\by) = q$, a Gaussian change of measure calculation shows
  \[
    J(q) 
    = e^{Nq^2/2} \cdot e^{N/2} \bbE\lt[Z^\sph_N\lt(\bW + \fr{\bx\bx^\top + \by\by^\top}{N}\rt)^{-2}\rt]\,,
  \]
  and the conclusion \eqref{e:K} follows by rotational invariance of $\bW$.
  The estimate \eqref{e:reweighted-2mt} follows from
  \[
    \bbE[X_N^2 \la g(R_{1,2}) \ra]
    = \fr{1}{4^N} \sum_{\bx,\by \in \Sigma_N} g(R(\bx,\by)) \bbE\lt[\fr{e^{H_N(\bx) + H_N(\by)}}{(Z^\sph_N)^2}\rt]
    = \bbE_q [J(q) g(q)]\,.
  \]
  The estimate \eqref{e:reweighted-sph-2mt} follows similarly as
  \[
    \bbE[\la g(R_{1,2}) \ra^\sph]
    = \iint g(R(\bx,\by)) \bbE\lt[\fr{e^{H_N(\bx) + H_N(\by)}}{(Z^\sph_N)^2}\rt] \,\de \nu^{\otimes 2}_N(\bx,\by)
    = \bbE^\sph_q [J(q) g(q)]\,. \qedhere
  \]
\end{proof}

\begin{lem}\label{l:K-log-concave}
  The function $f(q) = \log K(q)$ is even and concave, with $-N \le f''(q) \le 0$ for all $q\in [-1,1]$.
\begin{proof}
  From the definition \eqref{e:K} of $K$ it is clear that $K$, and thus $f$, is even.
  Let
  \[
    \rho_{N,\GOE}(\bW) = \fr{1}{Z_{N,\GOE}} \exp\lt(-\fr{N}{4} \|\bW\|_F^2\rt)
  \]
  be the density of $\bW \sim \GOE(N)$ in the space $\Sym_N$ of symmetric $N\times N$ real matrices.
  Since the function $\Sym_N \ni \bM \mapsto Z^\sph_N(\bM)^{-2}$ is log-concave, so is
  \[
    [-1,1] \times \Sym_N \ni (q,\bW) 
    \mapsto Z^\sph_N\lt(\bW + (1+q) \be_1\be_1^\top + (1-q) \be_2\be_2^\top\rt)^{-2} \rho_{N,\GOE}(\bW)\,.
  \]
  By the Pr\'ekopa--Leindler theorem, this implies $K$ is log-concave, and thus $f$ is concave.
  To prove the final assertion $-N \le f''(q)$, we define $\bS = \be_1\be_1^\top + \be_2\be_2^\top$ and $\bDelta = \be_1\be_1^\top - \be_2\be_2^\top$.
  Then
  \[
    K(q) = \int_{\Sym_N}
    Z^\sph_N(\bM)^{-2} \rho_{N,\GOE}\lt(\bM - \bS - q\bDelta\rt)
    \,\de \bM\,.
  \]
  Let $\zeta_q$ be the probability measure on $\Sym_N$ with density
  \[
    \zeta_q(\bM) \,\,\propto\,\, Z^\sph_N(\bM)^{-2} \rho_{N,\GOE}\lt(\bM - \bS - q\bDelta\rt)\,.
  \]
  Then
  \[
    f'(q) = (\log K)'(q) = \fr{N}{2} \int (\bM - \bS - q\bDelta, \bDelta)_F \,\de \zeta_q
    = -\fr{N}{2} q\|\bDelta\|_F^2 + \fr{N}{2} \int (\bM, \bDelta)_F \,\de \zeta_q\,.
  \]
  Differentiating again yields
  \[
    f''(q) = -\fr{N}{2} \|\bDelta\|_F^2
    + \fr{N^2}{4} \Var_{\zeta_q}[(\bM, \bDelta)_F]
    \ge -\fr{N}{2} \|\bDelta\|_F^2 = -N\,. \qedhere
  \]
\end{proof}
\end{lem}
\begin{cor}\label{c:f-deriv}
  We have $f'(q) \in [-Nq,0]$ for all $q\in [0,1]$.
\begin{proof}
  Since $f$ is even, $f'(0) = 0$.
  The result is now immediate from Lemma~\ref{l:K-log-concave}.
\end{proof}
\end{cor}

\subsection{Log-concave localization of the overlap kernel}

This subsection presents the main mechanism for controlling tail contributions to the expectations in \eqref{e:reweighted-2mt}, \eqref{e:reweighted-sph-2mt}: for $\rho$ the spherical density defined in \eqref{e:sphere-density}, the function $\psi(q) = \rho(q) J(q)$ integrates to $1$, has central value $\psi(0) \asymp N^{1/2} J(0)$, and is essentially log-concave, so it has tails at scale $(N^{1/2} J(0))^{-1}$.
The main result of this subsection is Lemma~\ref{l:psi-tail-bound} below, which provides an exponential tail bound on $\psi$.
\begin{lem}\label{l:log-concave}
  Suppose $\mu$ is a probability measure on $\bbR$ with even and log-concave probability density $\rho$.
  Then for any $x\ge 0$, $\mu([x,+\infty)) \le \fr12 e^{-2\rho(0)x}$.
\begin{proof}
  Let $g(x) = \mu([x,+\infty))$ denote the upper tail of $\mu$.
  This is a marginal of the log-concave function $(x,y) \rightarrow \rho(x+y) \bone\{y\ge 0\}$ on $\bbR^2$, and so is log-concave by Pr\'ekopa--Leindler.
  Note that $g(0) = \fr12$ and
  \[
    (\log g)'(0) = -\fr{\rho(0)}{g(0)} = -2\rho(0)\,.
  \]
  By concavity of $\log g$ this implies
  \[
    \log g(x) \le \log g(0) - 2\rho(0) x \qquad \Rightarrow \qquad
    g(x) \le \fr12 e^{-2\rho(0) x}\,. \qedhere
  \]
\end{proof}
\end{lem}
\begin{lem}\label{l:psi}
  Recall $\rho(q)$ defined in \eqref{e:sphere-density}, and let
  \baln\label{e:psi}
    \psi(q) &= \rho(q) J(q)\,, & 
    \opsi(q) &= \psi(\max(|q|,q_\lb))\,,
  \ealn
  where $q_\lb=2N^{-1/2}$. 
  Then $\opsi$ is log-concave on $[-1,1]$, with 
  \balnn
    \opsi(0) &\gtrsim N^{1/2} J(0)\,, &
    \int_{-1}^1 \opsi(q)\,\de q &\lesssim 1\,.
  \ealnn
\begin{proof}
  Let $\phi(q) = \log \psi(q)$.
  By \eqref{e:K},
  \beq\label{e:phi}
    \phi(q) = \log \rho(0) + \fr{N}{2} + \fr{N-3}{2}\log(1-q^2)
    + \fr{Nq^2}{2} + f(q)\,.
  \eeq
  For $q\in [q_\lb,1]$, Lemma~\ref{l:K-log-concave} gives
  \[
    \phi''(q)
    = N - (N-3)\fr{1+q^2}{(1-q^2)^2} + f''(q) 
    \le N - (N-3)\fr{1+q^2}{(1-q^2)^2}
    \le 0\,.
  \]
  Also, by Corollary~\ref{c:f-deriv}, at $q = q_\lb$,
  \[
    \phi'(q)
    = q\lt(N - \fr{N-3}{1-q^2}\rt) + f'(q) 
    \le q\lt(N - \fr{N-3}{1-q^2}\rt) 
    \le 0\,.
  \]
  Since $\psi$ is even, it follows that $\opsi$ is log-concave on $[-1,1]$.
  Next, by Corollary~\ref{c:f-deriv},
  \[
    (\log J)'(q) = Nq + f'(q) \ge 0
  \]
  for $q\in [0,1]$, and therefore $J(q_\lb) \ge J(0)$.
  Since $\rho(0) \asymp N^{1/2}$ by Stirling's formula and $(1-q_\lb^2)^{(N-3)/2}\asymp 1$, we have
  \[
    \opsi(0) = \psi(q_\lb) = \rho(q_\lb) J(q_\lb) \gtrsim N^{1/2} J(0)\,.
  \]
  It remains to bound the integral of $\opsi$.
  For $0\le q\le q_\lb$, Corollary~\ref{c:f-deriv} gives
  \[
    -\fr{(N-3)q}{1-q^2}
    \le \phi'(q)
    \le q\lt(N - \fr{N-3}{1-q^2}\rt).
  \]
  Integrating and using $q_\lb=2N^{-1/2}$ shows
  \[
    |\phi(q_\lb)-\phi(q)| \le C
  \]
  for a universal constant $C$.
  Hence (after adjusting $C$) $\psi(q_\lb)\le C\psi(q)$ for all $q\in [0,q_\lb]$, and by evenness this holds for all $|q|\le q_\lb$.
  Therefore
  \[
    \int_{-1}^1 \opsi(q)\,\de q
    \le \int_{|q|\ge q_\lb} \psi(q)\,\de q + C\int_{-q_\lb}^{q_\lb} \psi(q)\,\de q
    \le C \int_{-1}^1 \psi(q)\,\de q
    = C \bbE^\sph_q[J(q)] = C\,. \qedhere
  \]
\end{proof}
\end{lem}

Lemmas~\ref{l:log-concave} and \ref{l:psi} together imply tail estimates of $\psi$, as stated in the next lemma.
\begin{lem}\label{l:psi-tail-bound}
    Let $\mu$ be the probability measure on $[-1,1]$ defined by $\mu(\de q)=\psi(q)\,\de q$ and $q_\lb=2N^{-1/2}$. Then, there are universal constants $c,C>0$ such that for any $q\ge q_\lb$,
    \[
      \mu[q,1]=\int_{q}^1\psi(s)\,\de s\le Ce^{-cN^{1/2}J(0)q}\,,
    \]
    and for $q\ge q_\lb+N^{-1}$,
    \[
      \psi(q)\le CN\exp\lt(-cN^{1/2}J(0)q\rt)\,.
    \]
\end{lem}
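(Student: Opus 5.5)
The plan is to apply Lemma~\ref{l:log-concave} to the normalized version of $\opsi$ from Lemma~\ref{l:psi}, and then transfer the resulting tail bound back to $\psi$.

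Let $Z = \int_{-1}^1 \opsi(q)\,\de q$, so $Z \lesssim 1$ by Lemma~\ref{l:psi}. We also have $Z \ge \int_{-1}^1\psi = 1$, since $\opsi \ge \psi$ pointwise on $|q|\ge q_\lb$ and on $|q|\le q_\lb$ we have $\opsi = \psi(q_\lb) \ge \psi(q)/C$. Set $\tilde\rho = \opsi/Z$, extended by zero outside $[-1,1]$. This extension is still even and log-concave, since $\log$ is $-\infty$ off a convex set. Lemma~\ref{l:psi} gives $\tilde\rho(0) = \opsi(0)/Z \ge c N^{1/2}J(0)$. Lemma~\ref{l:log-concave} then yields
\[
  \int_x^1 \opsi \le \tfrac{Z}{2} e^{-2cN^{1/2}J(0)x} \qquad\text{for all } x\ge 0.
\]
For $q\ge q_\lb$ we have $\psi = \opsi$ on $[q,1]$, so $\mu[q,1] = \int_q^1 \opsi \le C e^{-cN^{1/2}J(0)q}$. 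This is the first claim.

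For the pointwise bound, I would use the fact that $\psi=\opsi$ is log-concave on $[q_\lb,1]$ and even, and in particular nonincreasing on $[q_\lb,1]$. The monotonicity follows from the proof of Lemma~\ref{l:psi}: there $\phi'(q_\lb)\le 0$ and $\phi''\le 0$ on $[q_\lb,1]$. Given $q \ge q_\lb + N^{-1}$, monotonicity gives
\[
  N^{-1}\psi(q) \le \int_{q-N^{-1}}^{q}\psi(s)\,\de s \le \mu[q-N^{-1},1] \le C e^{-cN^{1/2}J(0)(q - N^{-1})}.
\]
It remains to absorb the shift by $N^{-1}$. By Proposition~\ref{p:J-of-zero} we only have a lower bound on $J(0)$, so an upper bound is needed here. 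The bound $J(0)\lesssim 1$ holds: $J(0)=\rho(0)^{-1}\psi(0)$, and $\psi(0)\le C\opsi(0)\lesssim \opsi(0)/Z \lesssim$ the central value of a log-concave density supported in $[-1,1]$. That central value is at most of order (density maximum) $\lesssim$ a bound that could be large, so the cleaner route is to use $\opsi(0) = \psi(q_\lb)\le \int_{0}^{q_\lb}$-type averaging, which gives $\psi(q_\lb)\lesssim q_\lb^{-1}$. Either way $N^{1/2}J(0)\lesssim N^{1/2}\cdot N^{1/2}\cdot N^{-1/2}\cdot\ldots$; I expect $N^{1/2}J(0)N^{-1}= O(1)$ at worst, so $e^{cN^{1/2}J(0)N^{-1}}\le C$. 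This gives $\psi(q)\le CN e^{-cN^{1/2}J(0)q}$.

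The main point to check carefully is this last absorption, namely that $N^{1/2}J(0)\lesssim N$. The fallback argument is the following. A log-concave density $\tilde\rho$ on $[-1,1]$ satisfies $\tilde\rho(0)\lesssim \max\tilde\rho$. Moreover, $\opsi$ is even and log-concave, so its maximum is at $0$. This yields no upper bound on its own, but using $\psi\le C\psi$-comparisons on $[0,q_\lb]$ gives $q_\lb\,\psi(q_\lb)\lesssim \int\psi = 1$. Hence $N^{1/2}J(0)\lesssim \opsi(0)\lesssim N^{1/2}$, which is more than enough. The remaining steps are routine.
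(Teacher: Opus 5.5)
Your proof is correct. Its skeleton matches the paper's: normalize $\opsi$, apply Lemma~\ref{l:log-concave}, use $\psi=\opsi$ on $[q_\lb,1]$, and then bound $\psi(q)$ by $N$ times its integral over $[q-N^{-1},q]$ using monotonicity on $[q_\lb,1]$.

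The one real difference is how you absorb the shift factor $e^{cN^{1/2}J(0)N^{-1}}$. You first prove the a priori bound $J(0)\lesssim 1$. Your fallback argument does this via $\psi(q_\lb)\le C\psi(q)$ for $|q|\le q_\lb$ together with $\int\psi=1$, and it is exactly the paper's Lemma~\ref{l:J-of-zero-ub-crude}. With that bound, the shift costs only a constant factor.

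The paper instead observes that $q\ge q_\lb+N^{-1}\ge 2N^{-1}$ implies $q-N^{-1}\ge q/2$. Hence $e^{-cN^{1/2}J(0)(q-N^{-1})}\le e^{-(c/2)N^{1/2}J(0)q}$, and it simply halves $c$. This uses no information about the size of $J(0)$.

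Your route keeps the same constant $c$ in both bounds, at the price of an extra lemma. Since $c$ is an unspecified universal constant anyway, the paper's one-line trick is the more economical choice.

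In a write-up, delete the speculative middle paragraph and keep only the fallback argument. Also, your claim $Z\ge\int\psi=1$ does not follow from what you cite: the comparison $\psi(q_\lb)\ge\psi(q)/C$ on $|q|\le q_\lb$ only gives $Z\gtrsim 1$. No lower bound on $Z$ is used anywhere, so this slip is harmless.
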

\begin{proof}
    Let $\bar{\mu}$ be the probability measure on $[-1,1]$ with density
    \[
      \bar{\mu}(\de q) = \bar{Z}^{-1} \opsi(q)\,\de q\,, \qquad\qquad
      \bar{Z} = \int_{-1}^1 \opsi(s)\,\de s\,.
    \]
    By Lemma~\ref{l:psi}, $\bar{Z} \lesssim 1$.
    Applying Lemma~\ref{l:log-concave} to $\bar{\mu}$, we obtain that for any $q\ge q_\lb$,
    \[
    \int_{q}^1\psi(s)\,\de s
    =\int_{q}^1\opsi(s)\,\de s
    \le \fr{1}{2}\exp\lt(-\fr{2\opsi(0)}{\bar{Z}}q\rt)
    \cdot \bar{Z}
    \le Ce^{-cN^{1/2}J(0)q}\,,
    \]
    where the last inequality follows from Lemma~\ref{l:psi} for some universal constants $c,C>0$. This establishes the first statement. 

    For the second statement, first note that for any $q \ge q_\lb + N^{-1}$, we have $q - N^{-1} \ge q/2$.
    Since $\opsi$ is non-increasing on $[q_\lb,1]$, we have for any such $q$,
    \[
    \psi(q)=\opsi(q)
    \le N\int_{q-N^{-1}}^q\opsi(s)\,\de s
    \le N\int_{q-N^{-1}}^1\opsi(s)\,\de s
    \le CNe^{-cN^{1/2}J(0)(q-N^{-1})}
    \le CNe^{-cN^{1/2}J(0)q/2}\,.
    \]
    This completes the proof after adjusting $c$.
\end{proof}

We conclude this subsection by an easy bound that $J(0)\lesssim 1$. This will be useful in later proofs in this section, though we will eventually be able to obtain the tight bound $J(0)\lesssim N^{-1/6}$ (see Corollary~\ref{c:J-of-zero-ub}).

\begin{lem}\label{l:J-of-zero-ub-crude}
    We have $J(0)\lesssim 1$. 
\end{lem}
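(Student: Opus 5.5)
The approach is to combine the normalization $\int_{-1}^1 \psi(q)\,\de q = \bbE^\sph_q[J(q)] = 1$ from \eqref{e:compare-XN2-to-1} with the monotonicity of $J$ established in the proof of Lemma~\ref{l:psi}. There we showed that $(\log J)'(q) = Nq + f'(q) \ge 0$ for $q \in [0,1]$, using Corollary~\ref{c:f-deriv}. Since $J$ is even, it follows that $J(q) \ge J(0)$ for every $q\in[-1,1]$. The spherical overlap density $\rho$ puts mass of order one on $|q| \le N^{-1/2}$, so we will get $J(0)$ at most a constant.

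\textbf{Step 1.} Set $I = [-N^{-1/2}, N^{-1/2}]$. Compute $\int_I \rho(q)\,\de q$ from \eqref{e:sphere-density}. By Stirling, $\rho(0) \asymp N^{1/2}$, and on $I$ we have $(1-q^2)^{(N-3)/2} \ge (1-N^{-1})^{N/2} \ge c$. Hence $\rho(q) \gtrsim N^{1/2}$ on $I$, which gives $\int_I \rho \gtrsim 1$. Alternatively, under $\bbE^\sph_q$ the overlap satisfies $\bbE^\sph_q[q^2] = 1/N$, so Chebyshev's inequality already gives $\bbP^\sph(|q| \le 2N^{-1/2}) \ge 3/4$.

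\textbf{Step 2.} Using $J \ge J(0)$ pointwise together with positivity of $J$,
\[
  1 = \bbE^\sph_q[J(q)] \ge \int_I \rho(q) J(q)\,\de q \ge J(0) \int_I \rho(q)\,\de q \gtrsim J(0)\,,
\]
which proves $J(0) \lesssim 1$. In fact, using the whole interval $[-1,1]$ instead of $I$ even gives $J(0) \le 1$ directly, since $J \ge J(0)$ everywhere and $\rho$ is a probability density.

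There is essentially no obstacle. The only point to check is that the monotonicity $(\log J)' \ge 0$ on $[0,1]$ relies only on Lemma~\ref{l:K-log-concave} and Corollary~\ref{c:f-deriv}, and not on Proposition~\ref{p:J-of-zero}. This holds, since it follows from $f'(q) \ge -Nq$, which in turn comes from $f'' \ge -N$ and $f'(0) = 0$.
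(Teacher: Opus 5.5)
Your proof is correct. Its final one-line form is a genuine simplification of the paper's argument.

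The paper restricts to $[-q_\lb,q_\lb]$ with $q_\lb=2N^{-1/2}$. It then uses the comparison $\psi(q_\lb)\le C\psi(q)$ for $|q|\le q_\lb$, which is obtained in the proof of Lemma~\ref{l:psi} by integrating two-sided bounds on $\phi'$. From this it gets $1\ge c\,q_\lb\,\psi(q_\lb)$. Finally it lower-bounds $\psi(q_\lb)=\rho(q_\lb)J(q_\lb)\ge c\sqrt N\,J(0)$ via $J(q_\lb)\ge J(0)$ and Stirling's formula for $\rho(q_\lb)$.

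You instead use the monotonicity of $J$ globally. Since $J$ is even and nondecreasing on $[0,1]$, it is minimized at $0$, so $1=\bbE^\sph_q[J(q)]\ge J(0)$ immediately. This gives the explicit constant $1$ and needs no Stirling asymptotics, no $\phi'$ bounds, and no localization to a window of width $N^{-1/2}$.

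Your Steps~1--2 are also correct, but they are redundant given the one-line version. These are the restriction to $I$ with $\rho\gtrsim N^{1/2}$ there, and the Chebyshev alternative.

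The paper's route mirrors the near-origin comparison it uses elsewhere in the section, but nothing is gained by it here. Your bound $J(0)\le 1$ serves every later use, e.g.\ choosing $K$ so that $q_*\ge q_\lb$.

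Your non-circularity check is right: $(\log J)'=Nq+f'\ge 0$ on $[0,1]$ rests only on Lemma~\ref{l:K-log-concave} and Corollary~\ref{c:f-deriv}, not on Proposition~\ref{p:J-of-zero}.
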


\begin{proof}
Recall that
$    \psi(q)=\rho(q)J(q)
$ satisfies 
$
    \int_{-1}^1 \psi(q)\,\de q
    =
    \bbE_q^\sph[J(q)]
    =
    1
$.
Let $q_\lb=2N^{-1/2}$. In the proof of Lemma~\ref{l:psi}, we showed that
$
    \psi(q_\lb)\le C\psi(q)$
    for all $|q|\le q_\lb$.
Therefore,
\[
    1
    =
    \int_{-1}^1\psi(q)\,\de q
    \ge
    \int_{-q_\lb}^{q_\lb}\psi(q)\,\de q
    \ge
    c q_\lb\psi(q_\lb).
\]
On the other hand, Corollary~\ref{c:f-deriv} implies that $(\log J)'=Nq+f'\ge 0$, so $J$ is non-decreasing on
$[0,1]$, and
thus $J(q_\lb)\ge J(0)$. Also, by Stirling's formula,
$
    \rho(q_\lb)\asymp \sqrt N.
$
Hence
\[
    \psi(q_\lb)
    =
    \rho(q_\lb)J(q_\lb)
    \ge
    c\sqrt N\,J(0).
\]
Combining the last two displays gives
\[
    1
    \ge
    c q_\lb\sqrt N\,J(0).
\]
Since $q_\lb=2N^{-1/2}$, we conclude that $J(0)\le C$.
\end{proof}

\subsection{Comparing sphere and cube overlaps}

We now proceed to prove Theorem~\ref{t:compare-cube-sphere}, Proposition~\ref{p:reweight-annealed-overlap-ub}, and Corollary~\ref{c:sphere}\ref{i:sphere-overlap-ub}. The main idea is to compare the expectations $\bbE_q$ and $\bbE^\sph_q$ in \eqref{e:reweighted-2mt}, \eqref{e:reweighted-sph-2mt} for the cube and sphere overlap distributions. We will divide the overlap space $[-1,1]$ into a central region near $0$ and a tail region. In the central region, we will be able to directly compare $\bbE_q$ and $\bbE^\sph_q$, which have asymptotically equivalent densities. In the tail region, the integrand decays exponentially by Lemma~\ref{l:psi-tail-bound}, so the contributions to both $\bbE_q$ and $\bbE^\sph_q$ are negligible.

Recall $p, \rho$ defined in \eqref{e:ising-mass}, \eqref{e:sphere-density}, and $\cQ_N = \{-1,-1+\fr{2}{N},\ldots,1\}$.
We start by the following density comparison lemma.

\begin{lem}\label{l:cube-sphere-equivalence}
    For any $q \in \cQ_N$ such that $|q|=o(N^{-1/4})$ and $\tilde q\in [q-N^{-1},q+N^{-1}]$, we have
    \[
    p(q)e^{Nq^2/2}=\rho(\tilde q)e^{N\tilde q^2/2}\cdot \fr{2}{N}\cdot \lt(1+O\lt(N^{-1}+q^2+Nq^4\rt)\rt)\,.
    \]
    The implicit constant in the $O(\cdot)$ is uniform in $q,\tilde q$.
\end{lem}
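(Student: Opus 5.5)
We compare $p(q)e^{Nq^2/2}\cdot\fr N2$ and $\rho(\tilde q)e^{N\tilde q^2/2}$ separately to a common reference. Both turn out to equal $\sqrt{N/(2\pi)}\,(1+\text{error})$, so taking the ratio gives the claim.

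\textbf{The cube side.} Write $k = N(1+q)/2$ and apply Stirling's formula with its $1+O(1/n)$ correction to $\binom{N}{k}$. The entropy form of Stirling gives
\[
  p(q) = \sqrt{\fr{2}{\pi N(1-q^2)}}\,\exp\lt(-N I(q)\rt)\lt(1+O(N^{-1})\rt).
\]
Here $I(q) = \fr{1+q}{2}\log(1+q) + \fr{1-q}{2}\log(1-q) = \fr{q^2}{2} + \fr{q^4}{12} + \dots$. The regime $|q| = o(N^{-1/4})$ keeps $k$ and $N-k$ of order $N$, so the Stirling corrections are uniformly $O(N^{-1})$. Then $e^{Nq^2/2}e^{-NI(q)} = \exp(O(Nq^4)) = 1+O(Nq^4)$, which uses $Nq^4 = o(1)$. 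Also $(1-q^2)^{-1/2} = 1+O(q^2)$. Altogether
\[
  p(q)e^{Nq^2/2} = \sqrt{\fr{2}{\pi N}}\,\lt(1+O(N^{-1}+q^2+Nq^4)\rt).
\]

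\textbf{The sphere side.} The Gamma-function ratio satisfies $\Gamma(N/2)/\Gamma((N-1)/2) = \sqrt{N/2}\,(1+O(N^{-1}))$. This gives $\rho(0) = \sqrt{N/(2\pi)}\,(1+O(N^{-1}))$. Next,
\[
  \fr{N-3}{2}\log(1-\tilde q^2) + \fr{N\tilde q^2}{2} = \fr{3\tilde q^2}{2} + O(N\tilde q^4) = O(\tilde q^2 + N\tilde q^4).
\]
Hence $\rho(\tilde q)e^{N\tilde q^2/2} = \sqrt{N/(2\pi)}\,(1+O(N^{-1}+\tilde q^2+N\tilde q^4))$.

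\textbf{Replacing $\tilde q$ by $q$.} Since $|\tilde q - q| \le N^{-1}$, we have $\tilde q^2 \lesssim q^2 + N^{-2}$ and $N\tilde q^4 \lesssim Nq^4 + N^{-3}$. So the sphere-side error is also $O(N^{-1}+q^2+Nq^4)$.

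\textbf{Conclusion.} Multiplying the sphere-side expression by $\fr2N$ gives
\[
  \fr{2}{N}\sqrt{\fr{N}{2\pi}} = \sqrt{\fr{2}{\pi N}},
\]
which matches the cube-side leading factor. Taking the ratio yields the lemma, with constants uniform in $q,\tilde q$.

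The only delicate point is keeping the Taylor expansions uniform, that is, making sure the quartic remainder bounds are valid on the whole range. This holds because $|q|,|\tilde q| \le 1/2$ for large $N$, so all the bounds are routine.
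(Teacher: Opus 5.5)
Your proposal is correct and follows the paper's proof essentially verbatim: Stirling's formula reduces both $p(q)e^{Nq^2/2}$ and $\fr{2}{N}\rho(\tilde q)e^{N\tilde q^2/2}$ to the common leading factor $\sqrt{2/(\pi N)}$, with relative errors $O(N^{-1}+q^2+Nq^4)$ and $O(N^{-1}+\tilde q^2+N\tilde q^4)$ respectively, and taking the ratio gives the claim. The paper leaves implicit the step that converts the $\tilde q$-error into a $q$-error using $|\tilde q-q|\le N^{-1}$; you carry it out explicitly, and correctly.
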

\begin{proof}
    For any $q \in \cQ_N$ with $|q|=o(N^{-1/4})$, Stirling's formula gives
    \[
    p(q)e^{Nq^2/2}=2^{-N}\binom{N}{\fr{1+q}{2}N}e^{Nq^2/2}=\sqrt{\fr{2}{\pi N}}\lt(1+O\lt(N^{-1}+q^2+Nq^4\rt)\rt)\,,
    \]
    and for any $\tilde q\in [q-N^{-1},q+N^{-1}]$,
    \[
    \rho(\tilde q)e^{N\tilde q^2/2}
    =\fr{\Gamma(N/2)}{\sqrt\pi\Gamma((N-1)/2)}(1-\tilde q^2)^{(N-3)/2}e^{N\tilde q^2/2}
    =\sqrt{\fr{N}{2\pi}}\lt(1+O(N^{-1}+\tilde q^2+N\tilde q^4)\rt)\,.
    \]
The desired result then follows. 
\end{proof}

In what follows, we set $q_\lb=2N^{-1/2}$ and $a=N^{1/2}J(0)$. Moreover, we fix a large constant $K>0$ and let $q_*=Ka^{-1}$. By Lemma~\ref{l:J-of-zero-ub-crude} we can pick $K$ large enough such that $q_*\ge q_\lb$. 

\begin{proof}[Proof of Theorem~\ref{t:compare-cube-sphere}]
Recall from \eqref{e:overview-goal-compare-step1} and Lemma~\ref{l:reweighted-2mt} that
\[
    \bbE[(X_N-1)^2]
    =
    \bbE_q[J(q)]-\bbE_q^\sph[J(q)]=\sum_{q\in \cQ_N}p(q)J(q)-\int_{-1}^1\rho(q)J(q)\,\de q.
\]
Let $q_\ub=L a^{-1} \log N$, where $L$ is a sufficiently large universal constant. Let $A=\cQ_N\cap [-q_\ub,q_\ub]$ and $I_A=A+[-N^{-1},N^{-1}]$.
We decompose
\[
\begin{aligned}
\bbE[(X_N-1)^2]
&\le
\left|
    \sum_{q\in A}p(q)J(q)
    -
    \int_{I_A}\rho(\tilde q)J(\tilde q)\,\de \tilde q
\right|  
+
\sum_{q\in\cQ_N\setminus A}p(q)J(q)
+
\int_{[-1,1]\setminus I_A}\rho(q)J(q)\,\de q .
\end{aligned}
\]
We first bound the spherical tail. Recall that $\psi(q)=\rho(q)J(q)$. 
Note that $q_\ub - N^{-1} \ge q_\lb$, while Lemma~\ref{l:J-of-zero-ub-crude} gives
\[
  a(q_\ub - N^{-1}) = L \log N - O(N^{-1/2})\,.
\]
Choosing $L$ large then ensures that, by Lemma~\ref{l:psi-tail-bound},
\balnn
    \int_{[-1,1]\setminus I_A}\rho(q)J(q)\,\de q
    &\le
    2\int_{q\ge q_\ub - N^{-1}}\psi(q)\,\de q
    \le
    C\exp(-ca(q_\ub - N^{-1}))
    \lesssim N^{-10}\,.
\ealnn
Next we bound the cube tail. We use the factorization
\[
    p(q)J(q)
    =
    p(q)e^{Nq^2/2}\cdot e^{N/2}K(q).
\]
Uniformly over $q\in\cQ_N$,
\[
    p(q)e^{Nq^2/2}\le \fr{C}{\sqrt N}.
\]
Since $f=\log K$ is even and concave by Lemma~\ref{l:K-log-concave}, $K$ is non-increasing on $[0,1]$. Hence for $|q|\ge q_\ub$,
\[
    e^{N/2}K(q)\le e^{N/2}K(q_\ub).
\]
By the pointwise estimate in Lemma~\ref{l:psi-tail-bound},
\[
    \psi(q_\ub)
    \le
    CN\exp(-caq_\ub).
\]
By Proposition~\ref{p:J-of-zero},
\[
  q_\ub \lesssim (N^{1/2} J(0))^{-1} \log N \lesssim N^{-1/3} \log N \ll N^{-1/4}\,.
\]
So, Stirling's formula gives
\[
  \psi(q_\ub)
  = \rho(q_\ub)e^{Nq_\ub^2/2}e^{N/2}K(q_\ub)
  \asymp \sqrt{N} e^{N/2}K(q_\ub)\,.
\]
Therefore,
\[
    e^{N/2}K(q_\ub)
    \le C\sqrt N\exp(-caq_\ub)\le 
    CN^{-cL + \fr12}\,.
\]
Consequently, for sufficiently large $L$,
\[
    \sum_{q\in\cQ_N\setminus A}p(q)J(q)
    \le
    \sum_{q\in\cQ_N\setminus A}
    \fr{C}{\sqrt N}
    \cdot
    CN^{-cL + \fr12}
    \le
    CN^{-cL+1} \lesssim N^{-10}\,.
\]
Thus, both the cube and spherical tail have negligible contribution.

It remains to compare the central sum with the central integral. For $q\in A$ and
$|\tilde q-q|\le N^{-1}$, Lemma~\ref{l:cube-sphere-equivalence} gives
\[
    p(q)e^{Nq^2/2}
    =
    \rho(\tilde q)e^{N\tilde q^2/2}\cdot
    \fr{2}{N}
    \cdot\lt(1+O(N^{-1}+q^2+Nq^4)\rt).
\]
Also, by Corollary~\ref{c:f-deriv}, for such $q,\tilde q$, (recall $f = \log K$)
\[
    |f(q)-f(\tilde q)|
    \le
    N(|q|+N^{-1})|\tilde q-q|
    \le
    |q|+N^{-1}.
\]
Thus
\[
    e^{N/2}K(q)
    =
    e^{N/2}K(\tilde q)
    \lt(1+O(|q|+N^{-1})\rt),
\]
and hence
\[
    p(q)J(q)
    =
    \lt(1+O(N^{-1}+q^2+Nq^4+|q|)\rt)
    \int_{q-N^{-1}}^{q+N^{-1}}
        \rho(\tilde q)J(\tilde q)\,\de \tilde q .
\]
Summing over $q\in A$, we get
\begin{align*}
\Bigg|
    \sum_{q\in A}p(q)J(q)
    -
    \int_{I_A}\rho(q)J(q)\,\de q
\Bigg|
\le
    C
    \int_{I_A}(N^{-1}+q^2+Nq^4+|q|)\rho(q)J(q)\,\de q .
\end{align*}
We claim the right-hand side integral is $O(N^{-1/3})$.
To see this, recall that $\mu$ is the probability measure on $[-1,1]$ defined by $\mu(\de q)=\psi(q)\,\de q=\rho(q)J(q)\,\de q$. By evenness of $\mu$ and integration by parts,
\[
  \int_{I_A}(N^{-1}+q^2+Nq^4+|q|)\rho(q)J(q)\,\de q  \le N^{-1}+2\int_0^1(1+2q+4Nq^3)\mu[q,1]\,\de q.
\]
Recall $a=N^{1/2}J(0)$ and $q_*=Ka^{-1}\ge q_\lb$. For $0\le q\le q_*$, we have
$
(1+2q+4Nq^3)\mu[q,1]\le 1+2q_*+4Nq_*^3
$,
and thus 
\[
\int_0^{q_*}(1+2q+4Nq^3)\mu[q,1]\,\de q\le C(a^{-1}+Na^{-4})\,.
\]
For $ q\ge q_*$, using the tail estimate of $\mu$ in Lemma~\ref{l:psi-tail-bound}, we have
\[
    \int_{q_*}^1(1+2q+4Nq^3)\mu[q,1]\,\de q
    \le C\int_{Ka^{-1}}^1(1+Nq^3)e^{-caq}\,\de q
    = C\int_{K}^a(a^{-1}+Na^{-4}s^3)e^{-cs}\,\de s\,,
\]
where the last identity follows by the change of variables $s=aq$. 
It is straightforward to show that the last integral is upper bounded by $C(a^{-1}+Na^{-4})$. Combining these estimates together, the claim follows. 

This shows the central contribution is at most
\[
C(a^{-1}+Na^{-4})\lesssim \fr{1}{N^{1/2}J(0)}+\fr{1}{NJ(0)^4}
\lesssim N^{-1/3}\,,
\]
where in the last estimate we used Proposition~\ref{p:J-of-zero}.
As we showed above the tail contributions are lower order, the result follows.
\end{proof}

To prove Proposition~\ref{p:reweight-annealed-overlap-ub} and Corollary~\ref{c:sphere}\ref{i:sphere-overlap-ub}, we need the following lemma. 

\begin{lem}\label{lem:spherical-exp-moment}
There exists a universal constant $c_0>0$ such that, for all $0<c\le c_0$,
\[
    \int_{-1}^1 \rho(q)J(q)
    \exp(ca|q|)\,\de q
    \le 1.1 .
\]
\end{lem}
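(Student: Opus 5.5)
We write the integral as $\int_{-1}^1 \psi(q) e^{ca|q|}\,\de q$, where $\psi = \rho J$ is the probability density of the measure $\mu$ from Lemma~\ref{l:psi-tail-bound}. Since $\int\psi = 1$, we have
\[
  \int_{-1}^1 \psi(q) e^{ca|q|}\,\de q - 1 = \int_{-1}^1 \psi(q)\lt(e^{ca|q|}-1\rt)\de q\,.
\]
By evenness of $\psi$ and integration by parts (Fubini, writing $e^{cas}-1 = \int_0^s ca e^{cat}\,\de t$), the right-hand side equals
\[
  2\int_0^1 ca\, e^{caq}\, \mu[q,1]\,\de q\,.
\]
So it suffices to show this quantity is at most $0.1$ once $c$ is small. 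This mirrors the integration-by-parts step used in the proof of Theorem~\ref{t:compare-cube-sphere}.

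We split the integral at $q_\lb = 2N^{-1/2}$. On $[0,q_\lb]$ we use the trivial bound $\mu[q,1]\le \fr12$, giving a contribution at most $c a q_\lb e^{caq_\lb} = 2cJ(0)e^{2cJ(0)}$. By Lemma~\ref{l:J-of-zero-ub-crude}, $J(0)\le C_1$ for a universal $C_1$, so this is at most $2cC_1e^{2cC_1}$, which is small for small $c$.

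On $[q_\lb,1]$ we apply the first bound of Lemma~\ref{l:psi-tail-bound}, $\mu[q,1]\le Ce^{-c_1 aq}$, with universal $c_1,C$. Taking $c\le c_1/2$, the contribution is at most
\[
  2Cca\int_{q_\lb}^\infty e^{-(c_1-c)aq}\,\de q \le \fr{2Cc}{c_1-c}\le \fr{4Cc}{c_1}\,,
\]
which is again small for small $c$. Choosing $c_0$ so that both contributions sum to at most $0.1$ completes the argument.

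The only delicate point is that the bound must be uniform in $N$ and must not lose factors depending on $a$. The near-zero region is harmless only because $aq_\lb = 2J(0)$ is bounded, which is where Lemma~\ref{l:J-of-zero-ub-crude} enters. The tail region is scale-free after the substitution $s = aq$. No input from Proposition~\ref{p:J-of-zero} is needed here.
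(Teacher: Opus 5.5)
Your proof is correct. It uses the same ingredients as the paper's: evenness of $\psi$, integration by parts against $\mu[q,1]$, the tail bound of Lemma~\ref{l:psi-tail-bound}, and the crude bound $J(0)\lesssim 1$ from Lemma~\ref{l:J-of-zero-ub-crude}. The bookkeeping, however, is different.

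The paper splits the integral at $q_*=Ka^{-1}$ with $K$ a large constant. It bounds the central part by $e^{cK}\int_{|q|\le q_*}\psi\le e^{cK}$ and the tail (after integrating by parts) by $Ce^{-(c_1-c)K}$. It then has to choose $K$ large first and $c$ small depending on $K$. There the crude bound on $J(0)$ enters only indirectly, to guarantee $q_*\ge q_\lb$ so that the tail lemma applies.

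You instead subtract the total mass first and write the excess as $2\int_0^1 ca\,e^{caq}\mu[q,1]\,\de q$. You then split at $q_\lb$ itself, so both pieces are directly $O(c)$: namely $2cJ(0)e^{2cJ(0)}$ and $4Cc/c_1$. The crude bound shows up transparently as $aq_\lb=2J(0)=O(1)$.

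Your route avoids the two-stage tuning of $K$ and $c$ and yields the explicit estimate $\int\psi(q)e^{ca|q|}\,\de q\le 1+O(c)$ with a single parameter. The paper's route instead reuses the $q_*$ splitting already set up for the neighboring proofs of Theorem~\ref{t:compare-cube-sphere} and Proposition~\ref{p:reweight-annealed-overlap-ub}.
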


\begin{proof}
Recall that $a = N^{1/2} J(0)$ and $q_*=Ka^{-1}$.
We will choose $K$ sufficiently large and $c$ sufficiently small depending on $K$.
We split the exponential moment at $q_*$. On the central region,
\[
    \int_{|q|\le q_*}\psi(q)e^{ca|q|}\,\de q
    \le
    e^{cK}\int_{|q|\le q_*}\psi(q)\,\de q .
\]
For the tail, by the evenness and integration by parts,
\[
\begin{aligned}
    \int_{|q|\ge q_*}\psi(q)e^{ca|q|}\,\de q
    &\le
    2e^{caq_*}\mu[q_*,1]
    +
    2ca\int_{q_*}^1 e^{cat}\mu[t,1]\,\de t
\end{aligned}
\]
By Lemma~\ref{l:psi-tail-bound}, for all $t\ge q_*$,
$
    \mu[t,1]\le C e^{-c_1at}
$
for some universal constants $c_1,C>0$. Therefore,
\[
\begin{aligned}
    \int_{|q|\ge q_*}\psi(q)e^{ca|q|}\,\de q
    &\le
 Ce^{-(c_1-c)K}
    +
    Cca\int_{q_*}^{\infty} e^{-(c_1-c)at}\,\de t,
\end{aligned}
\]
which is bounded by $  C e^{-(c_1-c)K}$ for some enlarged $C$. 
Now choose $K$ sufficiently large so that $C e^{-c_1K/2}\le 0.01$, and then choose
$c_0>0$ sufficiently small so that $c_0\le c_1/2$ and $e^{c_0K}\le 1.01$. For every
$0<c\le c_0$, the preceding estimates give
\[
    \int_{-1}^1\psi(q)e^{ca|q|}\,\de q
    \le
    e^{cK}\int_{|q|\le q_*}\psi(q)\,\de q
    +
    0.01 
    \le
    1.01\int_{|q|\le q_*}\psi(q)\,\de q
    +
    0.01
    \le 1.02 .
\]
This implies the desired bound.
\end{proof}

We now provide the proof of Proposition~\ref{p:reweight-annealed-overlap-ub} and Corollary~\ref{c:sphere}\ref{i:sphere-overlap-ub}. 

\begin{proof}[Proof of Proposition~\ref{p:reweight-annealed-overlap-ub}]
As before, we let $L$ be a sufficiently large universal constant and
\[
    q_\ub=La^{-1} \log N,
    \qquad
    A=\cQ_N\cap[-q_\ub,q_\ub],\qquad I_A=A+[-N^{-1},N^{-1}].
\]
By \eqref{e:reweighted-2mt} applied to $g(q)=\exp(ca|q|)$, 
\[
\begin{aligned}
    \bbE\lt[
        X_N^2
        \la
            \exp(c a |R_{1,2}|)
        \ra
    \rt]
    &=
    \bbE_q\lt[
        J(q)\exp(c a |q|)
    \rt]
    &=
    \sum_{q\in\cQ_N}
        p(q)J(q)\exp(c a |q|).
\end{aligned}
\]
We compare this cube sum with the corresponding spherical integral
\[
    \int_{-1}^1 \rho(q)J(q)\exp(c a |q|)\,\de q.
\]

By Lemma~\ref{lem:spherical-exp-moment}, after choosing $c>0$ sufficiently small,
\[
    \int_{-1}^1 \rho(q)J(q)\exp(c a |q|)\,\de q
    \le 1.1 .
\]
It remains to bound the error from replacing the cube sum by the spherical integral. We first consider the
central region. 

For $q\in A$ and $|\tilde q-q|\le N^{-1}$, Lemma~\ref{l:cube-sphere-equivalence}
gives
\[
    p(q)e^{Nq^2/2}
    =
    \rho(\tilde q)e^{N\tilde q^2/2}
    \cdot \fr{2+o(1)}{N}.
\]
Moreover, by Corollary~\ref{c:f-deriv},
\[
    |f(q)-f(\tilde q)|
    \le
    N(q_\ub+N^{-1})|\tilde q-q|
    \le q_\ub+N^{-1}=o(1),
\]
and also by Lemma~\ref{l:J-of-zero-ub-crude},
\[
    ca\bigl||q|-|\tilde q|\bigr|
    \le
    caN^{-1}
    =o(1).
\]
Therefore,
\[
    J(q)e^{ca|q|}
    =
    \lt(1+o(1)\rt)\cdot J(\tilde q)e^{ca|\tilde q|}
    .
\]
Combining the above estimates, for every $q\in A$,
\[
\begin{aligned}
    p(q)J(q)e^{ca|q|}
    =
    \lt(1+o(1)\rt)
    \int_{q-N^{-1}}^{q+N^{-1}}
        \rho(\tilde q)J(\tilde q)e^{ca|\tilde q|}\,\de \tilde q .
\end{aligned}
\]
Summing over $q\in A$ and using Lemma~\ref{lem:spherical-exp-moment}, we obtain
\[
\begin{aligned}
    \sum_{q\in A}p(q)J(q)e^{ca|q|}
    &\le
    \lt(1+o(1)\rt)
    \int_{I_A}\rho(q)J(q)e^{ca|q|}\,\de q
    \le
    1.1+o(1).
\end{aligned}
\]
Thus the central contribution is at most $1.2$, for all sufficiently
large $N$.

It remains to bound the cube tail. We use two different estimates, according to the size of $q$. By evenness it suffices to consider $q\ge q_\ub$.

First, by the binomial and spherical density formula, Stirling's formula and Taylor expansion give
\[
p(q)
=
(1+o(1)) \sqrt{\fr{2}{\pi N}}
\exp\lt\{
-\fr N2 q^2-\fr N{12}q^4+O(Nq^6)
\rt\},
\]
while
\[
\rho(q)
=
(1+o(1)) \sqrt{\fr{N}{2\pi}}
\exp\lt\{
-\fr N2 q^2-\fr N4 q^4+O(Nq^6)
\rt\}.
\]
Consequently, for $q\in \cQ_N$ such that $Nq^6\le 1$, it follows that
\[
\fr{p(q)}{\rho(q)}
\le
\fr{C}{N}
\exp\lt\{\fr16Nq^4\rt\}.
\]
Hence, using the pointwise estimate from Lemma~\ref{l:psi-tail-bound}, for $q_\ub\le q\le N^{-1/6}$,
\[
\begin{aligned}
    p(q)J(q)e^{ca q}
    &\le
    \fr{C}{N}
    \exp\lt\{\fr{1}{6}Nq^4\rt\}
    \rho(q)J(q)e^{ca q}  \le
    C
    \exp\lt\{\fr{1}{6}Nq^4-c_1aq+caq\rt\}.
\end{aligned}
\]
In particular, by choosing $c>0$ small depending on $c_1$ and $L$ large depending on $c,c_1$, we have that whenever $q\ge q_\ub$ and $Nq^4\le 12caq\le \fr{c_1}{4}aq$ (which automatically implies $q\le N^{-1/6}$ by Lemma~\ref{l:J-of-zero-ub-crude}),
\[
    p(q)J(q)e^{caq}
    \le
    C e^{-c_1aq/4}
    \le
    C e^{-c_1aq_\ub/4}
    = CN^{-Lc_1/4}
    \lesssim N^{-10}\,.
\]
On the other hand, we also have the crude bound for any $q\in \cQ_N$,
\[
    p(q)e^{Nq^2/2}
    \le
    C\exp\lt\{-\fr{1}{12}Nq^4\rt\}.
\]
Indeed, by Stirling's formula,
$p(q)
    \le
    C\exp(-NI(q))$, where $I(q)$ is the entropy function. 
Since $I(q)-q^2/2\ge q^4/12$ for all $|q|\le 1$, the result follows. Meanwhile, since $f=\log K$ is even and concave, $K$ is non-increasing on $[0,1]$. Therefore, for
$q\ge q_\ub$,
\[
    e^{N/2}K(q)\le e^{N/2}K(q_\ub)\le 
    CN^{-cL + \fr12}
\]
as in the proof of Theorem~\ref{t:compare-cube-sphere}.
Thus for any $q\ge q_\ub$,
\[
    p(q)J(q)e^{caq}
    \le
    C\exp\lt\{-\fr{1}{12}Nq^4+caq\rt\}\cdot N^{-cL + \fr12}\,.
\]
This gives that whenever $Nq^4\ge 12caq$, for $L$ sufficiently large,
\[
    p(q)J(q)e^{caq}
    \le
    CN^{-cL + \fr12}
    \lesssim N^{-10}\,.
\]
Combining the above two bounds, we conclude by choosing $c>0$ small enough that
\[
    \sup_{q\in\cQ_N\setminus A}
    p(q)J(q)e^{ca|q|}
    \ll
    N^{-1}.
\]
Since $|\cQ_N|\le N+1$, we obtain
\[
    \sum_{q\in\cQ_N\setminus A}
        p(q)J(q)e^{ca|q|}=o(N\cdot N^{-1})=o(1)\,.
\]

Combining the central estimate and the tail estimate, and decreasing $c>0$ if necessary, we get
\[
    \bbE\lt[
        X_N^2
        \la
            \exp(ca|R_{1,2}|)
        \ra
    \rt]
    \le 2 .
\]
This proves the proposition.
\end{proof}

\begin{proof}[Proof of Corollary~\ref{c:sphere}\ref{i:sphere-overlap-ub}]
By \eqref{e:reweighted-sph-2mt} applied to $g(q)=\exp(ca|q|)$, we have
\[
    \bbE\lt[
        \la
            \exp(ca|R_{1,2}|)
        \ra^\sph
    \rt]
    =
    \int_{-1}^1
        \rho(q)J(q)
        \exp(ca|q|)\,\de q .
\]
By Lemma~\ref{lem:spherical-exp-moment}, there exists $c_0$ such that for any $c'\le c_0$, 
\[
    \bbE\lt[
        \la
            \exp(c'a|R_{1,2}|)
        \ra^\sph
    \rt]
    \le 1.1 .
\]
On the other hand, Proposition~\ref{p:J-of-zero} gives $J(0)\ge c_1N^{-1/6}$, and hence $a=N^{1/2}J(0)\ge c_1N^{1/3}$.
Therefore, for small enough $c$ we may pick $c'\le c_0$ such that $c'a=cN^{1/3}$, and thus
\[
    \bbE\lt[
        \la
            \exp(cN^{1/3}|R_{1,2}|)
        \ra^\sph
    \rt]
    \le 1.1
    \le 2 .
\]
This proves Corollary~\ref{c:sphere}\ref{i:sphere-overlap-ub}.
\end{proof}

\section{The spherical two-replica ratio at criticality}
\label{s:rmt}

In this section, we will prove Proposition~\ref{p:J-of-zero}.
Define the Stiefel manifold
\[
  S_{N,2} = \lt\{(\bx,\by) \in S_N \times S_N: R(\bx,\by) = 0\rt\}\,,
\]
and let $\nu_{N,2}$ denote the normalized Haar measure on $S_{N,2}$. Then define
\[
  Z^\sph_{N,2} = \int_{S_{N,2}} e^{H_N(\bx) + H_N(\by)} \,\de \nu_{N,2}(\bx,\by)\,.
\]
By rotational invariance it is clear that
\beq\label{e:J-of-zero-rotational-invariance}
  J(0) = \bbE\lt[\fr{Z^\sph_{N,2}}{(Z^\sph_N)^2} \rt]\,.
\eeq
Thus it suffices to show that this ratio is at least of order $N^{-1/6}$
with probability bounded away from zero. More precisely, Proposition~\ref{p:J-of-zero}
follows immediately from the following.
\begin{ppn}\label{ppn:positive-probability-lower-bound}
  There exists a universal constant $c>0$ such that
  \[
    \bbP\lt(\fr{Z^\sph_{N,2}}{(Z^\sph_N)^2} \ge cN^{-1/6}\rt) \ge c\,.
  \]
\end{ppn}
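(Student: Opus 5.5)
The plan is to write both $Z^\sph_{N,2}$ and $(Z^\sph_N)^2$ as contour integrals in the eigenvalues $\lambda_1\ge\cdots\ge\lambda_N$ of $\bW$. We then evaluate them on a high-probability "good edge event" and read off the extra factor $N^{-1/6}$ from the repulsion between the two contour variables.

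\textbf{Step 1: contour representations.} For $Z^\sph_N$ we use Lemma~\ref{lem:contour-integral-1}. Up to explicit constants $C_N$ it reads
\[
Z^\sph_N = C_N \int_{\gamma - i\infty}^{\gamma+i\infty} e^{Nz/2}\prod_{j=1}^N (z-\lambda_j)^{-1/2}\,\de z, \qquad \gamma>\lambda_1 .
\]
For $Z^\sph_{N,2}$ we derive the analogue in the same way.
\begin{itemize}
\item Replace $(\bx,\by)$ by a pair of Gaussian vectors $(\bg,\bh)$, and use that $(\bg,\bh)$ normalized by its $2\times 2$ Gram matrix is Haar on the Stiefel manifold.
\item Impose the constraint that the Gram matrix equals $N\bI_2$ through a Fourier/Laplace integral over a symmetric $2\times 2$ multiplier $\bLambda$. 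This gives
\[
Z^\sph_{N,2}=C_{N,2}\int \det(\bLambda)^{\text{(explicit power)}}\, e^{N\Tr\bLambda/2}\prod_j \det(\bLambda-\lambda_j)^{-1/2}\,\de\bLambda .
\]
\item Diagonalize $\bLambda$ with eigenvalues $z,w$; the Jacobian produces a factor $|z-w|$.
\end{itemize}
The result is a double contour integral: the product of two copies of the one-variable integrand, times a Vandermonde-type factor $(z-w)$ and an explicit normalization $C_{N,2}/C_N^2$. Stirling's formula should show this normalization is $\asymp N^{1/2}$. I will check this constant carefully, since the exponent $-1/6$ depends on it.

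\textbf{Step 2: heuristic for the exponent.} At $\beta=1$ the saddle point of $z\mapsto Nz/2-\fr12\sum_j\log(z-\lambda_j)$ sits at the spectral edge $2$, and the relevant contour scale there is $N^{-2/3}$. So in the ratio $Z^\sph_{N,2}/(Z^\sph_N)^2$:
\begin{itemize}
\item the bulk (linear-statistic) contributions of the two copies cancel exactly, since both numerator and denominator contain the same product over $j$ twice;
\item what remains is $N^{1/2}\cdot\mathbb{E}_{\text{contour}}|z-w|\asymp N^{1/2}\cdot N^{-2/3}=N^{-1/6}$.
\end{itemize}
To make this rigorous, I would rescale $z=2+N^{-2/3}s$ and $w=2+N^{-2/3}t$. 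I would place both contours at $\operatorname{Re}s=\operatorname{Re}t=\tau$, with $\tau$ a large constant chosen so that $\tau$ exceeds $N^{2/3}(\lambda_1-2)$ on the good event. It is convenient to symmetrize the Vandermonde factor, writing $(z-w)\cdot(\bar{\ })$ or pairing $(s,t)$ with $(t,s)$, so that the integrand becomes a positive weight times the squared modulus of a one-variable function.

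\textbf{Step 3: the good event.} We use GOE edge rigidity and Tracy--Widom tightness (the results the paper intends to invoke). With probability at least $c$, the following hold:
\begin{itemize}
\item $|N^{2/3}(\lambda_1-2)|\le C$;
\item the top $k$ rescaled eigenvalues are separated from each other and from $\tau$ by at least constants;
\item the rescaled log-product $\sum_j\log(z-\lambda_j)$ minus its deterministic semicircle approximation is $O(1)$ uniformly on compact sets of the rescaled contour.
\end{itemize}
On this event, the rescaled one-variable integrand $F(s)$ is an analytic function with explicit asymptotics. The denominator $(Z^\sph_N)^2$ is $(N^{-2/3}\int F)^2$ up to the same constants, and the numerator is $N^{1/2}N^{-4/3}\iint F(s)F(t)\,N^{-2/3}(s-t)$, suitably symmetrized.

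\textbf{Main obstacle.} The hard step is the lower bound for the oscillatory double integral. Upper bounds follow easily from absolute values, but here cancellation could make the numerator small. I would first try to show that on the good event $F$ is concentrated where $|\operatorname{Im}s|\le C$ and has argument bounded away from $\pm\pi/2$ there, so that the integrals have real parts comparable to their moduli. Failing that, I would return to the real-variable (pre-Fourier) representation: write $Z^\sph_{N,2}=\int e^{H_N(\bx)}\,Z^\sph_{N-1}(\bx^\perp)\,\de\nu_N(\bx)$, where $Z^\sph_{N-1}(\bx^\perp)$ is the spherical partition function of the compression of $\bW$ to $\bx^\perp$. Then lower bound the inner partition function using the interlacing eigenvalues of that compression, restricting $\bx$ to the part of the Gibbs measure whose top-eigenvector component is of order $N^{-1/6}$-typical. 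This keeps all quantities positive and avoids the oscillation issue entirely.
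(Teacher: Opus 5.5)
\textbf{There is a genuine gap: the lower bound on the numerator $Z^\sph_{N,2}$, which is the whole content of the proposition, is never proved.}

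What you have is fine in outline. Diagonalizing the real symmetric $2\times 2$ matrix $S$ (entries $t_1,t_2,s$) in the paper's three-variable integral does give a constant times $\iint|u-v|\,f(u)f(v)\,\de u\,\de v$, where $f(u)=e^{iNu/2}\prod_k(1+iu/a_k)^{-1/2}$ and $a_k=\gamma-\lambda_k$. The prefactor ratio is $\asymp N^{1/2}$, the heuristic $N^{1/2}\cdot N^{-2/3}$ is right, and the denominator bound by absolute values works: keeping three factors gives $\int|f|\lesssim a_3$.

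You label the numerator bound the ``main obstacle,'' and none of your suggestions closes it.
\begin{itemize}
\item For (a): since $\sum_k a_k^{-1}=N$, one has $\arg f(u)=\fr12\sum_k[u/a_k-\arctan(u/a_k)]$. At $u=\tau N^{-2/3}$, this phase and $-\log|f(u)|$ both change by order one on the same scale $\tau\asymp 1$, and both grow like $\tau^{3/2}$. So nothing keeps the argument away from $\pm\pi/2$ on the region where $|f|$ carries its mass. Moreover, the weight $|u-v|$ is not analytic, so the double contour cannot be deformed to a steepest-descent path.
\item The symmetrization you mention does not produce a squared modulus. Writing $|x|=\fr1\pi\int(1-\cos\xi x)\,\xi^{-2}\,\de\xi$ and $\hat f(\xi)=\int e^{i\xi u}f(u)\,\de u$, the numerator becomes $\fr1\pi\int\xi^{-2}[\hat f(0)^2-\hat f(\xi)\hat f(-\xi)]\,\de\xi$. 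This is a difference, and its sign is not visible without further structure.
\item For (b): $Z^\sph_{N,2}/(Z^\sph_N)^2=\la Z^\sph_{N-1}(\bx^\perp)\ra^\sph/Z^\sph_N$ is an exact identity, so this only restates the claim. Bounding the compressed partition function would need a new critical-edge analysis of a spectrum whose edge configuration differs from that of $\bW$ at scale $N^{-2/3}$. No mechanism for this is given.
\end{itemize}
Separately, your third good-event condition is false as stated. Near the edge, $\sum_j\log(z-\lambda_j)$ minus its semicircle approximation fluctuates on scale $\sqrt{\log N}$; this is exactly the source of the $\fr16\log N$ variance. Only increments along the rescaled contour are $O(1)$, which suffices after normalizing by $e^{\fr N2 G_\bLambda(\gamma)}$.

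\textbf{The missing idea is that no oscillatory analysis is needed.} Before diagonalizing, $e^{iN(t_1+t_2)/2}\prod_k\det(\bI+\fr{i}{a_k}S)^{-1/2}$ is exactly the characteristic function, at $-\fr12(t_1,t_2,s)$, of $(T_1-N,T_2-N,T_3)$. Here $T_1=\sum_k g_k^2/a_k$, $T_2=\sum_k h_k^2/a_k$, $T_3=2\sum_k g_kh_k/a_k$ for i.i.d.\ standard Gaussians $(g_k,h_k)$, and $(N,N,0)$ is the mean.

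Fourier inversion then gives $I_{\bLambda,2}=8\rho_{\bW,2}(N,N,0)$, a density evaluated at the mean. Likewise $I_\bLambda=2\rho_\bW(N)$ for $\Xi_\bW=\sum_k\xi_k^2/a_k$; in particular $\hat f(\xi)=4\pi\rho_\bW(N+2\xi)$ above. Positivity is then automatic, and both bounds become soft:
\begin{itemize}
\item Denominator: $\|\rho_\bW\|_\infty\le\fr12\sqrt{a_1a_2}\lesssim N^{-2/3}$, using the top two summands alone.
\item Numerator: split $(T_1,T_2,T_3)=U+V$. The top-$K$ part $U$ has density at least $c(K,A)N^{-2}$ on the ball of radius $N^{2/3}$ about its mean, by rescaling and a compactness/submersion argument. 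The bulk part $V$ stays within $N^{2/3}$ of its mean with probability at least $7/25$, by Chebyshev on the event $\sum_{k>K}a_k^{-2}\le N^{4/3}/100$.
\end{itemize}
Convolving gives $\rho_{\bW,2}(N,N,0)\gtrsim N^{-2}$. Hence $Z^\sph_{N,2}\gtrsim N^{3/2}\cdot N^{-2}$ and $Z^\sph_N\lesssim N^{1/2}\cdot N^{-2/3}$, up to the factors $e^{N(G_\bLambda(\gamma)-1)}$ and $e^{\fr N2(G_\bLambda(\gamma)-1)}$ respectively. This yields the ratio $N^{-1/2}/N^{-1/3}=N^{-1/6}$.
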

To prove Proposition~\ref{ppn:positive-probability-lower-bound}, we analyze the numerator and denominator separately. By rotational invariance, both $Z_N^{\sph}(\bW)$ and $Z_{N,2}^{\sph}(\bW)$ depend on $\bW$ only through its spectrum
\[
\bLambda=(\lambda_1,\dots,\lambda_N).
\]
Given $\bLambda$, define
\begin{equation}\label{eq:G_Lambda}
G_\bLambda(z)
=
z-\fr1N\sum_{k=1}^N\log(z-\lambda_k),
\end{equation}
where $\log(\cdot)$ denotes the principal branch on
$\bbC\setminus(-\infty,0]$.
Let $\gamma=\gamma(\bLambda)$ be the unique solution of
$G_\bLambda'(\gamma)=0$ in $(\lambda_1,\infty)$.
We will show that, with probability bounded away from zero over
$\bW\sim\GOE(N)$,
\[
Z_N^\sph(\bW)
\lesssim
e^{\fr N2(G_\bLambda(\gamma)-1)}\,N^{-1/6},
\qquad
Z_{N,2}^\sph(\bW)
\gtrsim
e^{N(G_\bLambda(\gamma)-1)}\,N^{-1/2}.
\]
Taking the ratio immediately yields
Proposition~\ref{ppn:positive-probability-lower-bound}.

The starting point is to obtain contour integral representations for
$Z_N^\sph$ and $Z_{N,2}^\sph$. These formulas express both partition
functions entirely in terms of the spectrum $\bLambda$ and reduce the
problem to a saddle-point analysis near the critical edge of the GOE
spectrum. We derive these representations in
\S\ref{subsec:integral-representation}. In
\S\ref{subsec:estimate-ZNsph}, we analyze the resulting contour
integrals on a typical event for $\bW\sim\GOE(N)$ and establish the above
estimates, thereby proving
Proposition~\ref{ppn:positive-probability-lower-bound} and hence
Proposition~\ref{p:J-of-zero}.

\subsection{One- and two-replica contour representations}\label{subsec:integral-representation}
In this subsection, we derive contour integral representations for
$Z_N^{\sph}$ and $Z_{N,2}^{\sph}$. These formulas express both
partition functions entirely in terms of the spectrum of $\bW$, and will
serve as the starting point for the critical-edge analysis in the next
subsection.
Throughout this subsection, we fix a matrix $\bW$ and write
$\bLambda=(\lambda_1,\dots,\lambda_N)$ for its spectrum, ordered as $\lambda_1\ge \cdots \ge \lambda_N$.

We begin with the one-replica partition function $Z_N^{\sph}$.
Recall the function $G_\bLambda$ defined in \eqref{eq:G_Lambda}, and let
$\gamma=\gamma(\bLambda)$ denote the unique critical point of
$G_\bLambda$ in $(\lambda_1,\infty)$.

\begin{lem}\label{lem:contour-integral-1}
It holds that
    \[
Z_N^\sph(\bW)=\fr{C_N}{2\pi i}\int_{\gamma-i\infty}^{\gamma+i\infty}\exp\left(\fr{N}{2}G_\bLambda(z)\right)\de z\,,
    \]
where $C_N$ is the nonrandom number given by
\[
C_N=\fr{\Gamma(N/2)}{(N/2)^{N/2-1}}=(1+o(1)) \sqrt{\pi N}e^{-N/2}\,.
\]
\end{lem}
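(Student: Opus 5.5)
The plan is to reduce $Z^\sph_N$ to a Laplace-transform inversion, as in \cite{baik2016fluctuations}. First, diagonalize $\bW = \bQ^\top \mathrm{diag}(\bLambda)\bQ$. Since $\nu_N$ is rotation invariant, we get $Z^\sph_N(\bW) = \int \exp\bigl(\fr12\sum_k \lambda_k u_k^2\bigr)\,\de\nu_N(\bu)$, which depends on $\bW$ only through $\bLambda$. Next define the function
\[
  F(r) = \int_{\sqrt r\,\bbS^{N-1}} \exp\Bigl(\fr12\sum_k \lambda_k u_k^2\Bigr)\,\de\sigma_r(\bu),
\]
where $\sigma_r$ is the unnormalized surface measure on the sphere of radius $\sqrt r$. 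Writing Lebesgue measure in polar coordinates in the variable $r=|\bg|^2$ gives $\de\bg = \fr12\,\de\sigma_r\,r^{-1/2}\,\de r$. By homogeneity, $F(N)$ equals $Z^\sph_N(\bW)$ times the surface area $|S_N| = N^{(N-1)/2}\cdot 2\pi^{N/2}/\Gamma(N/2)$.

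The key identity is the Gaussian integral. For real $t>\lambda_1/2$,
\[
  \int_{\bbR^N} e^{-t|\bg|^2 + \fr12\sum_k\lambda_k g_k^2}\,\de\bg
  = \prod_{k=1}^N \Bigl(\fr{\pi}{t-\lambda_k/2}\Bigr)^{1/2}
  = \int_0^\infty e^{-tr}\, \fr{F(r)}{2\sqrt r}\,\de r .
\]
Both sides are analytic in $\mathrm{Re}\, t>\lambda_1/2$, so the identity extends to that half-plane, using the principal branch of the square roots, which is consistent with the principal $\log$ in \eqref{eq:G_Lambda}. The function $r\mapsto F(r)/(2\sqrt r)$ is continuous and of bounded variation locally. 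Its Laplace transform decays like $|t|^{-N/2}$ on vertical lines, which is absolutely integrable once $N\ge3$. Hence the Bromwich inversion formula holds pointwise:
\[
  \fr{F(N)}{2\sqrt N} = \fr{1}{2\pi i}\int_{s-i\infty}^{s+i\infty} e^{tN}\prod_k\Bigl(\fr{\pi}{t-\lambda_k/2}\Bigr)^{1/2}\de t
\]
for any $s>\lambda_1/2$.

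Then substitute $t=z/2$. This turns the integrand into $(2\pi)^{N/2} e^{Nz/2}\prod_k(z-\lambda_k)^{-1/2} = (2\pi)^{N/2}\exp\bigl(\fr N2 G_\bLambda(z)\bigr)$, with a Jacobian factor $\fr12$. Any vertical line $\mathrm{Re}\, z=\gamma>\lambda_1$ may be used: by the $|z|^{-N/2}$ decay and Cauchy's theorem, horizontal connecting segments vanish, so the choice $\gamma=\gamma(\bLambda)$ is allowed. Collecting constants gives
\[
  Z^\sph_N = \fr{2\sqrt N\,(2\pi)^{N/2}}{2\,|S_N|}\cdot\fr{1}{2\pi i}\int \exp\Bigl(\fr N2 G_\bLambda(z)\Bigr)\de z .
\]
A direct simplification should show that $\sqrt N(2\pi)^{N/2}\Gamma(N/2)/(2\pi^{N/2}N^{(N-1)/2}) = \Gamma(N/2)/(N/2)^{N/2-1}$, up to verifying the powers of $2$; this last point is the bookkeeping I would double-check. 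Finally, Stirling's formula $\Gamma(N/2) = \sqrt{2\pi/(N/2)}\,(N/2e)^{N/2}(1+o(1))$ gives $C_N = (1+o(1))\sqrt{\pi N}e^{-N/2}$.

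The main obstacle is minor: it is justifying pointwise Laplace inversion and the contour shift, including the branch of $\prod(z-\lambda_k)^{-1/2}$. Both are handled by the $|z|^{-N/2}$ decay together with continuity of $F$, which comes from smooth dependence of the sphere integral on the radius. The constant bookkeeping is routine.
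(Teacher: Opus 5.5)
Your proposal is correct and follows the paper's proof: both compute the Laplace transform of the radial sphere integral via the factorized Gaussian integral and then apply Bromwich inversion, differing only in the radial parametrization ($r=|\bg|^2$ evaluated at $r=N$, versus the paper's $t=r^2/N$ evaluated at $t=1$). The power-of-$2$ bookkeeping you flagged does check out, since $\sqrt N(2\pi)^{N/2}\Gamma(N/2)/(2\pi^{N/2}N^{(N-1)/2}) = 2^{N/2-1}N^{1-N/2}\Gamma(N/2) = \Gamma(N/2)/(N/2)^{N/2-1}$.
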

\begin{proof}
    The result has appeared in \cite[Lemma 1.3]{baik2016fluctuations}, but we will still give a full proof for completeness and to motivate the analysis for the more complicated term $Z_{N,2}^\sph$. 

    Since $\bW$ has spectrum $\bLambda$, by change of variables we get
    \[
        Z_N^{\sph}(\bW)
        =
        \int_{S_N}
        \exp\left\{
            \fr12\sum_{k=1}^N \lambda_k x_k^2
        \right\}
        \de\nu_N(\bx).
    \]
    For $t>0$, define
    \[
        M_\bLambda(t)
        =
        \int_{S_N}
        \exp\left\{
            \fr t2\sum_{k=1}^N \lambda_k x_k^2
        \right\}
        \de\nu_N(\bx).
    \]
    Thus $Z_N^{\sph}(\bW)=M_\bLambda(1)$.

    We compute the Laplace transform of
    $t^{N/2-1}M_\bLambda(t)$. For any $z>\lambda_1$,
    \[
        L_\bLambda(z)
        :=
        \int_0^\infty
        e^{-Nzt/2}t^{N/2-1}M_\bLambda(t)\,\de t .
    \]
    Using polar coordinates $\by=r\bomega$ in $\bbR^N$, and writing
    $\bx=\sqrt N\,\bomega\in S_N$, we have
    \[
        \de\by
        =
        |\bbS^{N-1}|\,r^{N-1}\de r\,\de\nu_N(\bx)
        =
        \fr{|\bbS^{N-1}|N^{N/2}}{2}
        t^{N/2-1}\de t\,\de\nu_N(\bx),
        \qquad t=\fr{r^2}{N}.
    \]
    Therefore
    \[
        \int_{\bbR^N}
        \exp\left\{
            -\fr12\sum_{k=1}^N (z-\lambda_k)y_k^2
        \right\}\,\de\by=
        \fr{|\bbS^{N-1}|N^{N/2}}{2}
        \int_0^\infty
        e^{-Nzt/2}t^{N/2-1}M_\bLambda(t)\,\de t
        =
        \fr{|\bbS^{N-1}|N^{N/2}}{2}
        L_\bLambda(z).
    \]
    On the other hand, the same Gaussian integral factorizes as
    \[
        \int_{\bbR^N}
        \exp\left\{
            -\fr12\sum_{k=1}^N (z-\lambda_k)y_k^2
        \right\}\de\by
        =
        (2\pi)^{N/2}
        \prod_{k=1}^N (z-\lambda_k)^{-1/2}.
    \]
    Since
    \[
        |\bbS^{N-1}|=\fr{2\pi^{N/2}}{\Gamma(N/2)},
    \]
    we obtain
    \[
        L_\bLambda(z)
        =
        \Gamma(N/2)
        \left(\fr{2}{N}\right)^{N/2}
        \prod_{k=1}^N (z-\lambda_k)^{-1/2}.
    \]
    By the inverse Laplace transform formula (i.e. the Bromwich inversion formula \cite{widder1946laplace}), for any $\gamma>\lambda_1$ (and we choose $\gamma$ as the saddle point $\gamma = \gamma(\bLambda)$ for later convenience),
    \[
        M_\bLambda(1)
        =
        \fr{N}{2}\fr{1}{2\pi i}
        \int_{\gamma-i\infty}^{\gamma+i\infty}
        e^{Nz/2}L_\bLambda(z)\,\de z .
    \]
    Substituting the expression for $L_\bLambda(z)$ gives
    \[
        Z_N^{\sph}(\bW)
        =
        \fr{\Gamma(N/2)}{(N/2)^{N/2-1}}
        \fr{1}{2\pi i}
        \int_{\gamma-i\infty}^{\gamma+i\infty}
        \exp\left\{
            \fr N2 z
            -\fr12\sum_{k=1}^N \log(z-\lambda_k)
        \right\}
        \de z .
    \]
    Since the contour lies to the right of the spectrum,
    \[
        z-\fr1N\sum_{k=1}^N\log(z-\lambda_k)
        =
        G_\bLambda(z).
    \]
    Hence
    \[
        Z_N^{\sph}(\bW)
        =
        \fr{C_N}{2\pi i}
        \int_{\gamma-i\infty}^{\gamma+i\infty}
        \exp\left\{
            \fr N2 G_\bLambda(z)
        \right\}
        \de z,
        \qquad
        C_N
        =
        \fr{\Gamma(N/2)}
             {(N/2)^{N/2-1}} .
    \]
    Finally, Stirling's formula gives
    \[
        C_N
        =
        (1+o(1))\sqrt{\pi N}\,e^{-N/2},
    \]
    which completes the proof. 
\end{proof}

Now we turn to the more complicated $Z_{N,2}^\sph$. Since $Z_{N,2}^{\sph}$ involves two orthogonal replicas, the natural analogue of the scalar Laplace transform is a matrix-valued Laplace transform on the cone $\Sym_2^+$ of positive definite $2\times 2$ symmetric matrices. For
    $Z\in \Sym_2(\bbC)$, write
    \[
        Z=
        \begin{pmatrix}
            z_1 & w \\
            w & z_2
        \end{pmatrix}.
    \]
    Define (again, $\log(\cdot)$ denotes the principal branch on $\bbC\setminus(-\infty,0]$)
    \[
        G_{\bLambda,2}(z_1,z_2,w)
        =
        z_1+z_2
        -
        \fr1N
        \sum_{k=1}^N
        \log\bigl((z_1-\lambda_k)(z_2-\lambda_k)-w^2\bigr).
    \]

\begin{lem}\label{lem:spherical-two-contour}
It holds that
    \[
        Z_{N,2}^{\sph}(\bW)
        =
        \fr{C_{N,2}}{(2\pi i)^3}
        \int_{(\gamma,\gamma,0)+(i\bbR)^3}
        \exp\left\{
            \fr N2
            G_{\bLambda,2}(z_1,z_2,w)
        \right\}
        \de z_1\,\de z_2\,\de w ,
    \]
where
\[
    C_{N,2}=\fr{2\sqrt{\pi}\Gamma(N/2)\Gamma((N-1)/2)}{(N/2)^{N-3}} =(1+o(1))2^{1/2}\pi^{3/2}
    N^{3/2} e^{-N}\,.
\]
\end{lem}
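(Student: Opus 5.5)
The plan is to mimic the proof of Lemma~\ref{lem:contour-integral-1}, replacing the scalar Laplace transform in $t$ by a Laplace transform on the cone $\Sym_2^+$ of $2\times 2$ positive definite matrices.

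\textbf{Matrix generating function.} After rotating to the eigenbasis of $\bW$, write $\bLambda=\mathrm{diag}(\lambda_k)$. For $\bx,\by$ arranged as the columns of $\bQ\in\bbR^{N\times 2}$, set
\[
  \bA(\bQ)=\tfrac1N\bQ^\top\bLambda\bQ\in\Sym_2 .
\]
For $T\in\Sym_2^+$ define
\[
  M_\bLambda(T)=\int_{S_{N,2}}\exp\Bigl\{\tfrac N2\Tr\bigl(T^{1/2}\bA(\bQ)T^{1/2}\bigr)\Bigr\}\,\de\nu_{N,2}(\bx,\by).
\]
Then $Z^\sph_{N,2}=M_\bLambda(I_2)$.

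\textbf{Laplace transform via a Gaussian integral.} For complex symmetric
\[
  Z=\begin{pmatrix}z_1&w\\w&z_2\end{pmatrix}\quad\text{with } \mathrm{Re}\,Z-\lambda_1 I_2\succ0,
\]
I compute $\int_{\bbR^{N\times 2}}\exp\{-\tfrac12\sum_k (\bY_k)^\top(Z-\lambda_kI_2)\bY_k\}\,\de\bY$ in two ways, where $\bY_k$ denotes the $k$-th row of $\bY$.
\begin{itemize}
  \item The integral factorizes over rows $k$ and equals $(2\pi)^N\prod_k\bigl((z_1-\lambda_k)(z_2-\lambda_k)-w^2\bigr)^{-1/2}$.
  \item Alternatively, use the polar decomposition $\bY=\bQ\,T^{1/2}$ with $T=\bY^\top\bY/N\in\Sym_2^+$ and $\bQ/\sqrt N$ on the Stiefel manifold. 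Its Jacobian is $\de\bY=c_N\det(T)^{(N-3)/2}\,\de T\,\de\nu_{N,2}$, where $c_N$ involves the Stiefel volume $\propto\pi^{N-1/2}/(\Gamma(N/2)\Gamma((N-1)/2))$ and powers of $N$. This rewrites the integral as $c_N L_\bLambda(Z)$, with
  \[
    L_\bLambda(Z)=\int_{\Sym_2^+}e^{-\fr N2\Tr(ZT)}\det(T)^{(N-3)/2}M_\bLambda(T)\,\de T .
  \]
\end{itemize}
Here I use that $\Tr(\bY^\top\bLambda\bY)=N\Tr(T^{1/2}\bA T^{1/2})$.

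\textbf{Inversion and bookkeeping.} With coordinates $T=\begin{pmatrix}t_1&s\\s&t_2\end{pmatrix}$, we have $\Tr(ZT)=z_1t_1+z_2t_2+2ws$, so $L_\bLambda$ is a three-variable Laplace transform in $(t_1,t_2,s)$ with dual variables $(z_1,z_2,2w)$. Next I apply three-dimensional Bromwich inversion along $(\gamma,\gamma,0)+(i\bbR)^3$; this contour is admissible since $\gamma>\lambda_1$. Evaluating at $T=I_2$, where $\det T=1$, gives
\[
  Z^\sph_{N,2}=(N/2)^3\cdot 2\cdot\frac{1}{(2\pi i)^3}\int e^{\fr N2(z_1+z_2)}L_\bLambda(Z)\,\de z_1\,\de z_2\,\de w .
\]
Substituting the product formula then produces $\exp\{\fr N2G_{\bLambda,2}\}$ times a constant. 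Collecting $\Gamma(N/2)$, $\Gamma((N-1)/2)$, the powers of $2$, $\pi$ and $N$ should give $C_{N,2}$, and Stirling's formula gives its asymptotics.

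\textbf{Expected obstacles.}
\begin{itemize}
  \item The main obstacle is justifying the inversion. The integrand decays like $|Z|^{-N/2}$ in each direction, so it is absolutely integrable for $N\ge 4$. In addition, $M_\bLambda(T)\det(T)^{(N-3)/2}$ must be continuous at $I_2$, which holds since it is smooth in the interior of the cone.
  \item A second obstacle is branch consistency. I must check that the analytic square root from the Gaussian integral matches the principal branch in $G_{\bLambda,2}$. I would argue this by noting that the expression agrees on real $Z\succ\lambda_1I_2$. Along the contour, $(z_1-\lambda_k)(z_2-\lambda_k)-w^2$ should avoid $(-\infty,0]$. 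For instance, on the contour with $z_j=\gamma+iy_j$ and $w=iv$, it equals $(\gamma-\lambda_k)^2-y_1y_2+v^2+i(\gamma-\lambda_k)(y_1+y_2)$. If this fails on part of the contour, I would instead interpret the logarithm by continuous continuation from the real point, which is what the saddle-point analysis actually uses.
\end{itemize}
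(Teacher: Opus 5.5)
Your proposal is correct and is essentially the paper's proof. The paper likewise takes the Laplace transform of $\det(S)^{(N-3)/2}M_\bLambda(S)$ over $\Sym_2^+$, evaluates it through the matrix polar decomposition and the row-by-row Gaussian integral, and inverts it (citing Faraut--Kor\'anyi, while noting that iterated Bromwich inversion in the three coordinates also works). It then picks up the factor $2$ from $\de Z = 2\,\de z_1\,\de z_2\,\de w$. Your rescaling $T=\boldsymbol{Y}^\top\boldsymbol{Y}/N$ is cosmetic, and your constant $2(N/2)^3(2\pi)^N/c_N$, with $c_N=\pi^N N^N/(\sqrt{\pi}\,\Gamma(N/2)\Gamma((N-1)/2))$, does reduce to $C_{N,2}$.

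Your branch concern resolves immediately: on the contour, $\det(Z-\lambda_k I_2)=(a_k+i\mu_1)(a_k+i\mu_2)$, where $a_k=\gamma-\lambda_k>0$ and $\mu_1,\mu_2$ are the eigenvalues of the real symmetric matrix $\mathrm{Im}\,Z$, so its argument lies in $(-\pi,\pi)$. As a harmless nit, absolute integrability of the inverse transform needs $N\ge 5$ rather than $N\ge 4$.
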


\begin{proof}
    By slight abuse of notation, let $\bLambda = \operatorname{diag}(\lambda_1,\dots,\lambda_N)$.
    By rotational invariance we may assume $\bW = \bLambda$.
    Write
    \[
        \bx=\sqrt N \bu,\qquad \by=\sqrt N \bv,
    \]
    where
    \[
        \bQ=(\bu,\bv)\in V_{N,2}
        :=
        \{\bQ\in \bbR^{N\times 2}: \bQ^\top \bQ=I_2\}.
    \]
    Then
    \[
        H_N(\bx)+H_N(\by)
        =
        \fr N2 \Tr(\bQ^\top \bLambda \bQ),
    \]
    and therefore
    \[
        Z_{N,2}^{\sph}(\bW)
        =
        \int_{V_{N,2}}
        \exp\left\{
            \fr N2 \Tr(\bQ^\top \bLambda \bQ)
        \right\}
        \de\nu_{N,2}(\bQ).
    \]
    For $S\in \Sym_2^+$, define
    \[
        M_\bLambda(S)
        =
        \int_{V_{N,2}}
        \exp\left\{
            \fr N2
            \Tr\bigl(S^{1/2}\bQ^\top \bLambda \bQ S^{1/2}\bigr)
        \right\}
        \de\nu_{N,2}(\bQ).
    \]
    Thus $Z_{N,2}^\sph(\bW)=M_\bLambda(I_2)$.
    
    We compute the Laplace transform of
    $\det(S)^{(N-3)/2}M_\bLambda(S)$. For
    $Z\in \Sym_2(\bbR)$ satisfying $Z-\lambda_1 I_2>0$, set
    \[
        L_\bLambda(Z)
        =
        \int_{\Sym_2^+}
        e^{-\fr N2\Tr(ZS)}
        \det(S)^{(N-3)/2}
        M_\bLambda(S)\,\de S .
    \]
    For $\bX\in \bbR^{N\times 2}$, use the matrix polar decomposition
    \[
        \bX=\bQ S^{1/2},\qquad \bQ\in V_{N,2},\qquad S=\bX^\top \bX\in \Sym_2^+ .
    \]
    The matrix polar-coordinate formula gives
    \[
        \de \bX
        =
        c_{N,2}\det(S)^{(N-3)/2}\,\de S\,\de\nu_{N,2}(\bQ),
    \]
    where \[
    c_{N,2}
    =
    \fr{\pi^N}{\Gamma_2(N/2)}=
    \fr{\pi^N}
    {\sqrt{\pi}\,\Gamma(N/2)\Gamma((N-1)/2)} .
\]
Hence
    \[
        L_\bLambda(Z)
        =
        c_{N,2}^{-1}
        \int_{\bbR^{N\times 2}}
        \exp\left\{
            -\fr N2\Tr(Z\bX^\top \bX)
            +
            \fr N2\Tr(\bX^\top \bLambda \bX)
        \right\}
        \,\de \bX .
    \]
    Writing $x_k\in \bbR^2$ for the $k$-th row of $\bX$, the integral factorizes:
    \[
        L_\bLambda(Z)
        =
        c_{N,2}^{-1}
        \prod_{k=1}^N
        \int_{\bbR^2}
        \exp\left\{
            -\fr N2 x_k^\top (Z-\lambda_k I_2)x_k
        \right\}
        \de x_k .
    \]
    Therefore
    \[
        L_\bLambda(Z)
        =
        c_{N,2}^{-1}
        \left(\fr{2\pi}{N}\right)^N
        \prod_{k=1}^N
        \det(Z-\lambda_k I_2)^{-1/2}.
    \]
By the matrix inversion formula for the Laplace transform on $\Sym_2$
(see, e.g., \cite[proof of Proposition XV.2.2]{FK94}, where the inversion formula is used), 
 evaluating at
    $S=I_2$ gives
    \begin{equation*}\label{e:2-replica-contour}
        M_\bLambda(I_2)
        =
        \fr{\tilde{C}_{N,2}}{(2\pi i)^3}
        \int_{\Gamma}
        e^{\fr N2\Tr Z}
        \prod_{k=1}^N
        \det(Z-\lambda_k I_2)^{-1/2}
        \de Z ,
    \end{equation*}
    where $\de Z$ denotes the Lebesgue measure on $\Sym_2(\bbC)$ dual
to the standard coordinate measure $\de S=\de s_{11}\de s_{22}\de s_{12}$ under
the trace pairing. Moreover, the constant $\tilde{C}_{N,2}$ is explicitly given by
\[
    \tilde{C}_{N,2}
    =\left(\fr N 2\right)^3\left(\fr{2\pi}{N}\right)^N c_{N,2}^{-1}=
    \fr{\sqrt{\pi}\,
    \Gamma(N/2)\Gamma((N-1)/2)}
    {(N/2)^{N-3}},
\]
and $\Gamma=\Gamma_0+i\Sym_2(\bbR)$ for any $\Gamma_0-\lambda_1I_2>0$ (and we take $\Gamma_0=\gamma I_2$ for later convenience).  Alternatively, from a more elementary perspective, since $\Sym_2\cong \mathbb{R}^3$, one may also derive the above contour representation by applying the classic Bromwich inversion formula \cite{widder1946laplace} successively in the three coordinates. 

Writing
\[
Z=\begin{pmatrix}z_1&w\\ w&z_2\end{pmatrix},
\qquad
\Tr(ZS)=z_1s_{11}+z_2s_{22}+2ws_{12},
\]
we have
\[
\de Z = \de z_1\,\de z_2\,\de (2w)=2\,\de z_1\,\de z_2\,\de w,
\]
and thus integration over $\Gamma$ of $\de Z$ is equivalent to integration over $(\gamma,\gamma,0)+(i\bbR)^3$ of $2\de z_1\de z_2 \de w$. Moreover,
    \[
        \Tr Z=z_1+z_2,\quad 
        \det(Z-\lambda_k I_2)
        =
        (z_1-\lambda_k)(z_2-\lambda_k)-w^2.
    \]
    Hence
    \[
        e^{\fr N2\Tr Z}
        \prod_{k=1}^N
        \det(Z-\lambda_k I_2)^{-1/2}
        =
        \exp\left\{
            \fr N2
            G_{\bLambda,2}(z_1,z_2,w)
        \right\}.
    \]
This yields that
    \[
        Z_{N,2}^{\sph}(\bW)
        =
        M_\bLambda(I_2)
        =
        \fr{C_{N,2}}{(2\pi i)^3}
        \int_{(\gamma,\gamma,0)+(i\bbR)^3}
        \exp\left\{
            \fr N2
            G_{\bLambda,2}(z_1,z_2,w)
        \right\}
        \de z_1\,\de z_2\,\de w ,
    \]
where $C_{N,2}=2\tilde{C}_{N,2}$ as desired. The asymptotics of $C_{N,2}$ follow from Stirling's formula.
\end{proof}

\subsection{Critical-edge bounds for the contour integrals}
\label{subsec:estimate-ZNsph}

We now analyze the contour integral representations obtained in
Lemmas~\ref{lem:contour-integral-1} and
\ref{lem:spherical-two-contour}. Our goal is to prove an upper bound for
$Z_N^\sph$ and a lower bound for $Z_{N,2}^\sph$ on a typical event for
$\bW\sim\GOE(N)$. Combined, these estimates will imply
Proposition~\ref{ppn:positive-probability-lower-bound}.

We begin with the one-replica partition function $Z_N^\sph$.

\begin{ppn}\label{ppn:denominator-upper-bound}
    There exists a universal constant $C>0$ such that with probability at least $0.9$,
    \[
        Z_N^\sph
        \le
        C N^{-1/6}e^{-N/2}
        \exp\left(\fr N2G_\bLambda(\gamma)\right).
    \]
\end{ppn}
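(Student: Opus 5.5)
The starting point is the contour representation of Lemma~\ref{lem:contour-integral-1}. On the vertical line $z=\gamma+it$ we bound the integrand by its modulus:
\[
  Z_N^\sph \le \fr{C_N}{2\pi}\int_{\bbR} \exp\lt(\fr N2 \Re G_\bLambda(\gamma+it)\rt)\de t\,,
\]
where
\[
  \Re G_\bLambda(\gamma+it)-G_\bLambda(\gamma) = -\fr1{2N}\sum_k \log\lt(1+\fr{t^2}{(\gamma-\lambda_k)^2}\rt)\,.
\]
Since $C_N\asymp \sqrt N e^{-N/2}$, it suffices to show that on an event of probability at least $0.9$,
\[
  \int_\bbR \prod_k\lt(1+\fr{t^2}{(\gamma-\lambda_k)^2}\rt)^{-1/4}\de t \lesssim N^{-2/3}\,.
\]
Each factor is at most $1$, so the integrand decreases in $|t|$. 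The whole task is therefore to understand $\gamma$ and the local scale $|t|\asymp N^{-2/3}$.

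\textbf{Typical event.} We use edge rigidity for the GOE. With probability at least $0.95$ (for a large constant $C_0$):
\begin{itemize}
  \item $|\lambda_1-2|\le C_0N^{-2/3}$;
  \item eigenvalue rigidity $|\lambda_k-\gamma_k|\lesssim k^{-1/3}N^{-2/3}$ holds up to $\log$ factors, with $\gamma_k$ the semicircle classical locations.
\end{itemize}
On this event we locate $\gamma$ through the equation
\[
  G'_\bLambda(\gamma)=1-\fr1N\sum_k(\gamma-\lambda_k)^{-1}=0\,,
\]
i.e.\ $m_N(\gamma)=1$ for the Stieltjes transform $m_N(z)=\fr1N\sum_k(z-\lambda_k)^{-1}$. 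For the semicircle law, $m(z)=1$ exactly at $z=2$, and $m(2+\eta)\approx 1-\sqrt\eta$. The near-edge eigenvalues contribute $\fr1N\sum_{k\lesssim K}(\gamma-\lambda_k)^{-1}$. Rigidity should then give $\gamma-\lambda_1\asymp N^{-2/3}$ with constants, on an event of probability at least $0.9$. The upper bound on $\gamma-\lambda_1$ comes from $m_N(\lambda_1+CN^{-2/3})<1$. The lower bound comes from the single term $\fr1N(\gamma-\lambda_1)^{-1}\le 1$, giving $\gamma-\lambda_1\ge N^{-1}$. To get order $N^{-2/3}$ we also need the gap $\lambda_1-\lambda_2\gtrsim N^{-2/3}$ with probability close to $1$. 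We may not need this: for the integral bound, an upper bound $\gamma-\lambda_k\lesssim k^{2/3}N^{-2/3}$ (up to logs) suffices.

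\textbf{Integral bound.} Using only the indices $k\le N^{\delta}$ up to $k\asymp N$, we have $\gamma-\lambda_k\le C(k/N)^{2/3}$ for $k\ge k_0$. For $|t|\ge N^{-2/3}$ the number of $k$ with $\gamma-\lambda_k\le |t|$ is $\gtrsim N|t|^{3/2}$, each contributing a factor at most $2^{-1/4}$. Hence the integrand is at most
\[
  \exp\lt(-cN|t|^{3/2}\rt)\,.
\]
For large $|t|$ we add the global bound: the bulk contributes $\exp\lt(-cN\log(1+t^2)\rt)$, which is integrable for $|t|\ge1$. Integrating $\min\lt(1,e^{-cN|t|^{3/2}}\rt)$ gives $O(N^{-2/3})$. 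Combined with $\sqrt N$ this yields
\[
  Z_N^\sph \le C N^{1/2}N^{-2/3}e^{-N/2}e^{\fr N2 G_\bLambda(\gamma)} = CN^{-1/6}e^{-N/2}e^{\fr N2 G_\bLambda(\gamma)}\,,
\]
which is the bound claimed in Proposition~\ref{ppn:denominator-upper-bound}.

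\textbf{Main obstacle.} The delicate step is the eigenvalue counting near $\gamma$ at the scale $N^{-2/3}$. We need: with probability at least $0.9$, for all $t\in[C N^{-2/3},1]$, $\#\{k:\gamma-\lambda_k\le t\}\ge cNt^{3/2}$. The plan is to derive this from edge rigidity, namely $\lambda_k\ge 2-C(k/N)^{2/3}\log N$, together with the upper bound $\gamma\le 2+CN^{-2/3}$. The logarithmic loss is a problem. If it appears, we would use the sharper optimal-rigidity bounds near the edge, valid with probability $1-\epsilon$ without logs for $k\le K$. Alternatively, we would use the weaker count $\#\ge cNt^{3/2}$ only for $t\ge N^{-2/3}K$, and handle $t\in[N^{-2/3},KN^{-2/3}]$ trivially by bounding the integrand by $1$. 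That interval only adds $O(KN^{-2/3})$, which is harmless.
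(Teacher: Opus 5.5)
Your reduction is correct: bounding the Bromwich integrand of Lemma~\ref{lem:contour-integral-1} by its modulus, it suffices to show $\int_\bbR\prod_k(1+t^2/a_k^2)^{-1/4}\,\de t\lesssim N^{-2/3}$, where $a_k=\gamma-\lambda_k$. This is a different route from the paper's.

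The paper identifies the normalized integral as $2\rho_\bW(N)$, where $\rho_\bW$ is the density of $\sum_k\xi_k^2/a_k$. Convolution with a probability density cannot increase the sup norm, so it keeps only the top two terms and gets $\|\rho_\bW\|_\infty\le\fr12\sqrt{a_1a_2}$. The only random-matrix inputs are then $\gamma-\lambda_1\lesssim N^{-2/3}$ and $\lambda_1-\lambda_2\lesssim N^{-2/3}$, each holding with probability $0.95$. The density formulation is also what the paper reuses for the two-replica \emph{lower} bound (Proposition~\ref{prop:numerator-lower-bound}), where bounding moduli is useless.

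As written, your route is much heavier than needed, and its hardest step is self-inflicted. Bounding each factor with $a_k\le|t|$ by $2^{-1/4}$ throws away the polynomial decay. It forces you to prove $\#\{k:a_k\le t\}\gtrsim Nt^{3/2}$ uniformly for $t\in[CN^{-2/3},1]$.

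That bound needs log-free edge counting on every scale up to $N^{-2/3+\delta}$, for instance the Landon--Sosoe estimate used in Lemma~\ref{lem:typical-event} with a dyadic union bound. Tightness for $k\le K$ does not cover $K\le k\le N^\delta$. Your ``alternatively'' fallback also fails: absorbing a $\log N$ or $N^\xi$ rigidity loss makes the trivially bounded interval have length $N^{-2/3}\log N$, not $KN^{-2/3}$ with $K$ constant.

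You can avoid all of this. Since every factor is at most $1$ and $a_1\le a_2\le a_3$,
\[
  \int_\bbR \prod_{k=1}^N\lt(1+\fr{t^2}{a_k^2}\rt)^{-1/4}\de t
  \le \int_\bbR \lt(1+\fr{t^2}{a_3^2}\rt)^{-3/4}\de t
  = a_3\int_\bbR (1+u^2)^{-3/4}\,\de u \lesssim a_3\,.
\]
So it suffices that $a_3=(\gamma-\lambda_1)+(\lambda_1-\lambda_3)\lesssim N^{-2/3}$, which follows from the same two inputs as the paper's proof. On the Fourier side you need three factors for integrability; on the density side two suffice.

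Separately, $\gamma-\lambda_1\lesssim N^{-2/3}$ does not follow from rigidity as you suggest. At distance $N^{-2/3}$ from the edge, the fluctuations of $m_N$ are of the same order $N^{-1/3}$ as $1-m$, and this is exactly the scale where rigidity loses $N^\xi$. It is a genuine edge input and should be cited: it is \cite[Proposition~3.1]{landon2022free}.
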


\begin{proof}
    By Lemma~\ref{lem:contour-integral-1},
    \[
        Z_N^\sph
        =
        \fr{C_N}{2\pi i}
        \int_{\gamma-i\infty}^{\gamma+i\infty}
        \exp\left\{
            \fr N2G_\bLambda(z)
        \right\}\de z.
    \]
    Define
    \[
        I_\bLambda
        :=
        \fr{1}{2\pi i}
        \int_{\gamma-i\infty}^{\gamma+i\infty}
        \exp\left\{
            \fr N2\bigl(G_\bLambda(z)-G_\bLambda(\gamma)\bigr)
        \right\}\de z.
    \]
    Then
    \[
        Z_N^\sph
        =
        C_N
        \exp\left\{
            \fr N2G_\bLambda(\gamma)
        \right\}
        I_\bLambda .
    \]
    We claim that $I_\bLambda > 0$ almost surely, and for a universal constant $C>0$, 
    \beq\label{e:denominator-ub-goal}
      \bbP(I_\bLambda \le CN^{-2/3}) \ge 0.9\,.
    \eeq
    Together with the asymptotics of $C_N$ from Lemma~\ref{lem:contour-integral-1} this implies the conclusion.

    We turn to the proof of \eqref{e:denominator-ub-goal}.
    Write $a_k=\gamma-\lambda_k>0,k=1,\dots,N$. 
    Parametrize the contour by $z=\gamma+it$. Since
    \[
    G_\bLambda(\gamma+it)-G_\bLambda(\gamma)=it-\fr{1}{N}\sum_{k=1}^N\log\left(1+\fr{it}{a_k}\right),
    \]
    we have
    \[
        I_\bLambda
        =
        \fr{1}{2\pi}
        \int_{-\infty}^{\infty}
        e^{iNt/2}
        \prod_{k=1}^N
        \left(
            1+\fr{it}{a_k}
        \right)^{-1/2}
        \,\de t .
    \]
    Let $\xi_1,\dots,\xi_N$ be i.i.d. standard Gaussian random variables, conditional on $\bW$, and set
    \[
        \Xi_\bW
        =
        \sum_{k=1}^N
        \fr{\xi_k^2}{a_k}.
    \]
    Let $\bE$ (as opposed to $\bbE$) denote expectation over the $\xi_k$ conditional on $\bW$.
    Since $G_\bLambda'(\gamma)=0$, we have
    \[
        \fr1N\sum_{k=1}^N \fr1{a_k}=1.
    \]
    This means that $\bE [\Xi_\bW] =N$. Moreover, the characteristic function of $\Xi_\bW$ (conditional on $\bW$) is
    \[
        \bE e^{it\Xi_\bW/2}
        =
        \prod_{k=1}^N
        \left(
            1-\fr{it}{a_k}
        \right)^{-1/2}.
    \]
    Therefore, by Fourier inversion and change of variables $u=-t/2$,
    \[
        I_\bLambda
        =
        2\rho_\bW(N),
    \]
    where $\rho_\bW$ denotes the density of $\Xi_\bW$.
    This shows that $I_\bLambda > 0$ almost surely.

      We now bound the density $\rho_\bW$. Write
    \[
        \Xi_\bW
        =
        X
        +
        Y,\quad X=\fr{\xi_1^2}{a_1}
        +
        \fr{\xi_2^2}{a_2},\quad
        Y=\sum_{k=3}^N\fr{\xi_k^2}{a_k}\,.
    \]
    Since $X,Y$ are independent and convolution with a probability
    density does not increase the $L^\infty$ norm, 
    \[
        \|\rho_\bW\|_\infty=\|\rho_{X}\ast \rho_Y\|_\infty
        \le
        \left\|
        \rho_{X}
        \right\|_\infty .
    \]
   Note that
$    X={\xi_1^2}/{a_1}+{\xi_2^2}/{a_2}$ 
and $\xi_1,\xi_2$ are independent. Also, the random variable $\xi_k^2/a_k$ has density
\[
    f_k(x)=\frac{\sqrt{a_k}}{\sqrt{2\pi x}}e^{-a_k x/2},
    \qquad x>0.
\]
The density of $X$ is the convolution $f_1*f_2$. Therefore, for $x>0$,
\[
\begin{aligned}
    \rho_X(x)
    &=
    \int_0^x
    \frac{\sqrt{a_1}}{\sqrt{2\pi y}}
    e^{-a_1y/2}
    \frac{\sqrt{a_2}}{\sqrt{2\pi(x-y)}}
    e^{-a_2(x-y)/2}
    \,\de y  \\
    &=
    \frac{\sqrt{a_1a_2}}{2\pi}
    e^{-a_2x/2}
    \int_0^x
    e^{-(a_1-a_2)y/2}
    \frac{\de y}{\sqrt{y(x-y)}}  \le
    \frac{\sqrt{a_1a_2}}{2\pi}
    \int_0^x
    \frac{\de y}{\sqrt{y(x-y)}}.
\end{aligned}
\]
    Since
    \[
        \int_0^x
        \fr{\de y}{\sqrt{y(x-y)}}
        =
        \pi ,
    \]
    we obtain that
    \[
        \|\rho_\bW\|_\infty
        \le\|\rho_X\|_\infty\le
        \fr12\sqrt{a_1a_2}.
    \]
    By \cite[Proposition~3.1]{landon2022free}, there exists a universal $\tilde{C} > 0$ such that
    \[
      \bbP(a_1 \le \tilde{C} N^{-2/3})
      = \bbP(\gamma - \lambda_1 \le \tilde{C} N^{-2/3}) \ge 0.95\,.
    \]
    By \cite[Theorem~4.5.42]{anderson2010introduction}, after possibly adjusting $\tilde{C}$ we also have
    \[
      \bbP(\lambda_1 - \lambda_2 \le \tilde{C} N^{-2/3}) \ge 0.95\,.
    \]
    On the intersection of these two events, which holds with probability at least $0.9$, we have $\max(a_1,a_2) \le 2\tilde{C} N^{-2/3}$.
   Then
   \[
   I_\bLambda=2\rho_\bW(N)\le 2\|\rho_\bW\|_\infty\le 2\tilde{C} N^{-2/3}\,.
   \]
   This proves \eqref{e:denominator-ub-goal} and the result follows.
\end{proof}

We now turn to the two-replica partition function $Z_{N,2}^{\sph}$.

\begin{ppn}\label{prop:numerator-lower-bound}
    There exists a universal constant $c>0$ such that with probability at least
    $0.9$,
    \[
        Z_{N,2}^{\sph}
        \ge
        c N^{-1/2} e^{-N}
        \exp(N G_\bLambda(\gamma)).
    \]
\end{ppn}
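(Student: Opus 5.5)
The plan is to mirror the proof of Proposition~\ref{ppn:denominator-upper-bound} and write the three-dimensional contour integral of Lemma~\ref{lem:spherical-two-contour} as the density of a random $2\times 2$ matrix evaluated at $NI_2$. We then lower bound that density through a splitting into top-edge and bulk contributions.

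\textbf{Step 1: probabilistic representation.} Parametrize $\Gamma=\gamma I_2+i\Sym_2(\bbR)$ by $Z=\gamma I_2+iT$ and write $a_k=\gamma-\lambda_k>0$. Then
\[
e^{\fr N2(G_{\bLambda,2}(Z)-2G_\bLambda(\gamma))}=e^{\fr{iN}{2}\Tr T}\prod_{k=1}^N\det(I_2+iT/a_k)^{-1/2}.
\]
Let $\xi_1,\dots,\xi_N$ be i.i.d.\ standard Gaussians in $\bbR^2$, independent of $\bW$, and set
\[
\bS=\sum_{k=1}^N a_k^{-1}\xi_k\xi_k^\top .
\]
Its characteristic function is $\bE e^{\fr i2\Tr(T'\bS)}=\prod_k\det(I_2-iT'/a_k)^{-1/2}$. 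Fourier inversion on $\Sym_2\cong\bbR^3$ (with the factor $2$ for the off-diagonal coordinate) then gives
\[
Z^\sph_{N,2}=C'\,C_{N,2}\,e^{NG_\bLambda(\gamma)}\,\rho_\bS(NI_2)
\]
for an absolute constant $C'>0$. Here $\rho_\bS$ is the density of $\bS$ in the coordinates $(s_{11},s_{22},s_{12})$. In particular the integral is positive. Since $C_{N,2}\asymp N^{3/2}e^{-N}$, it suffices to show $\rho_\bS(NI_2)\ge cN^{-2}$ with probability at least $0.9$. This is the natural scale, because the relevant fluctuations of $\bS$ are of order $N^{2/3}$ in each of three coordinates. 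Note also that $\bE\bS=NI_2$, by $G'_\bLambda(\gamma)=0$.

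\textbf{Step 2: splitting.} Fix a large constant $K$ and write $\bS=\bX+\bY$. Here $\bX=\sum_{k\le K}a_k^{-1}\xi_k\xi_k^\top$ and $\bY$ is the rest. These are independent, so
\[
\rho_\bS(NI_2)=\bE_\bY\,\rho_\bX(NI_2-\bY).
\]
We work on a typical event $\cE$ for $\bW$ on which two things hold. First, $c_K N^{-2/3}\le a_k\le C_KN^{-2/3}$ for $k\le K$. Second, $\sum_{k>K}a_k^{-2}\le C N^{4/3}K^{-1/3}$. Both should follow from \cite[Proposition~3.1]{landon2022free}, edge rigidity ($a_k\gtrsim (k/N)^{2/3}$) and tightness of the rescaled top $K$ eigenvalues, with $\bbP(\cE)\ge 0.9$ after tuning constants. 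On $\cE$, we have
\[
\bE\bY=\Big(N-\sum_{k\le K}a_k^{-1}\Big)I_2 ,
\]
and each entry of $\bY$ has variance $\lesssim\sum_{k>K}a_k^{-2}\lesssim N^{4/3}K^{-1/3}$. By Chebyshev, with $\bE_\bY$-probability at least $1/2$, $\bY$ lies within $O(N^{2/3}K^{-1/6})$ of its mean. On this event, $NI_2-\bY$ lies in the ball $B$ of radius $CN^{2/3}K^{-1/6}$ around $\mu I_2$, where $\mu=\sum_{k\le K}a_k^{-1}$.

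\textbf{Step 3: density of the edge part (main obstacle).} Rescale $\bX'=N^{-2/3}\bX=\sum_{k\le K}b_k\xi_k\xi_k^\top$ with $b_k=(N^{2/3}a_k)^{-1}\in[C_K^{-1},c_K^{-1}]$. Then $\rho_\bX(\bM)=N^{-2}\rho_{\bX'}(N^{-2/3}\bM)$, and it remains to bound $\rho_{\bX'}$ from below, uniformly over the rescaled ball $N^{-2/3}B$. That ball has center $\mu'I_2$ with $\mu'=N^{-2/3}\mu\ge\sum_{k\le K}c_K\gtrsim K^{1/3}$ (rigidity gives $a_k\lesssim (k/N)^{2/3}$), and radius $CK^{-1/6}$. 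For $K\ge3$, the density of $\sum_k b_k\xi_k\xi_k^\top$ is continuous and strictly positive on the interior of the PSD cone. It is also jointly continuous in $(b_k)$ on a compact set of coefficients. The target set is a compact subset of the open cone, since $\mu'\gg K^{-1/6}$ for $K$ large. Compactness therefore gives $\rho_{\bX'}\ge c_K>0$ on it, so $\rho_\bS(NI_2)\ge \tfrac12 c_K N^{-2}$ on $\cE$.

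The delicate points are the quantitative ones. First, the constants $c_K,C_K$ in the edge eigenvalue bounds must hold simultaneously with probability $\ge0.9$; the lower bound $a_1\gtrsim N^{-2/3}$ needs the tail of $\gamma-\lambda_1$ from \cite{landon2022free}. Second, $K$ must be large enough that the Chebyshev window fits inside the cone, while the constants are still allowed to depend on $K$. I expect this bookkeeping of the edge/rigidity inputs to be the main obstacle; the positivity and continuity argument itself is soft.
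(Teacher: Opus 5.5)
Your proposal is correct and follows essentially the same route as the paper. You identify the contour integral with a constant times the density of $\sum_k a_k^{-1}\xi_k\xi_k^\top$ at its mean $NI_2$, split off the top-$K$ edge part (bounded below by $cN^{-2}$ on an $N^{2/3}$-window via rescaling and compactness), and control the bulk part by Chebyshev on a typical event. The only difference is bookkeeping: you let the rescaled Chebyshev window shrink like $K^{-1/6}$ while the center grows like $K^{1/3}$, whereas the paper fixes the window radius at $N^{2/3}$ and builds $\sum_{k>K}a_k^{-2}\le N^{4/3}/100$ and $\sum_{k\le K}a_k^{-1}\ge 3N^{2/3}$ directly into its good event $\cG_{K,A}$.
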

The proof of Proposition~\ref{prop:numerator-lower-bound} follows the same general strategy as that of Proposition~\ref{ppn:denominator-upper-bound}. Using the contour integral representation, we express $Z_{N,2}^{\sph}$ as an explicit exponential factor times the density of a random vector evaluated at its mean. The main new difficulty is that we now need a lower bound on this density, which requires a more delicate argument.

    By Lemma~\ref{lem:spherical-two-contour}, we can write
    \[
        Z_{N,2}^{\sph}
        =C_{N,2}\exp\left({N}G_{\bLambda}(\gamma)\right)I_{\bLambda,2},
    \]
    where 
    \begin{align*}
        I_{\bLambda,2}=&\ \fr{1}{(2\pi i)^3}
        \int_{(\gamma,\gamma,0)+(i\bbR)^3}
        \exp\left\{
            \fr N2
            (G_{\bLambda,2}(z_1,z_2,w)-2G_{\bLambda}(\gamma))
        \right\}
        \de z_1\,\de z_2\,\de w .
    \end{align*}
    Parameterizing $(z_1,z_2,w) $ by $(\gamma+it_1,\gamma+it_2,is)$, a straightforward computation yields that (recall $a_k=\gamma-\lambda_k,1\le k\le N$)
    \[
           I_{\bLambda,2} = \fr{1}{(2\pi)^3}\int_{\bbR^3}\exp\left(\fr{iN(t_1+t_2)}{2}\right)\prod_{k=1}^N\left(1+\fr{i(t_1+t_2)}{a_k}+\fr{-t_1t_2+s^2}{a_k^2}\right)^{-1/2}\,\de t_1\,\de t_2\,\de s\,.
    \]

Let $(g_k,h_k)_{k=1}^N$ be i.i.d. standard Gaussian vectors in
    $\bbR^2$, independent of $\bW$. 
    Similarly to above, let $\bE$ denote expectation over $(g_k,h_k)_{k=1}^N$, conditional on $\bW$.
    Define
    \[
        T_1
        :=
        \sum_{k=1}^N\fr{g_k^2}{a_k},
        \qquad
        T_2
        :=
        \sum_{k=1}^N\fr{h_k^2}{a_k},
        \qquad
    T_3
        :=
        2\sum_{k=1}^N\fr{g_k h_k}{a_k}
    \]
    and the random vector
    \[
        \Xi_{\bW,2}=
        (T_1,T_2,T_3).
    \]
    Clearly, $\bE[\Xi_{\bW,2}]=(N,N,0)$. Moreover, for each $1\le k\le N$,
    \[
        \bE
        \exp\left\{
            -\fr{i}{2a_k}
            \left(
                t_1g_k^2+t_2h_k^2+2sg_kh_k
            \right)
        \right\}
        =
        \det\left(
            I+
            \fr{i}{a_k}
            \begin{pmatrix}
                t_1&s\\
                s&t_2
            \end{pmatrix}
        \right)^{-1/2},
    \]
    and therefore
    \[
\bE\exp\left\{
            -\fr i2
            \bigl(
                t_1T_1+t_2T_2+sT_3
            \bigr)
        \right\}
        =
        \prod_{k=1}^N
        \left(
            1+\fr{i(t_1+t_2)}{a_k}
            +
            \fr{-t_1t_2+s^2}{a_k^2}
        \right)^{-1/2}.
    \]
    Hence
    \[
        I_{\bLambda,2}
        =
        \fr1{(2\pi)^3}
        \int_{\bbR^3}
        e^{\fr i2(t_1N+t_2N)}
        \bE
        \exp\left\{
            -\fr i2
            \bigl(
                t_1T_1+t_2T_2+sT_3
            \bigr)
        \right\}
        \de t_1\,\de t_2\,\de s .
    \]
    Let $\rho_{\bW,2}$ denote the density of the random vector $\Xi_{\bW,2}$ (conditional on $\bW$).
    By Fourier inversion and the change of variables
    $
        u_1=-t_1/2,
        u_2=-t_2/2,
        u_3=-s/2,
    $
    we obtain
    \[
        I_{\bLambda,2}
        =
        8\,\rho_{\bW,2}(N,N,0)=8\rho_{\bW,2}(\bE\Xi_{\bW,2}).
    \]
It remains to establish a suitable lower bound of $\rho_{\bW,2}(\bE\Xi_{\bW,2})$. The key idea is to decompose
$
\Xi_{\bW,2}=U+V,
$
where $U$ contains the contribution of the top $K$ eigenvalues and $V$
contains the contribution of the remaining bulk eigenvalues. Here $K$ is
a sufficiently large constant to be chosen later. 

The two terms play complementary roles. We will show that $V$ is highly
concentrated on the scale $N^{2/3}$, so that its contribution amounts to
a small random perturbation. On the other hand, we will prove that $U$
admits a density which is uniformly bounded below by $\Omega(N^{-2})$ on a suitable region of
size $N^{2/3}$.

A subtle point is that the density of $U$ is positive only
on a restricted region rather than on all of $\bbR^3$. We therefore
choose $K$ sufficiently large so that with probability at least $0.9$ over $\bW$, the effective range of fluctuations
of $V$ is contained inside this region, where the $\Omega(N^{-2})$ density lower bound for $U$ is available. Combining the concentration of
$V$ with the density lower bound for $U$ then yields the desired lower
bound for the density of $\Xi_{\bW,2}$.

To implement the above strategy, we introduce a good event on $\bW$.
For an integer $K>0$ and a constant $A>0$, consider the event $\cG_{K,A}$ that $\bW$ satisfies
    \[
    A^{-1}N^{-2/3}\le a_1<\cdots<a_K\le AN^{-2/3}\,,\quad \sum_{k=1}^K\fr{1}{a_k}\ge 3N^{2/3}\,,\quad
    \sum_{k=K+1}^N\fr{1}{a_k^2}\le \fr{N^{4/3}}{100}\,.
    \]
We will show that the desired density lower bound holds under $\cG_{K,A}$ for any $K,A$, and there exists $K,A$ such that $\cG_{K,A}$ happens with probability at least $0.9$, as stated in the next two lemmas.

\begin{lem}\label{lem:density-lower-bound}
    For any integer $K\ge 2$ and any $A>0$, there exists $\tilde{c}=\tilde{c}(K,A)>0$ such that for any $\bW\in \cG_{K,A}$, $$\rho_{\bW,2}(\bE\Xi_{\bW,2})\ge \tilde{c}N^{-2}. $$
\end{lem}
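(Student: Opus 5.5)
The plan is to write $\rho_{\bW,2}(\bE\Xi_{\bW,2})$ as an average of the density of the top-$K$ part evaluated at a randomly shifted point. Split $\Xi_{\bW,2}=U+V$, where $U=\sum_{k\le K}(g_k^2,h_k^2,2g_kh_k)/a_k$ and $V$ is the analogous sum over $k>K$. Since $U$ and $V$ are independent,
\[
  \rho_{\bW,2}(m)=\bE\bigl[\rho_U(m-V)\bigr],\qquad m=\bE\Xi_{\bW,2}=(N,N,0).
\]
Write $m-V=\bE U+(\bE V-V)$ with $\bE U=(s,s,0)$ and $s=\sum_{k\le K}1/a_k$. On $\cG_{K,A}$ we have $3N^{2/3}\le s\le KAN^{2/3}$. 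The goal is to show that $\rho_U\ge \tilde c N^{-2}$ on the box
\[
  B=\bigl\{(s,s,0)+N^{2/3}x:\ \|x\|_\infty\le 1\bigr\},
\]
and that $\bE V-V\in N^{2/3}[-1,1]^3$ with probability at least $1/2$.

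\textbf{Concentration of $V$.} Each coordinate of $V$ is a sum of independent terms. Its variance is $2\sum_{k>K}a_k^{-2}$ for the first two coordinates and $4\sum_{k>K}a_k^{-2}$ for the third. On $\cG_{K,A}$ both are at most $N^{4/3}/25$. By Chebyshev and a union bound over the three coordinates, $\bbP\bigl(\|V-\bE V\|_\infty\le N^{2/3}\bigr)\ge 1-3/25\ge 1/2$. Hence $\rho_{\bW,2}(m)\ge \tfrac12\inf_B\rho_U$.

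\textbf{Density lower bound for $U$.} Identify $U$ with the $2\times 2$ matrix $\bM_U=\sum_{k\le K}b_k v_kv_k^\top$, where $v_k=(g_k,h_k)$. Then $T_1,T_2$ are its diagonal entries and $T_3$ is twice its off-diagonal entry; the latter is a constant Jacobian factor. After rescaling by $N^{-2/3}$ we have $b_k=(N^{2/3}a_k)^{-1}\in[A^{-1},A]$, and the three-dimensional density picks up exactly the factor $N^{-2}$. The rescaled target set $B'$ consists of matrices with diagonal entries in $[s',s'+1]\cup[s'-1,s']$, where $s'=sN^{-2/3}\in[3,KA]$, and off-diagonal entry at most $1/2$ in absolute value. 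Thus $B'$ lies in a compact subset of the positive definite cone with all eigenvalues in $[1,KA+2]$. It therefore suffices to show that the density of $\sum_k b_kv_kv_k^\top$ is bounded below on such compacts, uniformly over $b\in[A^{-1},A]^K$.

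\textbf{Uniform positivity.} Condition on $v_3,\dots,v_K$. With positive probability $p(K,A)$ we have $P:=\sum_{k\ge3}b_kv_kv_k^\top\preceq \tfrac12 I$, and on that event $M-P$ still lies in a fixed compact subset of the positive definite cone for every $M\in B'$. For the two-term part, write $b_1v_1v_1^\top+b_2v_2v_2^\top=\bY^\top\bY$, where $\bY$ has independent Gaussian rows with variances $b_1,b_2\in[A^{-1},A]$. The Gaussian density of $\bY$ dominates $c(A)$ times the isotropic Gaussian density with variance $A$. So the law of $\bY^\top\bY$ dominates $c(A)$ times the $2\times2$ Wishart law with two degrees of freedom and scale $A$. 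That law has density proportional to $\det(\cdot)^{-1/2}e^{-\Tr(\cdot)/(2A)}$, which is continuous and positive on the cone, hence bounded below on compacts. Averaging over $P$ gives $\inf_{B'}\rho_{N^{-2/3}U}\ge c(K,A)>0$, and therefore $\inf_B\rho_U\ge c(K,A)N^{-2}$.

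The main obstacle is this last step: obtaining a density lower bound for $U$ that is uniform in the random weights and stays away from the boundary of the cone. The Wishart domination together with the $P\preceq\tfrac12 I$ conditioning is the approach to try first.
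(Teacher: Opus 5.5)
Your proof is correct in outline and follows the paper on the outer argument: the split $\Xi_{\bW,2}=U+V$, Chebyshev for $V$ at scale $N^{2/3}$, and rescaling by $N^{-2/3}$ to extract $N^{-2}$. Using an $\ell^\infty$ box instead of the paper's Euclidean ball changes nothing.

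There is one slip: the Gaussian domination goes the wrong way. The rows of $Y$ are $N(0,b_kI_2)$ with $b_k\le A$. The ratio of that density to the $N(0,AI_2)$ density is $(A/b_k)\exp\bigl(-|y|^2(b_k^{-1}-A^{-1})/2\bigr)$, which tends to $0$ as $|y|\to\infty$ whenever $b_k<A$. Compare instead with isotropic variance $A^{-1}\le b_k$. Then the ratio is at least $A^{-1}/b_k\ge A^{-2}$ everywhere. So the law of $b_1v_1v_1^\top+b_2v_2v_2^\top$ dominates $A^{-4}$ times $W_2(2,A^{-1}I_2)$, whose density is proportional to $\det(\cdot)^{-1/2}e^{-A\Tr(\cdot)/2}$. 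Alternatively, a domination valid only on bounded sets of $Y$ suffices, since the fiber over a compact set of matrices has bounded Frobenius norm.

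With that fix your argument goes through. The event $P\preceq\tfrac12 I$ has probability bounded below uniformly in $b$; for example, require $|v_k|^2\le 1/(2KA)$ for all $k\ge 3$. On that event, $M-P\in\{\tfrac12 I\preceq X\preceq (KA+2)I\}$ for every $M\in B'$.

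Where you genuinely differ from the paper is the uniform positivity of the rescaled density of $U$. The paper uses a submersion/coarea argument to show that the density $\tilde\rho_b$ is positive on the whole region $\Omega_b\supset\oB(0,1)$. It then asserts, without proof, that $b\mapsto\min_{\oB(0,1)}\tilde\rho_b$ is continuous, and minimizes over the compact parameter set $\overline\Omega_{K,A}$.

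Your route replaces both the coarea verification and that continuity-in-$b$ step with an explicit comparison. You split off two rank-one terms, condition the remaining ones to be small, and dominate by a fixed $n=p=2$ Wishart law with a closed-form density. This gives an explicit $c(K,A)$ and is more elementary. The paper's argument yields the stronger qualitative fact that $\tilde\rho_b>0$ on all of $\Omega_b$, but that is not needed here.
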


\begin{lem}\label{lem:typical-event}
    There exists integer $K\ge 2$ and $A>0$ such that $\bbP_{\bW\sim\GOE(N)}(\bW\in \cG_{K,A})\ge 0.9$.
\end{lem}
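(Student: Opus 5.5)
The plan is to verify each of the four defining conditions of $\cG_{K,A}$ separately with probability at least $0.99$, choosing $K$ first and then $A$, and then take a union bound. The only random-matrix inputs are those already used above, together with standard edge results for the GOE:
\begin{itemize}
\item[(i)] Tightness of $N^{2/3}(\gamma-\lambda_1)$, from \cite[Proposition~3.1]{landon2022free}. I expect this reference (or its proof) to also give a matching lower bound: $\bbP(\gamma-\lambda_1\le \delta N^{-2/3})\to 0$ as $\delta\to 0$, uniformly in $N$. This holds because in the limit $\gamma-\lambda_1$ is a positive, a.s.\ finite functional of the Airy$_1$ point process.
\item[(ii)] Convergence of $N^{2/3}(\lambda_k-2)_{k\le K}$ to the top $K$ points $\alpha_1>\alpha_2>\cdots$ of the Airy$_1$ process \cite[Theorem~4.5.42]{anderson2010introduction}. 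In particular the $\lambda_k$ are a.s.\ distinct, so $a_1<\cdots<a_K$ a.s., and the gaps are tight.
\item[(iii)] Eigenvalue rigidity near the edge. With high probability, $|\lambda_k-\gamma_k|\le N^{-2/3+\eps}k^{-1/3}$ for all $k$, where $\gamma_k\approx 2-c_{\mathrm{sc}}(k/N)^{2/3}$ are the semicircle quantiles. In addition, an edge counting bound holds: for every $s\ge s_0$, $\#\{k:\lambda_k\ge 2-sN^{-2/3}\}\le Cs^{3/2}$ with probability tending to $1$ as $s_0\to\infty$, uniformly in $N$.
\end{itemize}
Inputs (i) and (iii) also show that $N^{2/3}|\gamma-2|$ is tight.

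\textbf{Step 1: the sum $\sum_{k\le K} 1/a_k\ge 3N^{2/3}$.} By (i) and (ii), $N^{2/3}a_k=N^{2/3}(\gamma-2)-N^{2/3}(\lambda_k-2)$. With probability $\ge 0.995$ the quantity $|N^{2/3}(\gamma-2)|$ is at most some constant $B$. By the Airy asymptotics $\alpha_k\approx-(3\pi k/2)^{2/3}$, transferred to finite $N$ via (ii) for fixed $k$ and (iii) for larger $k$, we have $N^{2/3}a_k\le B+Ck^{2/3}$ for all $k\le K$ with high probability. Hence
\[
N^{-2/3}\sum_{k\le K}\fr{1}{a_k}\;\ge\;\sum_{k\le K}\fr{1}{B+Ck^{2/3}}\;\gtrsim\; K^{1/3}\,,
\]
which exceeds $3$ once $K$ is large enough. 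This step fixes $K$.

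\textbf{Step 2: the window $A^{-1}N^{-2/3}\le a_1<\cdots<a_K\le AN^{-2/3}$.} The lower bound follows from $a_1=\gamma-\lambda_1$ together with the lower-tail part of (i). The upper bound follows since $a_K\le a_1+(\lambda_1-\lambda_K)$, and both terms are $O(N^{-2/3})$ with high probability by (i) and (ii) for the now fixed $K$. Strict monotonicity holds a.s. Choosing $A=A(K)$ large gives probability $\ge 0.99$.

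\textbf{Step 3: the bulk sum $\sum_{k>K} a_k^{-2}\le N^{4/3}/100$, the main obstacle.} Since $\gamma>\lambda_1$, we have $a_k\ge \lambda_1-\lambda_k$. I split the sum into three ranges.
\begin{itemize}
\item For $k\ge N^{\delta}$ with $\delta>3\eps$, rigidity (iii) gives $\lambda_1-\lambda_k\gtrsim (k/N)^{2/3}$ deterministically on the rigidity event. The contribution is then $\lesssim N^{4/3}\sum_{k\ge N^\delta}k^{-4/3}\lesssim N^{4/3-\delta/3}$, which is negligible.
\item For $K<k<N^\delta$, the counting bound (iii), applied at dyadic scales $s=2^j$, shows that with probability close to $1$ we have $2-\lambda_k\ge c\,k^{2/3}N^{-2/3}$ for all such $k$. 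This uses that $K$ is large, so the first eigenvalues are already past scale $s_0$. Combined with $\lambda_1\ge 2-BN^{-2/3}$ and $k^{2/3}\gg B$ for $k>K$, this yields $a_k\ge c'k^{2/3}N^{-2/3}$.
\item Summing over $k>K$ gives $\sum_{k>K}a_k^{-2}\lesssim N^{4/3}K^{-1/3}$, which is at most $N^{4/3}/100$ after possibly enlarging $K$.
\end{itemize}
Enlarging $K$ only helps Step 1, so the choice of $K$ is consistent; $A$ is then re-chosen as in Step 2.

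The delicate point is this intermediate range $K<k<N^\delta$. Here one needs a lower bound on the gaps $\lambda_1-\lambda_k$ that is uniform in $k$ at the Airy scale, with constants independent of $N$. I would first try to obtain it from the optimal edge rigidity and counting estimates for the GOE, for instance via tail bounds on the number of eigenvalues near the edge (Ledoux--Rider type). If these are not directly citable, the fallback is to bound $\bbE\sum_{K<k<N^\delta}(2+N^{-2/3}-\lambda_k)^{-2}\mathbf 1\{\lambda_k<2-\cdots\}$ using the one-point density of states near the edge, and then apply Markov's inequality.
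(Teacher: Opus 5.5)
Your proposal is correct and follows essentially the paper's route: two-sided edge tightness of $\lambda_1$ and $\gamma-\lambda_1$ for the top window, the bound $a_k\ge\lambda_1-\lambda_k$ for $k>K$, an edge counting estimate for $K\le k\le N^{0.01}$, and Erd\H{o}s--Yau--Yin rigidity beyond that. The counting input you flag as delicate is exactly what the paper takes from \cite[Proposition~6.5]{landon2022fluctuations}: the number of eigenvalues above $2-sN^{-2/3}$ has mean $\fr{2}{3\pi}s^{3/2}+O(1)$ and variance $O(\log s)$, uniformly for $s\in[C_0,N^{4/15}]$. By Chebyshev this gives $\bbP(2-\lambda_k<k^{2/3}N^{-2/3})\lesssim \log k/k^2$, which is summable over $k\ge K$. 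The paper uses the same estimate to get $2-\lambda_K\le 10K^{2/3}N^{-2/3}$, so your Step~1 can be replaced by $\sum_{k\le K}a_k^{-1}\ge K/a_K$, with no appeal to Airy$_1$ point asymptotics.
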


With the above two lemmas we may finish the proof of Proposition~\ref{prop:numerator-lower-bound}, and thus conclude Proposition~\ref{ppn:positive-probability-lower-bound} and hence Proposition~\ref{p:J-of-zero}. 

\begin{proof}[Proof of Proposition~\ref{prop:numerator-lower-bound}]
By Lemmas~\ref{lem:density-lower-bound} and \ref{lem:typical-event}, there is a universal constant $\tilde{c}>0$, and an event with probability at least $0.9$ under which $\rho_{\bW,2}(\bE\Xi_{\bW,2})\ge \tilde{c}N^{-2}$. Combining with the asymptotics of $C_{N,2}$ from Lemma~\ref{lem:spherical-two-contour}, we see that under the same event,
\[
Z_{N,2}^\sph=C_{N,2}\exp(NG_\bLambda(\gamma))\cdot 8\rho_{\bW,2}(\bE\Xi_{\bW,2})\ge cN^{-1/2}e^{-N}\exp(NG_\bLambda(\gamma))\,,
\]
where $c=2^{7/2}\pi^{3/2}\tilde{c}>0$. The desired result follows.
\end{proof}

We now provide the proof of Lemmas~\ref{lem:density-lower-bound} and \ref{lem:typical-event}. 

\begin{proof}[Proof of Lemma~\ref{lem:density-lower-bound}]
Fix $K,A$, and assume $\bW\in \cG_{K,A}$. Write
\[
\Xi_{\bW,2}=U+V=:\left(\sum_{k=1}^K\fr{g_k^2}{a_k},\ \sum_{k=1}^K\fr{h_k^2}{a_k},\ \sum_{k=1}^K\fr{2g_kh_k}{a_k}\right)+\left(\sum_{k=K+1}^N\fr{g_k^2}{a_k},\ \sum_{k=K+1}^N\fr{h_k^2}{a_k},\ \sum_{k=K+1}^N\fr{2g_kh_k}{a_k}\right)\,,
\]
and denote $\rho_{U},\rho_{V}$ as the density of $U,V$. Since $U,V$ are independent, we have $\rho_{\bW,2}=\rho_{U}\ast \rho_{V}$.

We first control $\rho_{V}$. Write $V:=(T_1^{>K},T_2^{>K},T_3^{>K})$. Note that under $\cG_{K,A}$,
\[
\Var(T_1^{>K})=\Var(T_2^{>K})=\sum_{k=K+1}^N\fr{2}{a_k^2}\le \fr{N^{4/3}}{50}\,,\quad \Var(T_3^{>K})=\sum_{k=K+1}^N\fr{4}{a_k^2}\le \fr{N^{4/3}}{25}\,.
\]
Applying Chebyshev's inequality, we see that 
\[
\bbP\left(\|V-\bE V\|\ge N^{2/3}\right)\le \sum_{j=1}^3\bbP\left((T_j^{>K}-\bE T_j^{>K})^2\ge \fr{N^{4/3}}{3}\right)\le \sum_{j=1}^3\fr{9\Var(T_j^{>K})}{N^{4/3}}\le \fr{18}{25}\,,
\]
and thus,
\[
\bbP\left(V\in \oB(\bE V,N^{2/3})\right)=\int_{\oB(\bE V,N^{2/3})}\rho_{V}(x)\,\de x\ge \fr{7}{25}\,,
\]
where $\oB(x_0,r)\subset \bbR^3$ is the closed ball centered at $x_0$ with radius $r$.

On the other hand, we claim that there exists $c_1=c_1(K,A)>0$, such that under $\cG_{K,A}$,
\[
\rho_{U}(x)\ge c_1N^{-2}\,,\quad\forall x\in \oB(\bE U,N^{2/3})\,.
\]
The proof proceeds via a compactness argument. After rescaling, the law of $U$ can be viewed as a member of a compact family of distributions indexed by the finite-dimensional parameter vector $(b_1,\dots,b_K)$. We show that each such distribution admits a density that is strictly positive near the origin. A standard compactness argument then yields a uniform positive lower bound over the entire family.

Precisely, note that for any vector $b=(b_1,\dots,b_K)\in \bbR_+^K$, the random vector
\[
\left(\sum_{k=1}^Kb_k(g_k^2-1),\sum_{k=1}^Kb_k(h_k^2-1),\sum_{k=1}^K 2b_kg_kh_k\right)
\]
has a density $\tilde{\rho}_{b}$ that is positive in the region
\[
\Omega_{b}=\{(x,y,z)\in \bbR^3: x+m > 0, y+m > 0, \left(x+m\right)\left(y+m\right)>z^2/4\}\,,
\]
where $m=m(b)=\sum_{k=1}^K{b_k}$.
Indeed, the random vector can be written as
\[
\left(
\sum_{k=1}^K b_k g_k^2,\,
\sum_{k=1}^K b_k h_k^2,\,
\sum_{k=1}^K 2b_k g_k h_k
\right)
-(m,m,0),
\]
and the first term is precisely the collection of entries of the random positive semidefinite matrix
\[
\sum_{k=1}^K b_k
\begin{pmatrix}
g_k\\ h_k
\end{pmatrix}
\begin{pmatrix}
g_k&\!h_k
\end{pmatrix}.
\]
Every point of $\Omega_{b}$ corresponds to a positive definite matrix. Such a matrix admits a preimage $(g_k,h_k)$ of the above map, with $(g_k),(h_k)$ linearly independent (here we use $K\ge 2$). Moreover, one checks that the map is a submersion at the preimage. Since $(g_k,h_k)_{k=1}^K$ has a smooth density which is strictly positive on $\bbR^{2K}$, by the coarea formula, the pushforward measure admits a density that is strictly positive on $\Omega_{b}$.

Next, we note that as long as $m(b)\ge 3$, $\Omega_{b}\supset \oB(0,1)$. This implies that whenever $m(b)\ge 3$, we have $\tilde{\rho}_{b}(y)>0$ for any $y\in \oB(0,1)$.
Additionally, it is easy to check that the function
\[
b\mapsto \min_{y\in \oB(0,1)}\tilde{\rho}_{b}(y)
\]
is continuous with respect to $b\in \bbR_+^K$, hence its minimum over the compact set
\[
\overline{\Omega}_{K,A}=:\{b\in \bbR_+^K:b_i\in [A^{-1},A],1\le i\le K,m(b)\ge 3\}
\]
is strictly positive, which we denote as $c_1$. Note that $c_1$ only depends on $K,A$.

Finally, for $\bW\in \cG_{K,A}$, let $b_k=(N^{2/3}a_k)^{-1}$. Then $b=(b_1,\dots,b_K)\in \overline{\Omega}_{K,A}$. For any $x\in \oB(\bE U,N^{2/3})$, by rescaling of $N^{-2/3}$, it is easy to see that
\[
\rho_{U}(x)=N^{-2}\tilde{\rho}_{b}(N^{-2/3}(x-\bE U))\,.
\]
Since $y:=N^{-2/3}(x-\bE U)\in \oB(0,1)$, the above is lower bounded by
\[
N^{-2}\inf_{b\in \overline{\Omega}_{K,A}}\min_{y\in \oB(0,1)}\tilde{\rho}_{b}(y)= c_1N^{-2}\,,
\]
and the claim follows.

Combining the inputs together, we obtain
\begin{align*}
    \rho_{\bW,2}(\bE\Xi_{\bW,2})
    =(\rho_{U}\ast \rho_{V})(\bE U+\bE V)
    \ge \int_{\oB(0,N^{2/3})}\rho_{U}(\bE U+x)\rho_{V}(\bE V-x)\,\de x
    \ge \fr{7c_1}{25}N^{-2}\,,
\end{align*}
and the result follows by taking $\tilde{c}=7c_1/25$. 
\end{proof}

The proof of Lemma~\ref{lem:typical-event} is essentially a combination of standard random matrix theory inputs. 

\begin{proof}[Proof of Lemma~\ref{lem:typical-event}]
We use the following standard inputs. First, by the edge tightness of
$\lambda_1$ \cite[Theorem~4.5.42]{anderson2010introduction} and \cite[Proposition~3.1]{landon2022free}, for $C$ sufficiently large,
\[
    \bbP(\cG_1\cap \cG_2)\ge 0.99,
\]
where
\[
    \cG_1
    =
    \{2-CN^{-2/3}\le \lambda_1\le 2+CN^{-2/3}\},
\]
and
\[
    \cG_2
    =
    \{C^{-1}N^{-2/3}\le \gamma-\lambda_1\le CN^{-2/3}\}.
\]

Second, we use the GOE edge counting estimate (see \cite[Proposition 6.5]{landon2022fluctuations}):
there is a universal
constant $C_0>0$ such that, uniformly for $s\in [C_0,N^{4/15}]$,
\[
    \left|
    \bbE \#\{k:\lambda_k\ge 2-sN^{-2/3}\}
    -
    \fr{2}{3\pi}s^{3/2}
    \right|
    \le C_0,
\]
and
\[
    \Var
    \#\{k:\lambda_k\ge 2-sN^{-2/3}\}
    \le C_0\log s .
\]

Now we fix an integer $K$ such that $$K^{2/3}\ge \max(C_0,2C), \quad K\ge 10C_0, \quad \sum_{k\ge K}\fr{\log k}{k^2}\le \fr{1}{500C_0},\quad \fr{K}{2C+10K^{2/3}}\ge 3\,,\quad \sum_{k>K}k^{-4/3}\le \fr{1}{800}\,.$$
We first control $\lambda_K$. Define
\[
    \cG_3
    =
    \left\{
    K^{2/3}N^{-2/3}
    \le 2-\lambda_K
    \le
    10K^{2/3}N^{-2/3}
    \right\}.
\]
Indeed, since $K^{2/3}\ge C_0$, we may apply the counting estimate and obtain that
\[
    \bbE\#\{k:\lambda_k\ge 2-K^{2/3}N^{-2/3}\}
    \le
    \fr{2}{3\pi}K+C_0<\fr{1}{2}K\,,
\]
while
\[
    \bbE\#\{k:\lambda_k\ge 2-10K^{2/3}N^{-2/3}\}
    \ge
    \fr{2}{3\pi}10^{3/2}K-C_0>2K,
\]
where the strict inequalities follow from $K>10C_0$. By Chebyshev's inequality and the variance bound,
\[
    \bbP(\cG_3^c)
    \le \bbP\big(\#\{k:\lambda_k\ge 2-K^{2/3}N^{-2/3}\}\ge K\big)+\bbP\big(\#\{k:\lambda_k\ge 2-10K^{2/3}N^{-2/3}\}\le K\}\big)\le
    \fr{5C_0\log K}{K^2}\le 0.01.
\]

Next define the rigidity event
\[
    \cG_4
    =
    \left\{
    2-\lambda_k\ge N^{-2/3}k^{2/3}
    \text{ for all } K\le k\le N^{99/100}
    \right\}.
\]
For $K\le k\le N^{0.01}$, similarly as above, the counting estimate and Chebyshev's inequality imply
\[
    \bbP\left(2-\lambda_k<N^{-2/3}k^{2/3}\right)
    \le \fr{5C_0\log k}{k^2},
\]
and hence
\[
    \sum_{K\le k\le N^{0.01}}
    \bbP\left(2-\lambda_k<N^{-2/3}k^{2/3}\right)
    \le
    \sum_{k\ge K}\fr{5C_0\log k}{k^2}\le 0.01,
\]
where the last inequality follows by our choice of $K$.
Let $\mu_k$ denote the classical location of the $k$-th eigenvalue of $\bW$, i.e. $\mu_k \in [-2,2]$ satisfies
\[
  \int_{\mu_k}^2 \fr{\sqrt{4-x^2}}{2\pi} \,\de x = \fr{k}{N}\,.
\]
For $N^{0.01}\le k\le N^{99/100}$, the usual rigidity estimate from \cite[Theorem 2.2]{ErdosYauYin2012Rigidity} implies that with probability at least $0.99$ for large $N$, 
\[
2-\lambda_k=(1+o(1))(2-\mu_k)=(1+o(1))\left(\fr{3\pi}{2}\right)^{2/3}N^{-2/3}k^{2/3}\,,\quad \text{for all }N^{0.01}\le k\le N^{99/100}\,,
\]
and thus $\bbP[\cG_4]\ge 0.98$.

Consequently, for our choice of $C$ and $K$, it holds for large $N$ that
\[
    \bbP(\cG_1\cap\cG_2\cap\cG_3\cap\cG_4)
    \ge
    1-0.01-0.01-0.02
   >0.9.
\]
We now verify the three defining properties of $\cG_{K,A}$ on this
event. This will conclude the lemma. 

First, by $\cG_2$,
\[
    a_1=\gamma-\lambda_1\ge C^{-1}N^{-2/3}.
\]
Also,
\[
    a_K
    =
    \gamma-\lambda_K
    =
    (\gamma-\lambda_1)+(\lambda_1-2)+(2-\lambda_K).
\]
Using $\cG_1,\cG_2,\cG_3$, we get
\[
    a_K
    \le
    (2C+10K^{2/3})N^{-2/3}\,.
\]
Therefore, taking $A=2C+10K^{2/3}$, it holds that
\[
    A^{-1}N^{-2/3}
    \le
    a_1<\cdots<a_K
    \le
    AN^{-2/3}.
\]
Second, by monotonicity we have
\[
    \sum_{k=1}^K\fr1{a_k}
    \ge
    \fr{K}{a_K}
    \ge
    \fr{K}{(2C+10K^{2/3})N^{-2/3}}\ge 3N^{2/3},
\]
where the last inequality follows by our choice of $K$. 

Finally, for every $k\ge K$, since $\gamma>\lambda_1$,
\[
    a_k=\gamma-\lambda_k\ge \lambda_1-\lambda_k.
\]
On $\cG_1\cap\cG_4$, for every $K\le k\le N^{99/100}$,
\[
    \lambda_1-\lambda_k
    =
    (2-\lambda_k)-(2-\lambda_1)
    \ge
    N^{-2/3}k^{2/3}-CN^{-2/3}\ge \fr{1}{2}N^{-2/3}k^{2/3},
\]
where the last inequality follows as $k^{2/3}\ge K^{2/3}\ge 2C$. 
Moreover, for $k>N^{99/100}$ we have $2-\lambda_k>\fr{1}{2}N^{-2/3}N^{33/50}>40N^{-1/6}$ for large $N$.
On the event $\cG_1$, this implies
\[
	a_k \ge \lambda_1 - \lambda_k
	= (2 - \lambda_k) - (2 - \lambda_1)
	\ge 40N^{-1/6} - CN^{-2/3}
	\ge 20N^{-1/6}\,.
\]
Hence, 
\[
    \sum_{k=K+1}^N\fr1{a_k^2}
    \le
    4N^{4/3}\sum_{k=K+1}^{N^{99/100}} k^{-4/3}+N\cdot \fr{N^{1/3}}{400}
    \le
   \fr{N^{4/3}}{100}\,,
\]
where the last inequality follows from our choice of $K$. This completes the proof.
\end{proof}

\begin{proof}[Proofs of Propositions~\ref{p:J-of-zero} and \ref{ppn:positive-probability-lower-bound}]
  Proposition~\ref{ppn:positive-probability-lower-bound} follows by considering the event in the intersection of Propositions~\ref{ppn:denominator-upper-bound}--\ref{prop:numerator-lower-bound}.
  Proposition~\ref{p:J-of-zero} then follows from \eqref{e:J-of-zero-rotational-invariance}, Proposition~\ref{ppn:positive-probability-lower-bound}, and positivity of $Z^\sph_N$, $Z^\sph_{N,2}$.
\end{proof}

\section{Proof of main variance estimate}
\label{s:variance}

In this section, we complete the proof of Theorem~\ref{t:main}.
This section is structured as follows.
\begin{itemize}
  \item In \S\ref{ss:overlap-ub-24mt} we prove Proposition~\ref{p:overlap-ub-24mt}, which provides asymptotically sharp upper bounds on $\bbE \la R_{1,2}^2 \ra$ and $\bbE \la R_{1,2}^4 \ra$.
  This is the part of the proof of Theorem~\ref{t:main} that is new to this paper; Theorem~\ref{t:main} follows by combining it with known results from \cite{dey2026fluctuations} and \cite{chatterjee2009disorder}.
  \item In \S\ref{ss:cavity} we describe the input from \cite{dey2026fluctuations}.
  The main result of this subsection is Proposition~\ref{p:cavity}, which gives a two-sided bound on a correlated overlap moment $\bbE \la R_{1,2}^2 \ra_t$, where the replicas $\bx^1,\bx^2$ are Gibbs samples of two $t$-correlated SK Hamiltonians, in terms of $\bbE \la R_{1,2}^2 \ra$ and $\bbE \la R_{1,2}^4 \ra$.

  \item In \S\ref{ss:variance} we complete the proof of Theorem~\ref{t:main} using an integral formula for $\Var(F_N)$ in terms of the $\bbE \la R_{1,2}^2 \ra_t$ due to \cite{chatterjee2009disorder}.
  \item In \S\ref{ss:variance-consequences} we obtain Theorem~\ref{t:overlap}\ref{i:overlap-lb} and Corollary~\ref{c:sphere}\ref{i:sphere-overlap-lb} as simple consequences of this proof.
\end{itemize}

\subsection{Annealed overlap upper bound}
\label{ss:overlap-ub-24mt}

The main result of this subsection is the following proposition, which controls the second and fourth annealed overlap moments.
Recall $\eps = 0.01$. 
\begin{ppn}\label{p:overlap-ub-24mt}
  We have
  \balnn
    \bbE \la R_{1,2}^2 \ra &\lesssim \max\lt((NJ(0)^2)^{-1}, N^{-2/3-\eps}\rt)\,, & 
    \bbE \la R_{1,2}^4 \ra &\lesssim N^{-4/3}\,.
  \ealnn
  Together with the bound $J(0) \gtrsim N^{-1/6}$ from Proposition~\ref{p:J-of-zero}, the first estimate implies $\bbE \la R_{1,2}^2 \ra \lesssim N^{-2/3}$.
\end{ppn}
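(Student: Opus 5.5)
We will deduce both bounds from Proposition~\ref{p:reweight-annealed-overlap-ub}, which controls overlaps under the reweighting $X_N^2$, together with lower-tail control of $X_N$. Set $a = N^{1/2}J(0)$. Proposition~\ref{p:reweight-annealed-overlap-ub} gives $\bbE[X_N^2\la e^{ca|R_{1,2}|}\ra]\le 2$. Hence for every fixed $k$,
\[
  \bbE[X_N^2\la R_{1,2}^{2k}\ra]\lesssim_k a^{-2k}.
\]
For $k=1$ this is $(NJ(0)^2)^{-1}$. For $k=2$ it is $\lesssim N^{-4/3}$, using $J(0)\gtrsim N^{-1/6}$ from Proposition~\ref{p:J-of-zero}.

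To remove the weight, fix a threshold $\delta>0$ and split on the event $\{X_N\ge\delta\}$:
\[
  \bbE\la R_{1,2}^{2k}\ra \le \delta^{-2}\,\bbE[X_N^2\la R_{1,2}^{2k}\ra] + \bbE\bigl[\la R_{1,2}^{2k}\ra\,\bone\{X_N<\delta\}\bigr].
\]
We bound the second term crudely by $\bbP(X_N<\delta)$, or by Cauchy--Schwarz, $\bbP(X_N<\delta)^{1/2}(\bbE\la R_{1,2}^{4k}\ra)^{1/2}$, using a priori overlap bounds. For example, Talagrand's estimate $\bbE\la R_{1,2}^2\ra\lesssim N^{-1/2}$ bootstraps to higher moments.

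The key step is a lower-tail bound for $X_N = Z_N/Z^\sph_N$, i.e.\ for $\log Z_N - \log Z^\sph_N$. Chen's Gaussian concentration inequality \cite{chen2023gaussian} shows that $F_N$ concentrates, with lower tail controlled by overlap moments, at a scale well below $\sqrt{\log N}$ in an appropriate sense. Combining this with known control of $F^\sph_N$ and with $\bbE X_N=1$, $\bbE X_N^2 = 1+O(N^{-1/3})$ from Theorem~\ref{t:compare-cube-sphere}, we aim to show
\[
  \bbP(X_N<\delta)\le N^{-A}\quad\text{for } \delta = N^{-\eps/2},
\]
with $A$ large (say $A\ge 2$).

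Granting this, the first term contributes $N^{\eps}(NJ(0)^2)^{-1}$. A loss of $N^{\eps}$ is unacceptable, however, so we instead take $\delta$ to be a small constant. The bound $\bbE[(X_N-1)^2]\lesssim N^{-1/3}$ gives $\bbP(X_N<1/2)\lesssim N^{-1/3}$ by Chebyshev. Then
\[
  \bbE\la R_{1,2}^2\ra\lesssim (NJ(0)^2)^{-1} + \bbE\bigl[\la R_{1,2}^2\ra\bone\{X_N<1/2\}\bigr].
\]
For the residual term, Hölder with an a priori $\bbE\la R_{1,2}^{2p}\ra\lesssim N^{-p/2}$ (Talagrand) gives roughly $N^{-1/2}\cdot N^{-1/3(1-1/p)}$, which is about $N^{-5/6+}$, better than $N^{-2/3-\eps}$. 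This explains the $\max$ in the statement.

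For the fourth moment the same residual gives about $N^{-1}\cdot N^{-1/3+}$, which is about $N^{-4/3+}$ and marginally insufficient. Here we would sharpen the probability bound $\bbP(X_N<1/2)$ via higher moments: apply Proposition~\ref{p:reweight-annealed-overlap-ub} and Chen's concentration to get a stretched-exponential lower tail for $\log X_N$. Alternatively, we would bootstrap, first getting $\bbE\la R_{1,2}^4\ra\lesssim N^{-4/3+}$ and then iterating with improved a priori moments. I expect this lower-tail control of $X_N$, sharp enough to avoid polynomial losses, to be the main obstacle.
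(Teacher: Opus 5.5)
There is a genuine gap, at exactly the step you flag: the bound $\bbE\la R_{1,2}^4\ra\lesssim N^{-4/3}$ is never established, and neither remedy you propose can close it.

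Chen's inequality (Lemma~\ref{l:convex-gaussian-lower-tail}) controls only the lower tail of $F_N$ itself, at scale $\sqrt{\Var(F_N)}$. That scale is a priori $O(\log N)$ by Theorem~\ref{t:CL19}, and in truth of order $\sqrt{\log N}$, not ``well below'' it. So it gives useful bounds on $\bbP(X_N<\delta)$ only when $\log(1/\delta)\gg\log N$. It says nothing about the $O(1)$ deviation of $F_N-F^\sph_N$ encoded by $\{X_N<1/2\}$, where all you have is Chebyshev's $\bbP(X_N<1/2)\lesssim N^{-1/3}$.

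The bootstrap fails too. Each H\"older step against this probability costs a factor $N^{1/(3p)}$. Feeding improved higher moments back in shrinks this deficit but never removes it, so you reach only $N^{-4/3+\delta}$ for each fixed $\delta>0$.

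Separately, your $k=1$ argument rests on an a priori bound $\bbE\la R_{1,2}^{2p}\ra\lesssim N^{-p/2}$ for some $p\ge3$. You say this ``bootstraps'' from Talagrand's second-moment estimate but give no argument. The second moment alone bounds the residual $\bbE[\la R_{1,2}^2\ra\bone\{X_N<1/2\}]$ only by $N^{-1/2}$.

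The missing idea is to truncate the overlap as well as $X_N$, so that the lower tail of $X_N$ is needed only at a stretched-exponentially small level. Take $c$ from Corollary~\ref{c:reweight-annealed-overlap-ub}, set $\tau=N^{-(2k-1)/(6k)-\eps}$ and $\eta=\exp(-\fr c3N^{1/(6k)-\eps})$, and split $\bbE\la R_{1,2}^{2k}\ra$ into four pieces.

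On $\{X_N\ge1/2\}$, the bound $\bone\{X_N\ge1/2\}\le4X_N^2$ gives a contribution $\lesssim(N^{1/2}J(0))^{-2k}$.

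On $\{X_N<1/2\}$, the part with $|R_{1,2}|\le\tau$ is at most $\tau^{2k}\bbP(X_N<1/2)\lesssim N^{-2k(1/3+\eps)}$.

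On $\{\eta\le X_N<1/2\}$, the part with $|R_{1,2}|>\tau$ is at most $\eta^{-2}\bbE[X_N^2\la\bone\{|R_{1,2}|>\tau\}\ra]\le2\eta^{-2}e^{-cN^{1/3}\tau}=2\eta$. It is the \emph{exponential} reweighted tail from Proposition~\ref{p:reweight-annealed-overlap-ub} that lets you afford the factor $\eta^{-2}$; fixed-order moment bounds would not.

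Finally, with $m=\fr N4-\fr{\log N}{12}$, we have $\bbP(X_N<\eta)\le\bbP(F_N<m-\fr12\log\fr1\eta)+\bbP(Z^\sph_N>e^m\eta^{-1/2})$. Since $\log\fr1\eta$ is polynomial in $N$, the first term is stretched-exponentially small by Chen's inequality with the Chen--Lam variance bound (Lemma~\ref{l:fe-lower-tail-crude}). The second is at most $N^{1/12}\eta^{1/2}$ by Markov.

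This argument needs no a priori overlap bounds. The $\max$ in the statement comes from the $N^{-2k(1/3+\eps)}$ residual, not from a Talagrand-type $N^{-5/6}$.
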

\begin{rmk}\label{r:J-of-zero-explicit-dependence}
  The estimate on $\bbE \la R_{1,2}^2 \ra$ above keeps the dependence on $J(0)$ explicit so that, after we prove $\bbE \la R_{1,2}^2 \ra\gtrsim N^{-2/3}$ (Theorem~\ref{t:overlap}\ref{i:overlap-lb}), we can infer the matching upper bound $J(0) \lesssim N^{-1/6}$.
  This is useful for the proof of Corollary~\ref{c:sphere}\ref{i:sphere-overlap-lb} in \S\ref{ss:variance-consequences}.
\end{rmk}

\begin{cor}\label{c:reweight-annealed-overlap-ub}
  Recall $X_N = Z_N / Z^\sph_N$.
  For $k=1,2$ we have
  \[
    \bbE [X_N^2 \la R_{1,2}^{2k} \ra] \lesssim (N^{1/2} J(0))^{-2k}\,.
  \]
  Furthermore, there exists a universal constant $c>0$ such that for any $t\ge 0$,
  \[
    \bbE[X_N^2 \la \bone\{N^{1/3} |R_{1,2}| > t\} \ra] \le 2\exp(-ct)\,.
  \]
\begin{proof}
  Immediate from Propositions~\ref{p:J-of-zero} and \ref{p:reweight-annealed-overlap-ub}.
\end{proof}
\end{cor}
\begin{lem}[{\cite[Theorem 3]{chen2023gaussian}}]\label{l:convex-gaussian-lower-tail}
  Suppose $\bg \sim \cN(0,\bI_n)$ and $F:\bbR^n\rightarrow\bbR$ is convex with $\bbE[F(\bg)^2] < \infty$.
  Then, for $M \in \bbR$ the median of $F(\bg)$ and any $t\ge 0$,
  \[
    \bbP(F(\bg) \le M - t) \le \exp\lt(-\fr{t^2}{2\Var(F(\bg))}\rt)\,.
  \]
\end{lem}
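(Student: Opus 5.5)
The plan is to reduce the statement to a one-dimensional problem using Ehrhard's inequality, and then to lower bound the variance of a convex function of a single standard Gaussian. Assume $P:=\bbP(F(\bg)\le M-t)>0$, since otherwise there is nothing to prove. Write $P=\Phi(-s)$ with $s\ge 0$, where $\Phi$ is the standard Gaussian CDF; $s\ge 0$ because $P\le 1/2$. By the Gaussian tail bound $\Phi(-s)\le e^{-s^2/2}$, it suffices to show
\[
  \Var(F(\bg)) \ge \fr{t^2}{s^2}\,,
\]
so that $P \le e^{-s^2/2} \le \exp(-t^2/(2\Var F(\bg)))$.

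\textbf{Step 1 (Ehrhard).} Since $F$ is convex, its sublevel sets $\{F\le u\}$ are convex. These sets also satisfy $\lambda\{F\le u\}+(1-\lambda)\{F\le v\}\subseteq\{F\le \lambda u+(1-\lambda)v\}$. Ehrhard's inequality (valid for convex sets, and in fact for all Borel sets) then implies that $u\mapsto \Phi^{-1}(\bbP(F(\bg)\le u))$ is concave on the interval where it is finite. Equivalently, the Gaussian quantile map $q(z):=\inf\{u:\bbP(F(\bg)\le u)\ge\Phi(z)\}$ is nondecreasing and convex. Moreover $F(\bg)\stackrel{d}{=}q(g)$ for a one-dimensional standard Gaussian $g$. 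By construction $q(0)=M$ and $q(-s)\le M-t$.

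\textbf{Step 2 (one-dimensional variance bound).} We now need $\Var(q(g))\ge t^2/s^2$ for a nondecreasing convex $q$ whose secant slope on $[-s,0]$ is at least $t/s$. Convexity gives two supporting-line bounds: $q(z)\ge M+(t/s)z$ for $z\ge 0$, and also for $z\le -s$. The natural first attempt is the Stein covariance bound
\[
  \Var(q(g))\ge \operatorname{Cov}(q(g),g)^2=(\bbE q'(g))^2 .
\]
Since $q'\ge 0$ everywhere and $q'\ge t/s$ on $[0,\infty)$, this gives $\Var\ge t^2/(4s^2)$. That proves the lemma with constant $8$ in place of $2$ in the exponent, which already suffices for every use later in the paper (only a sub-Gaussian lower tail with variance proxy $O(\Var F)$ is needed).

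\textbf{Main obstacle: the sharp constant.} I expect recovering the exact constant to be the hard part. To do so, I would compare $q$ directly with the linear function $\ell(z)=M+(t/s)z$ instead of passing through $\bbE q'$. The function $q-\ell$ is convex, vanishes at $0$, and is $\le 0$ at $-s$. A convex rearrangement argument should then show that among admissible $q$ the variance is minimized by a linear function. In particular $\ell(g)$ itself has variance $t^2/s^2$, matching the required bound. If this extremal argument resists, I would fall back on the constant-$8$ version above. I would also state that the exact form is taken from \cite[Theorem 3]{chen2023gaussian}, since the only further inputs are the moment condition $\bbE F^2<\infty$, which makes $\Var$ finite and $q(g)$ square integrable, and the routine measure-theoretic handling of atoms of the law of $F(\bg)$ in defining $q$.
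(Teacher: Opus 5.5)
Your Step~1 is sound, and in fact lossless. Ehrhard's inequality gives $F(\bg)\stackrel{d}{=}q(g)$ with $q$ convex and nondecreasing, $q(0)=M$ and $q(-s)\le M-t$. Conversely, every such $q$ is realized by the convex function $F(\bx)=q(x_1)$. Your Stein/Cauchy--Schwarz bound $\Var(q(g))\ge(\bbE q'(g))^2\ge t^2/(4s^2)$ is also correct. But it proves the inequality with $8\Var(F(\bg))$ in the exponent, not the stated one with $2\Var(F(\bg))$.

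Your plan for the sharp constant cannot work, because its target $\Var(q(g))\ge t^2/s^2$ is false. Take
\[
  q(z)=M-t+\fr{t}{s}(z+s)_+\,.
\]
It is convex and nondecreasing, has median $q(0)=M$, has secant slope exactly $t/s$ on $[-s,0]$, and satisfies $\bbP(q(g)\le M-t)=\Phi(-s)$ exactly. By the Gaussian Poincar\'e inequality,
\[
  \Var(q(g))\le\fr{t^2}{s^2}\,\bbP(g>-s)=\fr{t^2}{s^2}\,\Phi(s)<\fr{t^2}{s^2}\,.
\]
So the linear function $\ell$ does not minimize the variance under your secant constraint: the hinge at $-s$ beats it, and no rearrangement argument can give $\Var(q(g))\ge t^2/s^2$. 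The lemma survives for this example only because of the slack you discarded in $\Phi(-s)\le e^{-s^2/2}$. By your own lossless reduction, the lemma is equivalent to
\[
  \Var(q(g))\ge\fr{(q(0)-q(-s))^2}{2\log(1/\Phi(-s))}\qquad\text{for all convex nondecreasing } q \text{ and } s\ge 0\,.
\]
Any proof along your lines has to keep the exact Gaussian tail and control non-linear, hinge-type competitors, and nothing in the proposal does this. There is also a smaller slip in the same step. The bound $q(z)\ge M+(t/s)z$ for $z\le -s$ is false when the secant slope $\alpha$ of $q$ on $[-s,0]$ exceeds $t/s$. Convexity only gives $q(z)\ge M+\alpha z$ there, which is weaker for $z<0$; you do not use it in the constant-$8$ argument.

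The paper gives no proof of its own here. It quotes the sharp statement from \cite{chen2023gaussian}, where the constant comes from a convexity property of logarithmic moment generating functions of convex Gaussian functionals. That is exponential-moment control of the lower tail, not a variance inequality for the quantile function. For a self-contained route, note that your Step~1 plus Jensen gives $M=q(0)\le\bbE q(g)=\bbE F(\bg)$. So it suffices to show $\bbP(F(\bg)\le\bbE F(\bg)-t)\le e^{-t^2/(2\Var(F(\bg)))}$. By Chernoff, this would follow from $\log\bbE e^{-\lambda(F(\bg)-\bbE F(\bg))}\le\lambda^2\Var(F(\bg))/2$ for $\lambda\ge0$, which is the type of estimate a log-MGF convexity result provides.

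Your remark that the constant $8$ suffices for this paper is essentially right. The exponents in Lemmas~\ref{l:fe-lower-tail-crude} and \ref{l:fe-lower-tail} only degrade by universal factors. The estimate of $B(a_k,b_{k+1})$ then needs $a_1$ above a universal constant, which a smaller $c$ in Lemma~\ref{l:sequences} provides. As a proof of the lemma as stated, however, the proposal has a genuine gap at exactly the step you flagged.
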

\begin{lem}\label{l:fe-lower-tail-crude}
  There exists a universal constant $C>0$ such that for all $t\ge 0$,
  \[
    \bbP\lt(F_N \le \fr{N}{4} - \fr{\log N}{12} - t\rt) \le 2\exp\lt(-\fr{t^2}{C\log^2 N}\rt)\,.
  \]
\begin{proof}
  We can view $F_N = F_N(\bW)$ as a convex function of the i.i.d. standard Gaussians $(g_{i,j})_{1\le i\le j\le N}$
  \balnn
    g_{i,i} &= \sqrt{N/2} \cdot W_{i,i}\,, & 
    g_{i,j} &= \sqrt{N} \cdot W_{i,j}\,.
  \ealnn
  Let $M$ be the median of $F_N$, $m = \fr{N}{4} - \fr{\log N}{12}$, and $\Delta = |M-m|$.
  By Theorem~\ref{t:compare-cube-sphere} and Chebyshev's inequality, $\bbP(|F_N - F^\sph_N| \ge 1) \lesssim N^{-1/3}$.
  Therefore, there exists a universal constant $C$ such that for all $t\in \bbR$,
  \[
  	\bbP(F^\sph_N \ge t + 1) - CN^{-1/3} \le \bbP(F_N \ge t) \le \bbP(F^\sph_N \ge t - 1) + CN^{-1/3}\,.
  \]
  Together with the spherical model's CLT \eqref{e:spherical-critical-clt} this implies $\Delta = o(\sqrt{\log N})$. 
  Meanwhile, by Theorem~\ref{t:CL19}, there exists a universal constant $C'$ such that $\Var(F_N) \le C' \log^2 N$.
  By Lemma~\ref{l:convex-gaussian-lower-tail},
  \[
    \bbP(F_N \le m - t)
    \le \bbP(F_N \le M - (t - \Delta))
    \le \exp\lt(-\fr{(t - \Delta)_+^2}{2C' \log^2 N}\rt)\,.
  \]
  We will show the result holds with $C = 8C'$.
  If $t \ge \sqrt{\log N}$, then $(t - \Delta)_+ \ge t/2$, so
  \[
    \exp\lt(-\fr{(t - \Delta)_+^2}{2C' \log^2 N}\rt)
    \le \exp\lt(-\fr{t^2}{8C' \log^2 N}\rt)\,,
  \]
  as desired.
  If $t \le \sqrt{\log N}$, then the result is trivial because
  \[
    2\exp\lt(-\fr{t^2}{C\log^2 N}\rt)
    \ge 2 \exp\lt(-\fr{1}{C\log N}\rt) \ge 1\,. \qedhere
  \]
\end{proof}
\end{lem}

\begin{proof}[Proof of Proposition~\ref{p:overlap-ub-24mt}]
  We will prove the stronger statement that for $k=1,2$,
  \[
    \bbE \la R_{1,2}^{2k} \ra \lesssim \min\lt(N^{1/2} J(0), N^{1/3+\eps} \rt)^{-{2k}}\,.
  \]
  Let $c$ be given by Corollary~\ref{c:reweight-annealed-overlap-ub}.
  Define $\tau = N^{-(2k-1)/(6k) - \eps}$, $\eta = \exp(-\fr{c}{3} N^{1/(6k) - \eps})$, and
  \balnn
    Y &= \la R_{1,2}^{2k} \ra\,, &
    Y_{\le} &= \la R_{1,2}^{2k} \bone\{|R_{1,2}| \le \tau\} \ra\,, & 
    Y_{>} &= \la R_{1,2}^{2k} \bone\{|R_{1,2}| > \tau\} \ra\,, & 
    Y_{+} &= \la \bone\{|R_{1,2}| > \tau\} \ra\,.
  \ealnn
  We write
  \[
    \bbE Y
    = \bbE[\bone\{X_N \ge 1/2\} Y]
    + \bbE[\bone\{X_N < 1/2\} Y_{\le}]
    + \bbE[\bone\{\eta \le X_N < 1/2\} Y_{>}]
    + \bbE[\bone\{X_N < \eta\} Y_{>}]\,.
  \]
  We estimate these terms individually.
  The first assertion of Corollary~\ref{c:reweight-annealed-overlap-ub} gives
  \[
    \bbE[\bone\{X_N \ge 1/2\} Y] \le 4\bbE[X_N^2 Y] \lesssim (N^{1/2} J(0))^{-2k}\,.
  \]
  By Theorem~\ref{t:compare-cube-sphere} and Chebyshev's inequality, $\bbP(X_N < 1/2) \lesssim N^{-1/3}$.
  Since $Y_{\le} \le \tau^{2k}$ deterministically,
  \[
    \bbE[\bone\{X_N < 1/2\} Y_{\le}]
    \lesssim N^{-1/3} \tau^{2k}
    = N^{-2k(1/3 + \eps)}\,.
  \]
  Next, note that
  \[
    \bbE[\bone\{\eta \le X_N < 1/2\} Y_{>}]
    \le \eta^{-2} \bbE[X_N^2 Y_{>}]
    \le \eta^{-2} \bbE[X_N^2 Y_{+}]
    \stackrel{(*)}{\le} 2\eta^{-2} \exp(-cN^{1/3} \tau)
    = 2\eta
    \lesssim N^{-2k(1/3 + \eps)}\,,
  \]
  where the step marked $(*)$ follows from the second assertion of Corollary~\ref{c:reweight-annealed-overlap-ub}.
  Finally, let $m = \fr{N}{4} - \fr{\log N}{12}$ and $C$ be given by Lemma~\ref{l:fe-lower-tail-crude}.
  By Lemma~\ref{l:fe-lower-tail-crude} and Markov's inequality,
  \balnn
    \bbE[\bone\{X_N < \eta\} Y_{>}]
    &\le \bbP(X_N < \eta)
    \le \bbP(Z_N < e^m \eta^{1/2}) + \bbP(Z^\sph_N > e^m \eta^{-1/2}) \\
    &\le 2\exp\lt(-\fr{(\fr12 \log \fr{1}{\eta})^2}{C\log^2 N}\rt) + \fr{\bbE[Z^\sph_N]}{e^m \eta^{-1/2}} \\
    &= 2\exp\lt(-\fr{c^2 N^{1/(3k) - 2\eps}}{36C\log^2 N}\rt) + N^{1/12} \exp\lt(-\fr{cN^{1/(6k) - \eps}}{6}\rt)
    \lesssim N^{-2k(1/3 + \eps)}\,. \qedhere
  \ealnn
\end{proof}

\subsection{Cavity estimate of correlated overlap moment}
\label{ss:cavity}

The contents of this subsection are adapted essentially verbatim from \cite{dey2026fluctuations}, specialized to $\beta = 1$.
Note that while the main result of \cite{dey2026fluctuations} assumes $\beta = 1 - cN^{-1/3}$ for fixed $c>0$, the parts we cite hold for general $\beta$.
We first introduce a correlated Gibbs average $\la \cdot \ra_t$.
Since the influential work of Chatterjee \cite{chatterjee2009disorder}, controlling the free energy variance through correlated overlap moments $\bbE \la R_{1,2}^2 \ra_t$ has become a widely used strategy in the free energy fluctuations literature \cite{chen2019order,dey2026fluctuations}.
The main result of this section is Proposition~\ref{p:cavity} below, which provides a two-sided bound for $\bbE \la R_{1,2}^2 \ra_t$.
\begin{dfn}\label{d:correlated-gibbs-average}
  Let $\bW^0,\bW^1,\bW^2$ be independent copies of $\bW$, and for $t\in[0,1]$ define
  \balnn
    \bW_t^1 &= \sqrt{t}\bW^0 + \sqrt{1-t}\bW^1\,, &
    \bW_t^2 &= \sqrt{t}\bW^0 + \sqrt{1-t}\bW^2\,.
  \ealnn
  We write $\la \cdot \ra_t$ for the average over $\bx^1,\bx^2\in \Sigma_N$ sampled independently from the Gibbs measures \eqref{e:gibbs-msr} (with $\beta=1$) with disorder matrices $\bW_t^1$ and $\bW_t^2$, respectively.
  Since we fix $\beta=1$, there will be no confusion with the notation $\la \cdot \ra_\beta$ in \S\ref{s:intro}.
  We continue to use the unsubscripted notation $\la \cdot \ra$ for average over the original Gibbs measure at $t=1$.
\end{dfn}
\begin{ppn}\label{p:cavity}
  There exists a universal constant $C>0$ such that for all $t\in [0,1]$,
  \[
    \fr1N 
    - \fr2N \bbE \la R_{1,2}^2\ra 
    - C \sqrt{\bbE \la R_{1,2}^2 \ra_t \cdot \bbE \la R_{1,2}^4\ra}
    - C \bbE \la R_{1,2}^4\ra
    \le (1-t) \bbE \la R_{1,2}^2 \ra_t 
    \le \fr1N + C \bbE \la R_{1,2}^4\ra\,.
  \]
\end{ppn}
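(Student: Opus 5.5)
We plan to run Talagrand's cavity method on the last spin, applied to the coupled pair of Hamiltonians with disorders $\bW_t^1,\bW_t^2$. Writing $R_{1,2} = \fr1N\sum_i x^1_i x^2_i$ and using symmetry among sites, we have
\[
  \bbE \la R_{1,2}^2 \ra_t = \bbE\la x^1_N x^2_N R_{1,2}\ra_t = \fr1N + \bbE\la x^1_N x^2_N R^-_{1,2}\ra_t\,,
\]
where $R^-_{1,2} = \fr1N\sum_{i<N} x^1_i x^2_i$ and $(x^\ell_N)^2 = 1$.

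We then interpolate on the couplings of spin $N$ to the rest. For $s\in[0,1]$, we replace the interaction $x_N \sum_{i<N} W^\ell_{t,iN} x_i$ by $\sqrt{s}$ times itself. The two copies' cavity fields are correlated with covariance $t$ per entry, and the diagonal term is handled separately as an $O(1/N)$ perturbation. Let $\nu_s(\cdot)$ denote $\bbE\la\cdot\ra$ under the interpolated pair of measures, and set $f = x^1_N x^2_N R^-_{1,2}$. At $s=0$, spin $N$ is decoupled and uniform, so $\nu_0(f) = 0$.

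The standard Gaussian integration by parts computation gives $\nu'_s(f)$ as a sum of terms of the form $\nu_s(\pm\, \eps_{\ell\ell'} f\,(R^-_{\ell,\ell'} - \text{const}))$ over replica pairs drawn from the two systems. Here $\eps_{\ell\ell'}$ is a product of last spins, and the coefficient of the cross-system pair is $t$ rather than $1$. At $s=0$ only the pair $(1,2)$ term survives, and it equals $t\,\nu_0((R^-_{1,2})^2) + $ (self-overlap corrections that vanish since $R_{\ell,\ell}=1$). Carrying this to second order in $s$ and inverting as Talagrand does, the leading identity should take the form
\[
  \nu(f) \approx t\,\nu\bigl((R^-_{1,2})^2\bigr) + \text{(higher-order replica terms)}\,.
\]
Because the same-system self-terms cancel exactly at $\beta = 1$, this yields
\[
  (1-t)\,\bbE\la R_{1,2}^2\ra_t \approx \fr1N + \text{error}\,.
\]
Here the factor $\beta^2 = 1$ makes the criticality visible, and the exact replacement of $R^-$ by $R$ produces the $-\fr2N\bbE\la R_{1,2}^2\ra$ type term.

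\textbf{The main obstacle} is bounding the error terms. These are third-order quantities such as $\nu(R_{1,2}R_{1,3}R_{2,4})$ and $\nu(|R|^3)$-type products mixing the $t$-coupled replicas with same-system replicas. For the upper bound, we expect every such term to be a product of overlaps of total degree at least $4$. For terms involving only same-system replicas, we bound them by H\"older and the replica symmetry of $\la\cdot\ra$ by $C\,\bbE\la R_{1,2}^4\ra$. For the lower bound, we keep the terms involving one cross-system overlap and bound them by Cauchy--Schwarz as $\sqrt{\bbE\la R_{1,2}^2\ra_t\,\bbE\la R_{1,2}^4\ra}$. This uses the fact that each marginal of $\la\cdot\ra_t$ is a genuine SK Gibbs measure, so same-system moments equal $\bbE\la R^4_{1,2}\ra$.

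Positivity of certain second-order terms, which are squares, is what gives the one-sided upper bound without the square-root term. Since this bookkeeping is carried out in \cite{dey2026fluctuations} for general $\beta$, we would follow their derivation verbatim and only verify that the constants are uniform in $t\in[0,1]$.
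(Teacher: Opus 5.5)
Your proposal follows essentially the same route as the paper, which obtains the statement by rearranging the Dey--Kang cavity expansion $\bbE\la R_{1,2}^2\ra_t = \fr1N + t\,\bbE\la (R^-_{1,2})^2\ra_{t,0} - 2t\,\bbE\la R^-(\bx^{1,1},\bx^2)R^-(\bx^{1,2},\bx^2)R^-(\bx^{1,1},\bx^{1,2})\ra_{t,0} + (\text{third-derivative remainder})$ and inserting their bounds, including $0\le \bbE\la R_{1,2}^2\ra_t-\bbE\la (R^-_{1,2})^2\ra_t\le \fr2N\bbE\la R_{1,2}^2\ra$. One correction: the second-order term is this nonnegative degree-3 triple product, not a term of degree at least $4$, so the upper bound comes from dropping it by sign (as your final paragraph says), and Cauchy--Schwarz applied to it produces the square-root term in the lower bound.
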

Proposition~\ref{p:cavity} is a direct consequence of several lemmas from \cite{dey2026fluctuations}, which are proved therein using Talagrand's cavity method. 
We first define several objects arising in this proof.
For $\bx^1,\bx^2 \in \Sigma_N$, define
\[
  R^-(\bx^1,\bx^2) = \fr1N \sum_{i=1}^{N-1} x^1_i x^2_i\,.
\]
\begin{dfn}\label{d:cavity-gibbs-average}
  Consider $t,s\in [0,1]$. 
  Let $\bW_t^1$, $\bW_t^2$ be as in Definition~\ref{d:correlated-gibbs-average}.
  Let $\bD_s$ be the diagonal matrix with $(\bD_s)_{N,N} = \sqrt{s}$ and all other diagonal entries equal to $1$.
  Then define
  \balnn
    \bW_{t,s}^1 &= \bD_s\bW_t^1\bD_s\,, & 
    \bW_{t,s}^2 &= \bD_s\bW_t^2\bD_s\,.
  \ealnn
  We write $\la \cdot \ra_{t,s}$ for the average over $\bx^1,\bx^2\in \Sigma_N$ sampled independently from the Gibbs measures \eqref{e:gibbs-msr} (with $\beta=1$) with disorder matrices $\bW_{t,s}^1$ and $\bW_{t,s}^2$, respectively.
  Note that $\la \cdot \ra_{t,1} = \la \cdot \ra_t$.

  We will write $\bx^{1,1},\bx^{1,2},\ldots$ and $\bx^{2,1},\bx^{2,2},\ldots$ for independent replicas of $\bx^1$ and $\bx^2$, respectively.
  In the below lemmas we abbreviate $R_{1,2} = R(\bx^1,\bx^2)$ and $R^-_{1,2} = R^-(\bx^1,\bx^2)$, but will make the replicas explicit in any Gibbs average involving more than two replicas.
\end{dfn}
\begin{lem}[{\cite[Equation 7]{dey2026fluctuations}}]\label{l:cavity-expansion}
  Let $f_1(\bx^1,\bx^2) = x^1_N x^2_N R^-(\bx^1,\bx^2)$.
  For all $t\in [0,1]$, there exists $s^* = s^*(t) \in [0,1]$ such that
  \balnn
    \bbE \la R_{1,2}^2 \ra_t
    &= \fr1N + t \bbE \la (R^-_{1,2})^2 \ra_{t,0}
    - 2t \bbE \la R^-(\bx^{1,1},\bx^2) R^-(\bx^{1,2},\bx^2) R^-(\bx^{1,1},\bx^{1,2})\ra_{t,0} \\
    &\qquad + \fr16 \cdot \fr{\partial^3}{\partial s^3} \bbE \la f_1(\bx^1,\bx^2) \ra_{t,s} \Big|_{s=s^*}\,.
  \ealnn
\end{lem}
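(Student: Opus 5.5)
This is the cavity expansion of \cite{dey2026fluctuations}: a Taylor expansion in the interpolation parameter $s$ that switches on the interactions of the last spin. By symmetry of spins, $\bbE \la R_{1,2}^2 \ra_t = \bbE \la x^1_N x^2_N R_{1,2} \ra_t$. Writing $R_{1,2} = R^-_{1,2} + x^1_N x^2_N/N$ gives
\[
  \bbE \la R_{1,2}^2 \ra_t = \fr1N + \bbE \la f_1(\bx^1,\bx^2)\ra_{t,1}\,,
\]
using $(x^1_Nx^2_N)^2=1$. We then set $\phi(s) = \bbE \la f_1\ra_{t,s}$ and apply Taylor's theorem with Lagrange remainder on $[0,1]$:
\[
  \phi(1) = \phi(0)+\phi'(0)+\tfrac12\phi''(0)+\tfrac16\phi'''(s^*)\,.
\]
It remains to identify the first three terms.

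For $\phi(0)$: at $s=0$ the last spin decouples. Since $\bD_0$ zeroes the $N$-th row and column, $x^1_N$ and $x^2_N$ become independent uniform signs under both Gibbs measures, independent of the other spins. Hence $\phi(0)=0$.

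For the derivatives: the Hamiltonian under $\bW^\ell_{t,s}$ is $H^-(\bx) + \sqrt{s}\, x_N \sum_{i<N} W^\ell_{t,iN} x_i + \tfrac{s}{2}W^\ell_{t,NN}$. The diagonal term is constant in $\bx$ and drops out of the Gibbs measure. The derivative in $s$ is computed by Gaussian integration by parts, as in Talagrand's cavity method (Guerra-type derivative formula). The covariance structure is:
\begin{itemize}
  \item the same copy $\ell$ has covariance $1/N$ per entry, producing terms in $R^-(\bx^{\ell,a},\bx^{\ell,b})\, x^{\ell,a}_N x^{\ell,b}_N$;
  \item the copies $1$ and $2$ share the $\sqrt t\,\bW^0$ component, producing cross terms with factor $t$.
\end{itemize}
The result is $\phi'(s) = \bbE\la f_1 \cdot \cR\ra_{t,s}$, where $\cR$ is a linear combination of $x_N x_N' R^-$ terms over pairs of replicas, minus the corresponding terms with fresh replicas.

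At $s=0$, the independence of the $N$-th spins kills every term except those in which all last-spin signs pair up. This gives:
\begin{itemize}
  \item $\phi'(0) = t\,\bbE\la (R^-_{1,2})^2\ra_{t,0}$, coming from the cross pair $(1,2)$;
  \item $\tfrac12\phi''(0) = -2t\,\bbE\la R^-(\bx^{1,1},\bx^2)R^-(\bx^{1,2},\bx^2)R^-(\bx^{1,1},\bx^{1,2})\ra_{t,0}$, arising from the term pairing $(1,2)$ with an intra-copy pair $(1,1')$, plus its symmetric counterpart from copy $2$.
\end{itemize}
The factor $2$ in the second line comes from the symmetry $\bx^1\leftrightarrow\bx^2$. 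The remaining second-order terms vanish by sign parity at $s=0$.

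The main obstacle is the bookkeeping in $\phi''(0)$. One must track the replica-count-dependent coefficients that integration by parts generates (new replicas enter with coefficients depending on the number already present), and verify that all terms with unpaired $x_N$ vanish while the surviving coefficient is exactly $-2t$. Once this is checked, the third-order remainder is left untouched, as in the statement.
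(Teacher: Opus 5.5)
Your proposal is correct, and it is the same Talagrand cavity computation that the paper imports from \cite[Equation 7]{dey2026fluctuations}: the site-symmetry reduction to $\fr1N + \bbE\la f_1\ra_{t,1}$, the Lagrange-form Taylor expansion in $s$, $\phi(0)=0$, and $\phi'(0) = t\,\bbE\la (R^-_{1,2})^2\ra_{t,0}$ all check out. For the bookkeeping you deferred, $\phi''(0)$ has four surviving terms, each equal to $-t\,\bbE\la R^-(\bx^{1,1},\bx^2)R^-(\bx^{1,2},\bx^2)R^-(\bx^{1,1},\bx^{1,2})\ra_{t,0}$ after using exchangeability of the two copies: for each choice of copy for the fresh replica, the weight-$1$ same-copy fresh-replica term of $\phi'$ followed by a weight-$t$ cross-copy pair, and the weight-$t$ cross-copy fresh-replica term followed by a weight-$1$ same-copy pair; hence $\fr12\phi''(0)$ gives exactly the coefficient $-2t$.
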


\begin{lem}[{\cite[Lemmas 2.3--2.6]{dey2026fluctuations}}]\label{l:cavity-bounds}
  There is a universal constant $C>0$ such that for all $t\in [0,1]$,
  \balnn
    \lt|\bbE \la (R^-_{1,2})^2 \ra_t - \bbE \la (R^-_{1,2})^2 \ra_{t,0} \rt| &\le C \bbE \la R_{1,2}^4 \ra\,, \\
    0 \le \bbE \la R_{1,2}^2 \ra_t - \bbE \la (R^-_{1,2})^2 \ra_t &\le \fr{2}{N} \bbE \la R_{1,2}^2 \ra\,, \\
    0 \le \bbE \la R^-(\bx^{1,1},\bx^2) R^-(\bx^{1,2},\bx^2) R^-(\bx^{1,1},\bx^{1,2})\ra_{t,0}
    &\le C \sqrt{\bbE \la R_{1,2}^2\ra_t \cdot \bbE \la R_{1,2}^4 \ra}\,, \\
    \sup_{s\in [0,1]} \lt|\fr{\partial^3}{\partial s^3} \bbE \la f_1(\bx^1,\bx^2) \ra_{t,s}\rt| 
    &\le C \bbE \la R_{1,2}^4 \ra\,.
  \ealnn
\end{lem}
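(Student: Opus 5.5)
The plan is to follow the cavity argument of \cite{dey2026fluctuations} and prove the four bounds one at a time. Three general facts will be used throughout.

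\emph{Monotonicity in $t$.} For each $k$, the map $t \mapsto \bbE\la R_{1,2}^{2k}\ra_t$ is nondecreasing on $[0,1]$. To see this, expand $\bbE\la R_{1,2}^{2k}\ra_t$ through the Gaussian Ornstein--Uhlenbeck semigroup in the shared component $\bW^0$. This writes it as $\sum_{n} t^n c_n$ with every $c_n \ge 0$, since each $c_n$ is a sum of squares of Gibbs averages of the common disorder. Consequently $\bbE\la R_{1,2}^{2k}\ra_t \le \bbE\la R_{1,2}^{2k}\ra$.

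\emph{Comparison in $s$.} For nonnegative overlap functionals $g$ one has $\bbE\la g\ra_{t,s} \le C\,\bbE\la g\ra_{t,1}$. Changing $s$ only rescales the couplings of the single spin $x_N$, which have size $O(N^{-1/2})$. The comparison then follows from the standard cavity observation that the resulting Gibbs reweighting $\exp(x_N\sum_j W_{N,j}x_j)$ has bounded exponential moments under $\bbE$.

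\emph{Symmetry of coordinates.} Under $\la\cdot\ra_t$ the Gibbs averages are exchangeable in the coordinates, so quantities such as $\bbE\la x^1_Nx^2_N R_{1,2}\ra_t = \bbE\la R_{1,2}^2\ra_t$ hold.

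\textbf{Second bound.} Expand the overlap as
\[
  R_{1,2}^2 = (R^-_{1,2})^2 + \fr2N x^1_Nx^2_N R_{1,2} - \fr1{N^2}.
\]
Taking $\bbE\la\cdot\ra_t$ and using coordinate symmetry gives
\[
  \bbE\la R_{1,2}^2\ra_t - \bbE\la (R^-_{1,2})^2\ra_t = \fr2N\bbE\la R_{1,2}^2\ra_t - \fr1{N^2}.
\]
The upper bound then follows from monotonicity in $t$. For the lower bound I will use $\bbE\la R_{1,2}^2\ra_t \ge \bbE\la R_{1,2}^2\ra_0 \ge \fr1N$. The last inequality holds because at $t=0$ the two systems are independent, so the diagonal terms $i=j$ in $\bbE\la R_{1,2}^2\ra_0 = N^{-2}\sum_{i,j}\bbE[\la x_ix_j\ra\la x_ix_j\ra']$ contribute exactly $1/N$ and every term is nonnegative.

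\textbf{First and fourth bounds.} These come from Taylor expansion in $s$. Set $\varphi(s) = \bbE\la (R^-_{1,2})^2\ra_{t,s}$. Differentiating in $s$ with Gaussian integration by parts (on $W_{N,j}$) produces replica sums. Each summand contains factors $x^a_Nx^b_N$ multiplied by polynomials in the overlaps $R^-$.
\begin{enumerate}[label=(\roman*)]
  \item At $s=0$ the spin $x_N$ is decoupled. Flipping it shows that every term with an unpaired $x_N$ vanishes, so $\varphi'(0)=0$.
  \item Every surviving term in $\varphi''$, and every term in $\partial_s^3\bbE\la f_1\ra_{t,s}$, contains overlap monomials of total degree at least $4$. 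By H\"older's inequality over replicas each such term is bounded by $\bbE\la R_{1,2}^4\ra_{t,s}$, up to the $R$ versus $R^-$ change, which costs $O(1/N)$.
  \item This quantity is at most $C\bbE\la R_{1,2}^4\ra_t$ by the comparison in $s$, and hence at most $C\bbE\la R_{1,2}^4\ra$ by monotonicity in $t$.
\end{enumerate}
Combining (i)--(iii) with $|\varphi(1)-\varphi(0)| \le \sup_s|\varphi''(s)|$ gives the first bound, and (ii)--(iii) directly give the fourth.

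\textbf{Third bound.} At $s=0$, the positivity comes from writing the triple product as
\[
  N^{-3}\sum_{i,j,k}\bbE\bigl[\la x_ix_j\ra^{(1)}\la x_jx_k\ra^{(1)}\la x_ix_k\ra^{(2)}\bigr]
\]
(up to the index range $\le N-1$). Here $\la\cdot\ra^{(1)},\la\cdot\ra^{(2)}$ are the two correlated Gibbs measures. This is the trace $\bbE\Tr(\bA\bA\bB)$ of the positive semidefinite Gibbs correlation matrices $\bA=(\la x_ix_j\ra^{(1)})$ and $\bB=(\la x_ix_j\ra^{(2)})$. It is therefore nonnegative, once I check that the $\bW^0$-coupling preserves positivity via the same semigroup expansion used for monotonicity. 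For the upper bound, I apply Cauchy--Schwarz as
\[
  |R_aR_bR_c| \le \tfrac12\bigl(R_a^2+R_b^2\bigr)|R_c|,
\]
and then once more to get $\sqrt{\bbE\la R^2\ra_{t,0}\,\bbE\la R^4\ra_{t,0}}$. Finally I transfer both factors to $\bbE\la R_{1,2}^2\ra_t$ and $\bbE\la R_{1,2}^4\ra$ using the two comparison facts.

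I expect the main obstacle to be the comparison in $s$ and the bookkeeping in $\partial_s^3$. The bounds have to be uniform in $s\in[0,1]$ and must land on the untilted moment $\bbE\la R_{1,2}^4\ra$ with a universal constant. If the exponential-moment argument for the comparison in $s$ is not clean, I would instead follow \cite{dey2026fluctuations} and bound $\partial_s\log\bbE\la g\ra_{t,s}$ by a constant, then integrate over $s$ with Gr\"onwall's inequality.
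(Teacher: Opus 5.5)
Your proposal is correct. It is essentially the Talagrand-style cavity argument that the paper imports from Dey--Kang without reproducing it: monotonicity in $t$, a comparison in $s$, the exact identity $R_{1,2}^2=(R^-_{1,2})^2+\frac{2}{N}x^1_Nx^2_NR_{1,2}-\frac{1}{N^2}$, and a Taylor expansion in $s$ with $\varphi'(0)=0$.

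Two small remarks. First, the comparison $\bbE\la g\ra_{t,s}\le C\,\bbE\la g\ra_{t,1}$ should rest on your Gr\"onwall fallback, where the derivative formula gives $|\partial_s\bbE\la g\ra_{t,s}|\le C\,\bbE\la g\ra_{t,s}$ for $g\ge 0$; bounded exponential moments of the cavity field alone do not control the ratio of Gibbs weights. Second, the positivity in the third bound needs no semigroup check, because $\Tr(\bA^2\bB)=\Tr(\bA\bB\bA)\ge 0$ holds for every realization of the disorder.
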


\begin{proof}[Proof of Proposition~\ref{p:cavity}]
  The conclusion of Lemma~\ref{l:cavity-expansion} rearranges to
  \balnn
    (1-t) \bbE \la R_{1,2}^2 \ra_t
    &=
    \fr1N 
    - t \lt[
      \bbE \la R_{1,2}^2 \ra_t 
      - \bbE \la (R^-_{1,2})^2 \ra_t
    \rt]
    - t \lt[
      \bbE \la (R^-_{1,2})^2 \ra_t
      - \bbE \la (R^-_{1,2})^2 \ra_{t,0}
    \rt] \\
    &\qquad- 2t \bbE \la R^-(\bx^{1,1},\bx^2) R^-(\bx^{1,2},\bx^2) R^-(\bx^{1,1},\bx^{1,2})\ra_{t,0}
    + \fr16 \cdot \fr{\partial^3}{\partial s^3} \bbE \la f_1(\bx^1,\bx^2) \ra_{t,s} \Big|_{s=s^*}\,.
  \ealnn
  These terms are bounded by Lemma~\ref{l:cavity-bounds}.
\end{proof}

\subsection{Integrating correlated overlaps}
\label{ss:variance}

We are now ready to complete the proof of  Theorem~\ref{t:main}.
We first record two useful properties of the correlated overlap moments $\bbE\la R_{1,2}^{2k} \ra_t$, due to Chatterjee.
\begin{fac}[{\cite[Theorem~3.1]{chatterjee2009disorder}}]\label{f:correlated-overlap-incr}
  For any integer $k\ge 0$, $[0,1] \ni t \mapsto \bbE\la R_{1,2}^{2k} \ra_t$ is nondecreasing.
\end{fac}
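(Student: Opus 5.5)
The plan is to realize $\bbE\la R_{1,2}^{2k}\ra_t$ as a norm of an expansion of the Gibbs measure in an orthogonal basis adapted to the Ornstein--Uhlenbeck interpolation, so that $t$ enters only through nonnegative powers $t^{\ell}$. Fix a realization and write
\[
  \la R_{1,2}^{2k}\ra_t = \fr{1}{N^{2k}}\sum_{i_1,\dots,i_{2k}} \la x_{i_1}\cdots x_{i_{2k}}\ra_{\bW_t^1}\,\la x_{i_1}\cdots x_{i_{2k}}\ra_{\bW_t^2}\,,
\]
using that $\bx^1,\bx^2$ are independent given $(\bW^0,\bW^1,\bW^2)$. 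For each multi-index put $\phi(\bW) = \la x_{i_1}\cdots x_{i_{2k}}\ra_{\bW}$, a bounded smooth function of the Gaussian disorder. Then $\bbE[\phi(\bW_t^1)\phi(\bW_t^2)]$ is the covariance of $\phi$ evaluated at two $t$-correlated copies of the same Gaussian vector. Equivalently, after integrating out $\bW^1,\bW^2$ given $\bW^0$, it equals $\bbE[(P_{s}\phi)(\bW^0)^2]$ with $P_s$ the Ornstein--Uhlenbeck semigroup at $e^{-s}=\sqrt t$.

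Next I would expand $\phi$ in the Wiener chaos (Hermite) decomposition $\phi=\sum_{\ell\ge0}\phi_\ell$ with respect to the i.i.d. standard Gaussians $g_{i,j}$ underlying $\bW$. Since $P_s\phi_\ell = e^{-\ell s}\phi_\ell$ and the chaoses are orthogonal,
\[
  \bbE[\phi(\bW_t^1)\phi(\bW_t^2)] = \sum_{\ell\ge 0} t^{\ell}\,\bbE[\phi_\ell^2]\,.
\]
Every term is a nonnegative multiple of $t^\ell$, so this is a nondecreasing function of $t\in[0,1]$. Summing over multi-indices with the positive weight $N^{-2k}$ gives the claim; for $k=0$ the quantity is identically $1$, so there is nothing to prove.

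The only technical point is justifying the chaos expansion and the exchange of sums. Each $\phi$ is bounded by $1$, hence in $L^2$ of the Gaussian measure, and the multi-index sum is finite, so no difficulty arises. One could avoid chaos entirely by differentiating in $t$ via Gaussian integration by parts: the derivative is $\fr12\sum \bbE[\partial_{ij}\phi(\bW_t^1)\,\partial_{ij}\phi(\bW_t^2)]$, which has the same structure and is nonnegative by iterating the argument. I expect no real obstacle beyond this bookkeeping.
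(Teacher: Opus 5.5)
The paper gives no proof of this fact; it imports it from Chatterjee \cite[Theorem~3.1]{chatterjee2009disorder}. Your argument is a correct, self-contained proof, and it is the standard mechanism behind the cited result. You expand $\la R_{1,2}^{2k}\ra_t$ into products of Gibbs correlation functions. Each term then satisfies $\bbE[\phi(\bW_t^1)\phi(\bW_t^2)] = \bbE\bigl[(\bbE[\phi(\bW_t^1)\mid \bW^0])^2\bigr] = \sum_{\ell} t^\ell\,\bbE[\phi_\ell^2]$, a power series in $t$ with nonnegative coefficients; this even gives absolute monotonicity.

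Two small remarks on your derivative variant. First, no iteration is needed: each term $\bbE[\partial\phi(\bW_t^1)\,\partial\phi(\bW_t^2)]$ is already nonnegative, because it is a conditional square given $\bW^0$. Second, the prefactor $\fr12$ depends on how the Gaussian coordinates are normalized.
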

\begin{fac}[{\cite[Theorem~3.8]{chatterjee2009disorder}}]\label{f:chatterjee-var-id}
  We have $\Var(F_N) = \fr{N}{2} \int_0^1 \bbE \la R_{1,2}^2 \ra_t \,\de t$.
\end{fac}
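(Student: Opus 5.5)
The plan is to prove the identity by Gaussian interpolation between two copies of the disorder, which is the standard semigroup argument.

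\textbf{Interpolation.} Realize $\bW_t^1,\bW_t^2$ as in Definition~\ref{d:correlated-gibbs-average} and set $\varphi(t) = \bbE[F_N(\bW_t^1) F_N(\bW_t^2)]$. At $t=1$ both matrices equal $\bW^0$, so $\varphi(1) = \bbE[F_N^2]$. At $t=0$ they are independent, so $\varphi(0) = (\bbE F_N)^2$. Hence
\[
  \Var(F_N) = \int_0^1 \varphi'(t)\,\de t\,,
\]
and everything reduces to computing $\varphi'(t)$. Differentiability and exchange of derivative and expectation are harmless, since $F_N$ is smooth in $\bW$ with bounded gradient: each partial derivative is a Gibbs average of $\pm1$-valued quantities.

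\textbf{Computing $\varphi'(t)$.} Differentiating in $t$ produces terms linear in $\bW^0$, $\bW^1$, $\bW^2$ with coefficients $\fr{1}{2\sqrt t}$ and $-\fr{1}{2\sqrt{1-t}}$. Apply Gaussian integration by parts entrywise, writing $\sigma^2_{ij}$ for the variance of $W_{ij}$, so $\sigma^2_{ij}=1/N$ for $i<j$ and $\sigma^2_{ii}=2/N$.
\begin{itemize}
  \item The $\bW^1$ and $\bW^2$ terms give only second derivatives of a single copy of $F_N$.
  \item These are exactly cancelled by the diagonal-in-copy parts of the $\bW^0$ term.
  \item What remains is the standard formula
\[
  \varphi'(t) = \sum_{i\le j} \sigma_{ij}^2\, \bbE\Big[\partial_{W_{ij}} F_N(\bW_t^1)\, \partial_{W_{ij}} F_N(\bW_t^2)\Big]\,.
\]
\end{itemize}

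\textbf{Evaluating the gradient.} From \eqref{e:H}, $H_N(\bx) = \sum_{i<j} W_{ij}x_ix_j + \fr12\sum_i W_{ii}$. Hence $\partial_{W_{ij}} F_N = \la x_ix_j\ra$ for $i<j$, and $\partial_{W_{ii}} F_N = \fr12$. Substituting, and writing the product of the two Gibbs averages as one average $\la\cdot\ra_t$ over independent replicas $\bx^1,\bx^2$ from the two measures, gives
\[
  \varphi'(t) = \fr1N \sum_{i<j} \bbE\la x^1_ix^1_jx^2_ix^2_j\ra_t + N\cdot\fr2N\cdot\fr14\,.
\]
Now use
\[
  \sum_{i<j} x^1_ix^1_jx^2_ix^2_j = \fr12\Big[(\bx^1,\bx^2)^2 - N\Big] = \fr12\big(N^2R_{1,2}^2 - N\big)\,.
\]
This yields $\varphi'(t) = \fr N2\bbE\la R_{1,2}^2\ra_t - \fr12 + \fr12 = \fr N2\bbE\la R_{1,2}^2\ra_t$. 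Integrating over $t\in[0,1]$ gives the claim.

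The only step needing care is the integration-by-parts bookkeeping in $\varphi'(t)$, in particular checking that the self-interaction terms cancel exactly. This is routine: it is the identity $\fr{\de}{\de t}\bbE[f(X_t^1)f(X_t^2)] = \bbE[\nabla f(X_t^1)^\top\Sigma\nabla f(X_t^2)]$ for Gaussian vectors with correlation $t$.
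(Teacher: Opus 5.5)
Your proof is correct. It is the standard Gaussian interpolation argument behind the cited result; the paper gives no proof of its own and simply imports the identity from Chatterjee's Theorem~3.8. You also handle the diagonal entries correctly: their contribution $+\frac12$ exactly cancels the $-\frac12$ left by the $i=j$ terms missing from $\sum_{i<j}$, and this is why the identity carries no $1/N$ correction under the paper's convention $W_{ii}\sim\cN(0,2/N)$.
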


\begin{proof}[Proof of Theorem~\ref{t:main}]
  Part \ref{i:main-clt} is a direct consequence of Theorem~\ref{t:compare-cube-sphere} and the spherical model's CLT \eqref{e:spherical-critical-clt} (with $b=0$).
  It remains to prove part \ref{i:main-var}.

  By the upper bound in Proposition~\ref{p:cavity} and the estimate $\bbE \la R_{1,2}^4 \ra \lesssim N^{-4/3}$ from Proposition~\ref{p:overlap-ub-24mt}, there exists a universal constant $C$ such that for all $t\in [0,1]$,
  \[
    \bbE \la R_{1,2}^2 \ra_t \le \fr{1 + CN^{-1/3}}{N(1-t)}\,.
  \]
  Fact~\ref{f:correlated-overlap-incr} and the estimate $\bbE \la R_{1,2}^2 \ra \lesssim N^{-2/3}$ from Proposition~\ref{p:overlap-ub-24mt} imply that for all $t\in [0,1]$,
  \[
    \bbE \la R_{1,2}^2 \ra_t \le CN^{-2/3}\,,
  \]
  after possibly adjusting the universal constant $C$.
  Let $\delta = N^{-1/3}$.
  By Fact~\ref{f:chatterjee-var-id} and the last two displays,
  \balnn
    \Var(F_N)
    &\le \fr{N}{2} \int_0^{1-\delta} \fr{1 + CN^{-1/3}}{N(1-t)} \,\de t
    + \fr{N}{2} \int_{1-\delta}^1 CN^{-2/3} \,\de t \\
    &= \fr{1}{2} (1 + CN^{-1/3}) \log \fr{1}{\delta} + \fr{C}{2}
    = \fr{1}{6} \log N + O(1)\,.
  \ealnn
  This proves the desired upper bound on $\Var(F_N)$; we turn to the matching lower bound.
  By the lower bound in Proposition~\ref{p:cavity} and the estimates $\bbE \la R_{1,2}^2 \ra \lesssim N^{-2/3}$, $\bbE \la R_{1,2}^4 \ra \lesssim N^{-4/3}$ from Proposition~\ref{p:overlap-ub-24mt}, there exists a universal constant $C$ such that for all $t\in [0,1)$,
  \beq\label{e:correlated-overlap-quadratic-inequality}
    (1-t) \bbE \la R_{1,2}^2 \ra_t + CN^{-2/3} \sqrt{\bbE \la R_{1,2}^2 \ra_t} \ge \fr{1}{N} (1 - CN^{-1/3})\,.
  \eeq
  Write $a_2 = 1-t$, $a_1 = CN^{-2/3}$, $a_0 = \fr{1}{N} (1-CN^{-1/3})$.
  Since $a_2,a_1,a_0 > 0$, the quadratic equation
  \[
    a_2 x^2 + a_1 x = a_0
  \]
  has exactly one positive solution; denote this solution $\theta^*_t$.
  Then $a_2 (\theta^*_t)^2 \le a_0$, so
  \[
    \theta^*_t
    \le (a_0 / a_2)^{1/2}
    \le \lt(N(1-t)\rt)^{-1/2}\,.
  \]
  The quadratic inequality \eqref{e:correlated-overlap-quadratic-inequality} then implies
  \baln\label{e:theta-t-lb}
    (1-t) \bbE \la R_{1,2}^2 \ra_t
    \ge (1-t) (\theta^*_t)^2
    &= \fr{1}{N} (1 - CN^{-1/3}) - CN^{-2/3} \theta^*_t \\
    \nonumber
    &\ge \fr{1}{N} \lt(1 - CN^{-1/3} - \fr{C}{N^{1/6} (1-t)^{1/2}}\rt)\,.
  \ealn
  Recall $\delta = N^{-1/3}$.
  By Fact~\ref{f:chatterjee-var-id},
  \balnn
    \Var(F_N)
    &\ge \fr{N}{2} \int_0^{1-\delta} (\theta_t)^2\,\de t
    \ge \fr12 \int_0^{1-\delta} \lt(\fr{1 - CN^{-1/3}}{1-t} - \fr{C}{N^{1/6} (1-t)^{3/2}}\rt) \,\de t \\
    &= \fr12 (1 - CN^{-1/3}) \log \fr{1}{\delta}
    - \fr{C}{N^{1/6}} (\delta^{-1/2} - 1)
    = \fr16 \log N - O(1)\,. \qedhere
  \ealnn
\end{proof}

\subsection{Additional consequence: overlap lower bounds}
\label{ss:variance-consequences}

We finally prove Theorem~\ref{t:overlap}\ref{i:overlap-lb} and Corollary~\ref{c:sphere}\ref{i:sphere-overlap-lb}.

\begin{proof}[Proof of Theorem~\ref{t:overlap}\ref{i:overlap-lb}]
  Setting $t = 1 - 4C^2N^{-1/3}$ in \eqref{e:theta-t-lb} implies
  \[
    4C^2N^{-1/3} \bbE \la R_{1,2}^2 \ra_t \ge \fr{1}{2N} (1 - 2CN^{-1/3})\,.
  \]
  Together with Fact~\ref{f:correlated-overlap-incr} this yields
  \[
    \bbE \la R_{1,2}^2 \ra
    \ge \bbE \la R_{1,2}^2 \ra_t 
    \ge \fr{N^{-2/3}}{8C^2} (1 - 2CN^{-1/3})\,. \qedhere
  \]
\end{proof}

\begin{cor}\label{c:J-of-zero-ub}
  We have $J(0) \lesssim N^{-1/6}$.
\begin{proof}
  Theorem~\ref{t:overlap}\ref{i:overlap-lb} and the estimate on $\bbE \la R_{1,2}^2 \ra$ from Proposition~\ref{p:overlap-ub-24mt} imply
  \[
    N^{-2/3} 
    \lesssim \bbE \la R_{1,2}^2 \ra 
    \lesssim \max\lt((NJ(0)^2)^{-1}, N^{-2/3-\eps}\rt)\,.
  \]
  Since $N^{-2/3-\eps} \ll N^{-2/3}$, this implies $N^{-2/3} \lesssim (NJ(0)^2)^{-1}$, which rearranges to the result.
\end{proof}
\end{cor}

\begin{proof}[Proof of Corollary~\ref{c:sphere}\ref{i:sphere-overlap-lb}]
  We will argue that for a small universal constant $c > 0$,
  \beq\label{e:sphere-overlap-lb-goal}
    \bbE \la \bone\{|R_{1,2}| \le cN^{-1/3}\} \ra^\sph \le \fr12\,.
  \eeq
  This implies the conclusion, after adjusting $c$.
  Recall the density $\rho$ of $\bbE^\sph_q$ defined in \eqref{e:sphere-density}, and the function $\psi(q) = \rho(q) J(q)$ defined in \eqref{e:psi}.
  Abbreviate $\phi(q) = \log \psi(q)$ and $\delta = cN^{-1/3}$.
  By \eqref{e:reweighted-sph-2mt},
  \[
    \bbE \la \bone\{|R_{1,2}| \le cN^{-1/3}\} \ra^\sph
    = \bbE^\sph_q [\bone\{|q| \le \delta\} J(q)]
    = \int_{-\delta}^{\delta} \rho(q) J(q) \,\de q
    = \int_{-\delta}^{\delta} \psi(q) \,\de q\,.
  \]
  Recall $\rho(0) \asymp N^{1/2}$ by Stirling's formula.
  Corollary~\ref{c:J-of-zero-ub} implies that $\psi(0) \le CN^{1/3}$ for a universal constant $C$.
  From \eqref{e:phi}, for any $q\ge 0$
  \[
    \phi'(q) = - \fr{(N-3) q}{1-q^2} + Nq + f'(q)
    \le 3q\,.
  \]
  The last inequality uses that $f'(q) \le 0$ by Lemma~\ref{l:K-log-concave}.
  Therefore for any $q\in [0,\delta]$,
  \[
    \phi(q) - \phi(0) \le \int_0^q 3t\,\de t \le \fr32 \delta^2 \le \log 2\,,
  \]
  which implies
  \[
    \psi(q) \le 2\psi(0) \le 2CN^{1/3}\,.
  \]
  By evenness of $\psi$, the same estimate holds for all $|q| \le \delta$.
  Thus,
  \[
    \bbE \la \bone\{|R_{1,2}| \le cN^{-1/3}\} \ra^\sph
    \le \int_{-\delta}^{\delta} 2CN^{1/3} \,\de q
    = 4cC\,.
  \]
  Taking $c = \fr{1}{8C}$ proves \eqref{e:sphere-overlap-lb-goal}.
\end{proof}

\section{Exponential overlap moment and spherical free energy variance}
\label{s:conclusion}

In this section we prove Theorem~\ref{t:overlap}\ref{i:overlap-ub} and Corollary~\ref{c:sphere}\ref{i:sphere-var}.
These follow from results proved earlier in the paper and tail bounds on $F_N$ and $F^\sph_N$ proved in \S\ref{ss:tail-bounds}.

\subsection{Tail bounds for free energies}
\label{ss:tail-bounds}

The following tail bound is proved by essentially the same argument as Lemma~\ref{l:fe-lower-tail-crude}, but using the now-proved sharper variance estimate from Theorem~\ref{t:main} in place of Theorem~\ref{t:CL19}.
Throughout this section we set 
\[
  m = \fr{N}{4} - \fr{\log N}{12}\,.
\]
\begin{lem}\label{l:fe-lower-tail}
  For all $t\ge 0$,
  \[
    \bbP(F_N \le m - t) \le 2\exp\lt(-\fr{t^2}{\log N}\rt)\,.
  \]
\begin{proof}
  As observed in the proof of Lemma~\ref{l:fe-lower-tail-crude}, $F_N$ is a convex function of i.i.d. standard Gaussians, so Lemma~\ref{l:convex-gaussian-lower-tail} applies.
  Let $M$ be the median of $F_N$ and $\Delta = |M-m|$.
  As argued in the proof of Lemma~\ref{l:fe-lower-tail-crude},
  $\Delta = o(\sqrt{\log N})$.
  Lemma~\ref{l:convex-gaussian-lower-tail} and Theorem~\ref{t:main} imply
  \[
    \bbP(F_N \le m - t)
    \le \bbP(F_N \le M - (t - \Delta))
    \le \exp\lt(-\fr{(t - \Delta)_+^2}{2\Var(F_N)}\rt)
    \le \exp\lt(-\fr{(t - \Delta)_+^2}{\fr12 \log N}\rt)\,.
  \]
  If $t > \sqrt{\log 2 \cdot \log N}$, then $\Delta = o(\sqrt{\log N})$ implies $(t - \Delta)_+ \ge 2^{-1/2} t$, so
  \[
    \exp\lt(-\fr{(t - \Delta)_+^2}{\fr12 \log N}\rt)
    \le \exp\lt(-\fr{t^2}{\log N}\rt)\,.
  \]
  If $t \le \sqrt{\log 2 \cdot \log N}$, the result is trivial because
  \[
    2\exp\lt(-\fr{t^2}{\log N}\rt)
    \ge 1\,. \qedhere
  \]
\end{proof}
\end{lem}
\begin{lem}\label{l:fe-sph-lower-tail}
  For all $t\ge 0$,
  \[
    \bbP(F^\sph_N \le m - t) \le 4\exp\lt(-\min\lt(
      t, \fr{t^2}{4\log N}
    \rt)\rt)\,.
  \]
\begin{proof}
  Theorem~\ref{t:compare-cube-sphere} and \eqref{e:overview-goal-compare-step1} imply $\bbE[X_N^2] = 1 + O(N^{-1/3})$.
  By Markov's inequality,
  \[
    \bbP(F^\sph_N \le F_N - t)
    = \bbP(X_N^2 \ge e^{2t}) \le 2\exp(-2t)\,.
  \]
  Thus,
  \[
    \bbP(F^\sph_N \le m - t)
    \le \bbP(F^\sph_N \le F_N - t/2)
    + \bbP(F_N \le m - t/2)
    \le 2 \exp(-t) + 2\exp\lt(-\fr{t^2}{4\log N}\rt)\,. \qedhere
  \]
\end{proof}
\end{lem}
\begin{lem}\label{l:fe-upper-tail}
  For all $t\ge 0$,
  \[
    \max\Big(\bbP(F_N \ge m + t), \bbP(F^\sph_N \ge m + t)\Big) \le \exp\lt(-\lt(t - \fr{\log N}{12}\rt)_+\rt)\,.
  \]
\begin{proof}
  Immediate from Markov's inequality, as $\bbE[Z_N] = \bbE[Z^\sph_N] = e^{N/4} = e^{m + (\log N)/12}$.
\end{proof}
\end{lem}

\subsection{Exponential moment of overlap}

In this subsection we prove Theorem~\ref{t:overlap}\ref{i:overlap-ub}.
Propositions~\ref{p:J-of-zero} and \ref{p:reweight-annealed-overlap-ub} imply the existence of an absolute $c_0 > 0$ such that
\beq\label{e:reweighted-exp-moment}
  \bbE\lt[
    X_N^2 \la \exp(c_0 N^{1/3} |R_{1,2}|) \ra
  \rt] \le 2\,.
\eeq
We let $c>0$ be a small universal constant we will set later, and $\theta = c / c_0 \in (0,1)$.
\begin{dfn}\label{d:a-b-recursion}
  Let $b_0 = 0$, $a_0 = - \fr{\log 0.9}{\log N}$, and $b_1$ solve
  \[
    b_1 - \fr13 = -c\,.
  \]
  For $k\ge 1$, define $a_k$ and $b_{k+1}$ recursively by
  \balnn
    2a_k - (\theta^{-1} - 1)b_k &= -ck\,, &
    b_{k+1} - \fr{a_k}{2} + \fr{1}{12} = -c(k+1)\,.
  \ealnn
\end{dfn}
We defer the proof of the following lemma to the end of the subsection.
\begin{lem}\label{l:sequences}
  For sufficiently small $c>0$, the sequences $(a_k)_{k\ge 0}$ and $(b_k)_{k\ge 0}$ are increasing, with $a_1 \ge 2$ and $\lim_{k\rightarrow\infty} a_k = \lim_{k\rightarrow\infty} b_k = \infty$.
\end{lem}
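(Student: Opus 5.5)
The recursion in Definition~\ref{d:a-b-recursion} is linear with explicit coefficients, so I would eliminate $a_k$ and run an induction on $b_k$ alone. The claims $a_k,b_k\to\infty$ then follow from geometric growth.

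From the first relation, for $k\ge 1$,
\[
  a_k = 2r\, b_k - \fr{ck}{2}\,, \qquad r := \fr{\theta^{-1}-1}{4} = \fr{c_0/c - 1}{4}\,.
\]
Substituting into the second relation gives the scalar recursion
\[
  b_{k+1} = r\, b_k - d_k\,, \qquad d_k := \fr1{12} + \fr{c(5k+4)}{4}\,,
\]
with initial value $b_1 = \fr13 - c$. The key observation is that $r\to\infty$ as $c\to 0$, since $c_0$ is fixed. So for $c$ small the multiplicative part of the recursion dominates the additive error $d_k$, which grows only linearly in $k$ and has small slope.

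I would prove by induction that $b_k \ge 2^{k-3}$ and $b_{k+1} \ge 2 b_k$ for all $k\ge 1$.

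\textbf{Base case.} Take $c \le \fr1{12}$. Then $b_1 = \fr13 - c \ge \fr14 = 2^{-2}$.

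\textbf{Inductive step.} We have
\[
  b_{k+1} - 2b_k = (r-2)\,b_k - d_k \ge (r-2)\,2^{k-3} - \lt(\fr1{12} + \fr{9ck}{4}\rt)\,.
\]
For $c\le 1$ we have $d_k \le 1+k \le 2^k$. Hence the right-hand side is nonnegative as soon as $r \ge 10$, which holds for $c$ small relative to $c_0$. This gives $b_{k+1}\ge 2b_k$, and therefore $b_{k+1} \ge 2^{k-2}$.

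So $(b_k)_{k\ge1}$ is increasing and tends to $\infty$. Also $b_1 > 0 = b_0$ because $c<\fr13$, so $(b_k)_{k\ge 0}$ is increasing.

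For $(a_k)$, I would argue as follows.

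\textbf{Growth for $k\ge 1$.} We have
\[
  a_{k+1} - a_k = 2r\,(b_{k+1}-b_k) - \fr c2 \ge 2r\, b_k - \fr c2 \ge \fr r2 - \fr c2 > 0\,.
\]
Moreover $a_k \ge 2r\,2^{k-3} - \fr{ck}{2} \to \infty$.

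\textbf{The value $a_1$.} We have $a_1 = 2r\lt(\fr13 - c\rt) - \fr c2 \ge \fr r2 - 1 \ge 2$ once $r\ge 6$.

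\textbf{The step from $a_0$.} Here $a_0 = -\log 0.9/\log N \to 0$, so $a_0 < 1 < a_1$ for $N$ large. (If the statement is meant uniformly in $N$, note that $a_0 \le -\log 0.9/\log 2 < 1$ for all $N\ge 2$.)

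I expect no real obstacle here. The only point needing care is the order of quantifiers: $c$ must be chosen small depending only on the universal $c_0$, so that $r = (c_0/c-1)/4 \ge 10$ and $c\le \fr1{12}$. The inductive inequality $(r-2)2^{k-3} \ge d_k$ must then hold uniformly in $k$, and it does because $d_k$ grows only linearly while the left side grows exponentially.
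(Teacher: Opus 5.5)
Your proof is correct, and it takes a different route from the paper's.

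The paper does not solve the recursion. It asserts, as an ``elementary check,'' that for small $c$ the first few terms satisfy $a_1>2$, $a_2>a_1$, $b_2>b_1>0$, and that the initial second differences $a_3-2a_2+a_1$ and $b_3-2b_2+b_1$ are positive. It then observes that the forcing terms are affine in $k$, so second differencing kills them and gives the homogeneous recursion $a_{k+3}-2a_{k+2}+a_{k+1}=\frac{1-\theta}{4\theta}(a_{k+2}-2a_{k+1}+a_k)$ (and likewise for $b$). Positivity of second differences therefore propagates. Convexity plus a positive initial increment then yields monotonicity and divergence.

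You instead eliminate $a_k$ through $a_k=2rb_k-ck/2$ and reduce to the scalar affine recursion $b_{k+1}=rb_k-d_k$. You then prove the explicit geometric bounds $b_{k+1}\ge 2b_k$ and $b_k\ge 2^{k-3}$ by induction, and read off everything about $(a_k)$ from these.

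What each approach buys:
\begin{itemize}
\item Your route gives an explicit admissible threshold, $c\le\min(1/12,\,c_0/41)$, and an explicit growth rate.
\item It actually carries out the verification that the paper leaves to the reader.
\item It shows $a_0<a_1$ for every $N\ge 2$, not only for large $N$.
\item The paper's second-difference argument is shorter and makes clear why forcing that is linear in $k$ is harmless, at the cost of an unverified finite check.
\end{itemize}

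One small slip: the claim ``for $c\le 1$ we have $d_k\le 1+k$'' is false as stated. At $c=1$, $k=1$ one gets $d_1=\frac{1}{12}+\frac94>2$. Under your standing assumption $c\le\frac1{12}$, however, $d_k\le\frac1{12}+\frac{3k}{16}\le 1+k$, so the induction is unaffected.
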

We assume without further comment that $c$ is small enough that the conclusion of Lemma~\ref{l:sequences} holds.
Let $n$ be the smallest number such that $b_n \log N > cN^{1/3}$; this exists by Lemma~\ref{l:sequences}.
For $a,b \ge 0$ define
\balnn
  A(a,b) &= \bbE \lt[
    \bone\{X_N > N^{-a}\} \la \exp(cN^{1/3}|R_{1,2}|) \bone\{cN^{1/3}|R_{1,2}| \ge b \log N\} \ra
  \rt]\,, \\
  B(a,b) &= \bbE \lt[
    \bone\{X_N \le N^{-a}\} \la \exp(cN^{1/3}|R_{1,2}|) \bone\{cN^{1/3}|R_{1,2}| < b \log N\} \ra
  \rt]\,.
\ealnn
We emphasize that all these definitions depend implicitly on $c$, which we have not yet set, and on $N$.
The proof of Theorem~\ref{t:overlap}\ref{i:overlap-ub} is based on the following decomposition.
\begin{lem}\label{l:sequences-decomp}
  We have
  \[
    \bbE \la \exp(cN^{1/3} |R_{1,2}|) \ra \le \sum_{k=0}^{n-1} \lt(A(a_k,b_k) + B(a_k,b_{k+1}) \rt)\,.
  \]
\begin{proof}
  Since $(b_k)_{0\le k\le n}$ is increasing and $b_n \log N > cN^{1/3}$, for each realization of $(X_N, R_{1,2})$ there exists a unique $0\le k\le n-1$ such that $cN^{1/3} |R_{1,2}| \in [b_k \log N, b_{k+1}\log N)$.
  If $X_N \ge N^{-a_k}$, this point in the sample space is counted by $A(a_k,b_k)$, and if $X_N < N^{-a_k}$ it is counted by $B(a_k,b_{k+1})$.
\end{proof}
\end{lem}
The next several lemmas bound the right-hand side of Lemma~\ref{l:sequences-decomp}.
\begin{lem}\label{l:A0-term}
  For sufficiently small (but still universal) $c>0$, we have $A(a_0,b_0) \le 1.9$.
\begin{proof}
  Recall that $N^{-a_0} = 0.9$ and $b_0 = 0$.
  Thus,
  \beq\label{e:A-a0-b0}
    A(a_0,b_0) = \bbE \lt[
      \bone\{X_N > 0.9\} \la \exp(cN^{1/3}|R_{1,2}|) \ra
    \rt]
    \le (0.9)^{-2} \bbE \lt[
      X_N^2 \la \exp(cN^{1/3}|R_{1,2}|) \ra
    \rt]\,.
  \eeq
  Recall that Theorem~\ref{t:compare-cube-sphere} and \eqref{e:overview-goal-compare-step1} imply
  \[
    \bbE[X_N^2] \le 1 + O(N^{-1/3})\,,
  \]
  and that $\theta = c / c_0$.
  By H\"older's inequality,
  \[
    \bbE \lt[
      X_N^2 \la \exp(cN^{1/3}|R_{1,2}|) \ra
    \rt]
    \le \bbE \lt[
      X_N^2 \la \exp(c_0N^{1/3}|R_{1,2}|) \ra
    \rt]^\theta
    \bbE [X_N^2]^{1-\theta}
    \le (1 + O(N^{-1/3})) 2^\theta
    \le 1.5\,,
  \]
  provided $c$ is sufficiently small.
  Then the right-hand side of \eqref{e:A-a0-b0} is bounded by $(0.9)^{-2} \cdot 1.5 \le 1.9$.
\end{proof}
\end{lem}
\begin{lem}
  There exists an absolute constant $C>0$ such that for all $0\le k\le n-1$, we have $B(a_k,b_{k+1}) \le CN^{-c(k+1)}$.
\begin{proof}
  We estimate
  \[
    B(a_k,b_{k+1}) \le N^{b_{k+1}} \bbP(X_N \le N^{-a_k})\,.
  \]
  If $k=0$, then $N^{-a_k} = 0.9$.
  Theorem~\ref{t:compare-cube-sphere} implies $\bbP(X_N \le 0.9) \lesssim N^{-1/3}$, so
  \[
    B(a_0,b_1) \lesssim N^{b_1 - \fr13} = N^{-c}\,,
  \]
  as desired.
  If $k\ge 1$, then Lemmas~\ref{l:fe-lower-tail} and \ref{l:fe-upper-tail} imply
  \balnn
    \bbP(X_N \le N^{-a_k})
    &\le \bbP\lt(F_N \le m - \fr{a_k}{2} \log N\rt)
    + \bbP\lt(F^\sph_N \ge m + \fr{a_k}{2} \log N\rt) \\
    &\le 2\exp\lt(- \fr{a_k^2 \log N}{4}\rt) + \exp\lt(-\lt(\fr{a_k}{2} - \fr{1}{12}\rt)_+ \log N\rt) \\
    &\le 3\exp\lt(-\min\lt[\fr{a_k^2}{4}, \lt(\fr{a_k}{2} - \fr{1}{12}\rt)_+\rt] \log N \rt)\,.
  \ealnn
  By Lemma~\ref{l:sequences}, we have $a_k \ge a_1 \ge 2$, which implies
  \[
    \min\lt[\fr{a_k^2}{4}, \lt(\fr{a_k}{2} - \fr{1}{12}\rt)_+\rt]
    = \fr{a_k}{2} - \fr{1}{12}
    = b_{k+1} + c(k+1)\,.
  \]
  Thus
  \[
    B(a_k,b_{k+1}) \le N^{b_{k+1}} \cdot 3N^{-b_{k+1} - c(k+1)} = 3N^{-c(k+1)}\,. \qedhere
  \]
\end{proof}
\end{lem}
\begin{lem}\label{l:A-bd}
  For all $1\le k\le n-1$, we have $A(a_k,b_k) \le 2N^{-ck}$.
\begin{proof}
  We estimate
  \[
    A(a_k,b_k) \le N^{2a_k} \bbE \lt[
      X_N^2 \la \exp(cN^{1/3}|R_{1,2}|) \bone\{cN^{1/3}|R_{1,2}| \ge b_k \log N\} \ra
    \rt]\,.
  \]
  We can bound
  \balnn
    \bone\{cN^{1/3}|R_{1,2}| \ge b_k \log N\}
    &\le \exp\lt((c_0-c) N^{1/3}|R_{1,2}| - (\theta^{-1} - 1) b_k \log N\rt) \\
    &= N^{-(\theta^{-1} - 1) b_k} \exp\lt((c_0-c) N^{1/3}|R_{1,2}|\rt)\,,
  \ealnn
  so that \eqref{e:reweighted-exp-moment} implies
  \[
    A(a_k,b_k) 
    \le N^{2a_k - (\theta^{-1} - 1)b_k} \bbE \lt[
      X_N^2 \la \exp(c_0N^{1/3}|R_{1,2}|) \ra
    \rt]
    \le 2N^{2a_k - (\theta^{-1} - 1)b_k}
    = 2N^{-ck}\,. \qedhere
  \]
\end{proof}
\end{lem}

\begin{proof}[Proof of Theorem~\ref{t:overlap}\ref{i:overlap-ub}]
  Set $c>0$ small enough that Lemmas~\ref{l:sequences} and \ref{l:A0-term} hold.
  By Lemmas~\ref{l:sequences-decomp}--\ref{l:A-bd},
  \[
    \bbE \la \exp(cN^{1/3} |R_{1,2}|) \ra
    \le 1.9 + \sum_{k=1}^n CN^{-ck} + \sum_{k=1}^{n-1} 2N^{-ck}
    \le 2\,. \qedhere
  \]
\end{proof}

\begin{proof}[Proof of Lemma~\ref{l:sequences}]
  An elementary check shows that if we take $c$ small enough, then 
  \baln\label{e:sequences-init}
    \min(a_1-2,a_2-a_1,b_1,b_2-b_1) &> 0\,, \\
    \label{e:sequences-cvx-init}
    \min(a_3-2a_2+a_1,b_3-2b_2+b_1) &> 0\,.
  \ealn
  The recursions in Definition~\ref{d:a-b-recursion} imply that for all $k\ge 1$,
  \balnn
    a_{k+1} &= \fr{1}{2\theta} \lt[(1-\theta) \lt(\fr{a_k}{2} - \fr{1}{12}\rt) - c(k+1)\rt]\,, &
    b_{k+1} &= \fr{\theta^{-1}-1}{4} b_k - \fr{1}{12} - c\lt(\fr54 k + 1\rt)\,.
  \ealnn
  Taking two finite differences yields
  \balnn
    a_{k+3} - 2a_{k+2} + a_{k+1} &= \fr{1-\theta}{4\theta} (a_{k+2} - 2a_{k+1} + a_k)\,, \\
    b_{k+3} - 2b_{k+2} + b_{k+1} &= \fr{1-\theta}{4\theta} (b_{k+2} - 2b_{k+1} + b_k)\,,
  \ealnn
  for all $k\ge 1$.
  Together with \eqref{e:sequences-cvx-init} this implies that for all $k\ge 1$,
  \beq\label{e:sequences-cvx}
    \min(a_{k+2} - 2a_{k+1} + a_k, b_{k+2} - 2b_{k+1} + b_k) > 0\,.
  \eeq
  Combined with \eqref{e:sequences-init}, this implies $(a_k)_{k\ge 1}$ and $(b_k)_{k\ge 1}$ are positive-valued increasing sequences tending to $\infty$, where $a_1 \ge 2$.
  Since $a_0 = o_N(1)$ is smaller than $a_1$ for sufficiently large $N$, and $b_0 = 0$, the lemma's conclusion follows.
\end{proof}

\subsection{Variance of spherical model free energy}

Finally we prove Corollary~\ref{c:sphere}\ref{i:sphere-var}.
Define the event $\cE = \{|F_N - F^\sph_N| \le N^{-1/9}\}$.
Then Theorem~\ref{t:compare-cube-sphere} implies
\beq\label{e:cE-prob}
  \bbP(\cE^c) = \bbP\lt(X_N - 1 \not\in [\exp(-N^{-1/9}) - 1, \exp(N^{-1/9}) - 1]\rt) \lesssim N^{-1/9}\,.
\eeq
The proof of Corollary~\ref{c:sphere}\ref{i:sphere-var} will be based on the variance decompositions
\baln\label{e:FN-variance-decomp}
  \Var(F_N) &= \bbP(\cE) \Var(F_N | \cE) + \bbP(\cE^c) \Var(F_N | \cE^c) + \Var(\bbE[F_N | \bone\{\cE\}])\,, \\
  \label{e:FNsph-variance-decomp}
  \Var(F^\sph_N) &= \bbP(\cE) \Var(F^\sph_N | \cE) + \bbP(\cE^c) \Var(F^\sph_N | \cE^c) + \Var(\bbE[F^\sph_N | \bone\{\cE\}])\,.
\ealn
\begin{lem}\label{l:cEc-variance}
  We have 
  \[
    \max\lt(
      \bbP(\cE^c) \Var(F_N | \cE^c),
      \bbP(\cE^c) \Var(F^\sph_N | \cE^c)
    \rt) \lesssim N^{-1/10}\,.
  \]
\begin{proof}
  We give the proof for $F_N$, as the proof for $F^\sph_N$ is identical.
  Recall that for any random variable $X$ with finite second moment, $\Var(X) = \inf_{x\in \bbR} \bbE[(X-x)^2]$.
  By this fact and H\"older's inequality,
  \baln
    \bbP(\cE^c) \Var(F_N | \cE^c)
    &\le \bbP(\cE^c) \bbE[(F_N - m)^2 \,|\,\cE^c\,] \\
    &= \bbE\lt[\bone\{\cE^c\} (F_N - m)^2 \rt]
    \le \bbP(\cE^c)^{10/11} \bbE[(F_N - m)^{22}]^{1/11}\,.
  \ealn
  Then \eqref{e:cE-prob} gives 
  \[
    \bbP(\cE^c)^{10/11} \lesssim N^{-10/99}\,.
  \]
  The tail bounds from Lemmas~\ref{l:fe-lower-tail} and \ref{l:fe-upper-tail}, and a routine tail integration argument, imply
  \[
    \bbE[(F_N - m)^{22}]
    \le \polylog(N)\,.
  \]
  For the proof for $F^\sph_N$, the last estimate can be proved by replacing Lemma~\ref{l:fe-lower-tail} with Lemma~\ref{l:fe-sph-lower-tail}.
  The result follows for sufficiently large $N$ because $10/99 > 1/10$.
\end{proof}
\end{lem}
\begin{lem}\label{l:mean-variance-contribution}
  We have 
  \[
    \max\lt(
      \Var(\bbE[F_N | \bone\{\cE\}]),
      \Var(\bbE[F^\sph_N | \bone\{\cE\}])
    \rt) \lesssim N^{-1/10}\,.
  \]
\begin{proof}
  Again we only give the proof for $F_N$.
  Lemmas~\ref{l:fe-lower-tail} and \ref{l:fe-upper-tail} and a tail integration argument imply $\bbE[F_N]$ exists and
  \beq\label{e:EF-bound}
    |\bbE[F_N] - m| \lesssim \log N\,.
  \eeq
  Since
  \[
    \bbE[F_N] = \bbP(\cE) \bbE[F_N | \cE] + \bbP(\cE^c) \bbE[F_N | \cE^c]\,, 
  \]
  a routine calculation shows
  \[
    \Var(\bbE[F_N | \bone\{\cE\}])
    = \fr{\bbP(\cE^c)}{\bbP(\cE)} (\bbE[F_N | \cE^c] - \bbE[F_N])^2
    \lesssim \bbP(\cE^c) (\bbE[F_N | \cE^c] - \bbE[F_N])^2\,.
  \]
  We then estimate using Jensen's and H\"older's inequalities
  \balnn
    \bbP(\cE^c) (\bbE[F_N | \cE^c] - \bbE[F_N])^2
    &\le \bbP(\cE^c) \bbE\lt[
      (F_N - \bbE[F_N])^2 | \cE^c
    \rt] \\
    &= \bbE\lt[
      \bone\{\cE^c\} (F_N - \bbE[F_N])^2
    \rt]
    \le \bbP(\cE^c)^{10/11} \bbE[(F_N - \bbE[F_N])^{22}]^{1/11}\,.
  \ealnn
  The result follows similarly to Lemma~\ref{l:cEc-variance}, as Lemmas~\ref{l:fe-lower-tail} and \ref{l:fe-upper-tail}, the estimate \eqref{e:EF-bound}, and a tail integration argument imply
  \[
    \bbE[(F_N - \bbE[F_N])^{22}]^{1/11} \le \polylog(N)\,.
  \]
  For $F^\sph_N$, replace all invocations of Lemma~\ref{l:fe-lower-tail} with Lemma~\ref{l:fe-sph-lower-tail}.
\end{proof}
\end{lem}

\begin{proof}[Proof of Corollary~\ref{c:sphere}\ref{i:sphere-var}]
  We will show the stronger estimate
  \beq\label{e:sphere-var}
    \Var(F^\sph_N) = \Var(F_N) + O(N^{-1/10})\,.
  \eeq
  Plugging \eqref{e:cE-prob} and Lemmas~\ref{l:cEc-variance}--\ref{l:mean-variance-contribution} into the variance decomposition \eqref{e:FN-variance-decomp} yields
  \[
    \Var(F_N) = (1 - O(N^{-1/9})) \Var(F_N | \cE) + O(N^{-1/10})\,.
  \]
  Theorem~\ref{t:main}\ref{i:main-var}, this implies 
  \beq\label{e:var-FN-on-cE}
    \Var(F_N | \cE) = \Var(F_N) + O(N^{-1/10}) = \fr16 \log N + O(1)\,.
  \eeq
  On the event $\cE$, we have $|F_N - F^\sph_N| \le N^{-1/9}$.
  Write
  \[
    \Delta = (F_N - \bbE[F_N | \cE]) - (F^\sph_N - \bbE[F^\sph_N | \cE])\,,
  \]
  so that $|\Delta| \le 2N^{-1/9}$ on $\cE$.
  Thus, by Cauchy-Schwarz,
  \balnn
    \Var(F_N | \cE) &= \bbE\lt[
      (F_N - \bbE[F_N | \cE])^2 \,|\, \cE
    \rt] = \bbE\lt[
      (F^\sph_N - \bbE[F^\sph_N | \cE] + \Delta)^2 \,|\, \cE
    \rt] \\
    &\le (1 + N^{-1/9}) \bbE\lt[
      (F^\sph_N - \bbE[F^\sph_N | \cE])^2 \,|\, \cE
    \rt] + (1 + N^{1/9}) \bbE\lt[
      \Delta^2 \,|\, \cE
    \rt] \\
    &= (1 + N^{-1/9}) \Var(F^\sph_N | \cE) + O(N^{-1/9})\,.
  \ealnn
  Reversing the roles of $F_N$ and $F^\sph_N$ shows
  \[
    \Var(F^\sph_N | \cE) \le (1 + N^{-1/9}) \Var(F_N | \cE) + O(N^{-1/9})\,.
  \]
  Together with \eqref{e:var-FN-on-cE} this ensures
  \[
    \Var(F^\sph_N | \cE) = \Var(F_N) + O(N^{-1/10})\,.
  \]
  Plugging this estimate, \eqref{e:cE-prob}, and Lemmas~\ref{l:cEc-variance}--\ref{l:mean-variance-contribution} into the variance decomposition \eqref{e:FNsph-variance-decomp} yields \eqref{e:sphere-var}.
\end{proof}

\bibliographystyle{alpha}
\bibliography{bib}

@article{aspelmeier2008free,
  author={Aspelmeier, Timo},
  title={Free-energy fluctuations and chaos in the {S}herrington--{K}irkpatrick model},
  journal={Phys. Rev. Lett.}, 
  volume={100}, 
  pages={117205}, 
  year={2008}
}

@article{parisi2009phase,
  author={Parisi, Giorgio and Rizzo, Tommaso},
  title={Phase diagram and large deviations in the free-energy of mean-field spin-glasses},
  journal={Phys. Rev. B},
  volume={79},
  pages={134205}, 
  year={2009}
}

@article{chen2019order,
  author={Chen, Wei-Kuo and Lam, Wai-Kit},
  title={Order of fluctuations of the free energy in the {SK} model at critical temperature},
  journal={ALEA Lat. Am. J. Probab. Math. Stat.},
  volume={16},
  number={1},
  pages={809--816},
  year={2019}
}

@article{dey2026fluctuations,
  author={Dey, Partha S. and Kang, Taegu},
  title={Fluctuations for the {S}herrington--{K}irkpatrick spin glass model near the critical temperature},
  journal={arXiv preprint arXiv:2603.05636},
  year={2026},
}

@article{landon2022free,
  author={Landon, Benjamin},
  title={Free energy fluctuations of the two-spin spherical {SK} model at critical temperature},
  journal={J. Math. Phys.}, 
  volume={63},
  number={3},
  pages={Paper No. 033301},
  year={2022}
}

@article{johnstone2024spin,
  author={Johnstone, Iain M. and Klochkov, Yegor and Onatski, Alexei and Pavlyshyn, Damian},
  title={Spin glass to paramagnetic transition and triple point in spherical {SK} model},
  journal={J. Stat. Phys.},
  volume={191},
  number={99},
  year={2024}
}

@article{collins2025fluctuations,
  author={Collins{-}Woodfin, Elizabeth W. and Le, Han Gia},
  title={Free energy fluctuations of the bipartite spherical {SK} model at critical temperature},
  journal={Ann. Henri Poincar\'e},
  volume={26},
  pages={1087--1147},
  year={2025}
}

@article{collins2025order,
  author={Collins{-}Woodfin, Elizabeth W. and Le, Han Gia},
  title={Order of fluctuations of the free energy in the positive semi-definite {MSK} model at critical temperature},
  journal={arXiv preprint arXiv:2501.11732},
  year={2025}
}

@article{prodromidis2026distribution,
  author={Prodromidis, Kyprianos-Iason and Sly, Allan},
  title={Distribution of the magnetization of the critical {I}sing model on sparse random graphs},
  journal={arXiv preprint arXiv:2603.28702},
  year={2026}
}

@article{aizenman1987some,
  author       = {Aizenman, Michael and Lebowitz, Joel L. and Ruelle, David},
  title        = {Some rigorous results on the {S}herrington--{K}irkpatrick spin glass model},
  journal      = {Comm. Math. Phys.},
  volume       = {112},
  pages        = {3--20},
  year         = {1987},
}

@article{comets1995sherrington,
  author       = {Comets, Francis and Neveu, Jacques},
  title        = {The {S}herrington--{K}irkpatrick model of spin glasses and stochastic calculus: the high temperature case},
  journal      = {Comm. Math. Phys.},
  volume       = {166},
  pages        = {549--564},
  year         = {1995},
}

@book{talagrand2010mean,
  author       = {Talagrand, Michel},
  title        = {Mean Field Models for Spin Glasses: Volume I: Basic Examples},
  publisher    = {Springer},
  volume       = {54},
  year         = {2010},
}

@book{talagrand2011mean2,
  author       = {Talagrand, Michel},
  title        = {Mean Field Models for Spin Glasses. Volume II: Advanced Replica-Symmetry and Low Temperature},
  publisher    = {Springer},
  volume       = {55},
  year         = {2011},
}

@article{talagrand2006parisi,
  author       = {Talagrand, Michel},
  title        = {The {P}arisi formula},
  journal      = {Ann. Math.},
  volume       = {163},
  number       = {1},
  pages        = {221--263},
  year         = {2006},
}

@article{baik2016fluctuations,
  author       = {Baik, Jinho and Lee, Ji Oon},
  title        = {Fluctuations of the free energy of the spherical {S}herrington--{K}irkpatrick model},
  journal      = {J. Stat. Phys.},
  volume       = {165},
  pages        = {185--224},
  year         = {2016},
}

@book{chatterjee2014superconcentration,
  author       = {Chatterjee, Sourav},
  title        = {Superconcentration and Related Topics},
  publisher    = {Springer},
  volume       = {15},
  year         = {2014},
}

@article{chatterjee2009disorder,
  author       = {Chatterjee, Sourav},
  title        = {Disorder chaos and multiple valleys in spin glasses},
  journal      = {arXiv preprint arXiv:0907.3381},
  year         = {2009}
}

@article{sherrington1975solvable,
  author       = {Sherrington, David and Kirkpatrick, Scott},
  title        = {Solvable model of a spin-glass},
  journal      = {Phys. Rev. Lett.},
  volume       = {35},
  number       = {26},
  pages        = {1792},
  year         = {1975},
}

@article{parisi1979infinite,
  author       = {Parisi, Giorgio},
  title        = {Infinite number of order parameters for spin-glasses},
  journal      = {Phys. Rev. Lett.},
  volume       = {43},
  number       = {23},
  pages        = {1754},
  year         = {1979},
}

@article{parisi1983order,
  author       = {Parisi, Giorgio},
  title        = {Order parameter for spin-glasses},
  journal      = {Phys. Rev. Lett.},
  volume       = {50},
  number       = {24},
  pages        = {1946},
  year         = {1983},
}

@article{panchenko2013parisi,
  author       = {Panchenko, Dmitry},
  title        = {The {P}arisi ultrametricity conjecture},
  journal      = {Ann. Math.},
  volume       = {177},
  number       = {1},
  pages        = {383--393},
  year         = {2013},
}

@book{mezard1987spin,
  author       = {M{\'e}zard, Marc and Parisi, Giorgio and Virasoro, Miguel A.},
  title        = {Spin Glass Theory and Beyond: An Introduction to the Replica Method and Its Applications},
  publisher    = {World Scientific},
  volume       = {9},
  year         = {1987},
}

@article{ruelle1987mathematical,
  author       = {Ruelle, David},
  title        = {A mathematical reformulation of {D}errida's {REM} and {GREM}},
  journal      = {Comm. Math. Phys.},
  volume       = {108},
  number       = {2},
  pages        = {225--239},
  year         = {1987},
}

@article{ghirlanda1998general,
  title={General properties of overlap probability distributions in disordered spin systems. {T}owards {P}arisi ultrametricity},
  author={Ghirlanda, Stefano and Guerra, Francesco},
  journal={J. Phys. A},
  volume={31},
  number={46},
  pages={9149--9155},
  year={1998}
}

@article{aizenman2003extended,
  author       = {Aizenman, Michael and Sims, Robert and Starr, Shannon L.},
  title        = {Extended variational principle for the {S}herrington--{K}irkpatrick spin-glass model},
  journal      = {Phys. Rev. B},
  volume       = {68},
  number       = {21},
  pages        = {214403},
  year         = {2003},
}

@article{guerra2003broken,
  author       = {Guerra, Francesco},
  title        = {Broken replica symmetry bounds in the mean field spin glass model},
  journal      = {Comm. Math. Phys.},
  volume       = {233},
  number       = {1},
  pages        = {1--12},
  year         = {2003},
}

@article{talagrand2006spherical,
  author       = {Talagrand, Michel},
  title        = {Free energy of the spherical mean field model},
  journal      = {Probab. Theory Rel. Fields},
  volume       = {134},
  pages        = {339--382},
  year         = {2006},
}

@article{dey2021fluctuation,
  title={Fluctuation results for multi-species {S}herrington--{K}irkpatrick model in the replica symmetric regime},
  author={Dey, Partha S. and Wu, Qiang},
  journal={J. Stat. Phys.},
  volume={185},
  number={3},
  pages={22},
  year={2021}
}

@article{baik2020free,
  title={Free energy of bipartite spherical {S}herrington--{K}irkpatrick model},
  author={Baik, Jinho and Lee, Ji Oon},
  journal={Ann. Inst. Henri Poincar\'e Probab. Stat.},
  volume={56},
  number={4},
  pages={2897--2934},
  year={2020}
}

@article{robinson1992almost,
  title={Almost all cubic graphs are {H}amiltonian},
  author={Robinson, Robert W. and Wormald, Nicholas C.},
  journal={Rand. Struct. Alg.},
  volume={3},
  number={2},
  pages={117--125},
  year={1992}
}

@article{robinson1994almost,
  title={Almost all regular graphs are {H}amiltonian},
  author={Robinson, Robert W. and Wormald, Nicholas C.},
  journal={Rand. Struct. Alg.},
  volume={5},
  number={2},
  pages={363--374},
  year={1994}
}

@article{mossel2009hardness,
  title={On the hardness of sampling independent sets beyond the tree threshold},
  author={Mossel, Elchanan and Weitz, Dror and Wormald, Nicholas},
  journal={Probab. Theory Rel. Fields},
  volume={143},
  number={3},
  pages={401--439},
  year={2009}
}

@article{kemkes2010chromatic,
  title={On the chromatic number of random $d$-regular graphs},
  author={Kemkes, Graeme and P{\'e}rez-Gim{\'e}nez, Xavier and Wormald, Nicholas},
  journal={Adv. Math.},
  volume={223},
  number={1},
  pages={300--328},
  year={2010}
}

@inproceedings{abbe2022proof,
  title={Proof of the contiguity conjecture and lognormal limit for the symmetric perceptron},
  author={Abbe, Emmanuel and Li, Shuangping and Sly, Allan},
  booktitle={Proc. 62nd FOCS},
  pages={327--338},
  year={2022}
}

@article{bencs2025zeros,
  title={On zeros and algorithms for disordered systems: mean-field spin glasses},
  author={Bencs, Ferenc and Huang, Brice and Lee, Daniel Z. and Liu, Kuikui and Regts, Guus},
  journal={arXiv preprint arXiv:2507.15616},
  year={2025}
}

@article{cooper1996perfect,
  title={Perfect matchings in random $r$-regular, $s$-uniform hypergraphs},
  author={Cooper, Colin and Frieze, Alan and Molloy, Michael and Reed, Bruce},
  journal={Combin. Probab. Comp.},
  volume={5},
  number={1},
  pages={1--14},
  year={1996}
}

@article{coja2026fluctuations,
  title={Fluctuations of the {I}sing free energy on {E}rd{\H{o}}s--{R}\'{e}nyi graphs},
  author={Coja{-}Oghlan, Amin and Kaaser, Dominik and Rolvien, Maurice and Zakharov, Pavel and Zampetakis, Kostas},
  journal={arXiv preprint arXiv:2601.08590},
  year={2026}
}

@article{fabian2021ising,
  title={The {I}sing antiferromagnet in the replica symmetric phase},
  author={Fabian, Christian and Loick, Philipp},
  journal={arXiv preprint arXiv:2103.09775},
  year={2021}
}

@article{chen2023gaussian,
  title={A gaussian convexity for logarithmic moment generating functions with applications in spin glasses},
  author={Chen, Wei-Kuo},
  journal={arXiv preprint arXiv:2311.08351},
  year={2023}
}

@article{baik2005phase,
  author  = {Baik, Jinho and Ben\space{}Arous, G{\'e}rard and P{\'e}ch{\'e}, Sandrine},
  title   = {Phase transition of the largest eigenvalue for nonnull complex sample covariance matrices},
  journal = {Ann. Probab.},
  volume  = {33},
  number  = {5},
  pages   = {1643--1697},
  year    = {2005}
}

@article{peche2006largest,
  author  = {P{\'e}ch{\'e}, Sandrine},
  title   = {The largest eigenvalue of small rank perturbations of {H}ermitian random matrices},
  journal = {Probab. Theory Rel. Fields},
  volume  = {134},
  pages   = {127--173},
  year    = {2006}
}

@article{bloemendal2013limits,
  author  = {Bloemendal, Alex and Vir{\'a}g, B{\'a}lint},
  title   = {Limits of spiked random matrices {I}},
  journal = {Probab. Theory Rel. Fields},
  volume  = {156},
  pages   = {795--825},
  year    = {2013}
}

@article{bloemendal2016limits,
  author  = {Bloemendal, Alex and Vir{\'a}g, B{\'a}lint},
  title   = {Limits of spiked random matrices {II}},
  journal = {Ann. Probab.},
  volume  = {44},
  number  = {4},
  pages   = {2726--2769},
  year    = {2016}
}

@article{ErdosYauYin2012Rigidity,
  author  = {Erd{\H{o}}s, L{\'a}szl{\'o} and Yau, Horng-Tzer and Yin, Jun},
  title   = {Rigidity of Eigenvalues of Generalized {W}igner Matrices},
  journal = {Adv. Math.},
  volume   = {229},
  number   = {3},
  pages    = {1435--1515},
  year     = {2012}
}

@article{schertzer2026order,
  title = {The order of free energy fluctuations in the critical {S}herrington--{K}irkpatrick model revisited},
  author = {Schertzer, Adrien},
  journal = {arXiv preprint arXiv:2606.21360},
  year = {2026}
}

@book{anderson2010introduction,
  title={An Introduction to Random Matrices},
  author={Anderson, Greg W. and Guionnet, Alice and Zeitouni, Ofer},
  number={118},
  year={2010},
  publisher={Cambridge university press}
}

@article{landon2022fluctuations,
  title={Fluctuations of the overlap at low temperature in the $2$-spin spherical {SK} model},
  author={Landon, Benjamin and Sosoe, Philippe},
  journal={Ann. Inst. Henri Poincar\'e Probab. Stat.},
  volume={58},
  number={3},
  pages={1426--1459},
  year={2022}
}

@book{FK94,
  author    = {Jacques Faraut and Adam Kor{\'a}nyi},
  title     = {Analysis on Symmetric Cones},
  series    = {Oxford Mathematical Monographs},
  publisher = {Clarendon Press},
  address   = {Oxford},
  year      = {1994},
  isbn      = {9780198534778}
}

@book{widder1946laplace,
  author    = {David Vernon Widder},
  title     = {The Laplace Transform},
  series    = {Princeton Mathematical Series},
  volume    = {6},
  publisher = {Princeton University Press},
  address   = {Princeton, NJ},
  year      = {1946}
}

\end{document}